\documentclass[11pt]{amsart}%
\usepackage{amsfonts,amssymb,graphicx}
\usepackage{epsfig,amsmath,latexsym}
\usepackage{amscd, amssymb, amsthm, amsmath,eucal, verbatim}
\usepackage{epstopdf}
\usepackage{amsmath}
\usepackage{amsfonts}
\usepackage{amssymb}
\usepackage{graphicx}%
\setcounter{MaxMatrixCols}{30}
\providecommand{\U}[1]{\protect\rule{.1in}{.1in}}
\DeclareGraphicsRule{.tif}{png}{.png}{`convert #1 `basename #1 .tif`.png}
\DeclareGraphicsRule{.tif}{png}{.png}{`convert #1 `dirname #1`/`basename #1 .tif`.png}
\newtheorem{theorem}{Theorem}[section]

\newtheorem{claim}[theorem]{Claim}

\newtheorem{corollary}[theorem]{Corollary}

\newtheorem{definition}[theorem]{Definition}
\newtheorem{example}[theorem]{Example}

\newtheorem{lemma}[theorem]{Lemma}

\newtheorem{proposition}[theorem]{Proposition}
\newtheorem{remark}[theorem]{Remark}

\begin{document}
\title{Simplicial Energy and Simplicial Harmonic Maps}
\author{Joel Hass}
\address{Department of Mathematics, University of California, Davis, California 95616}
\email{hass@math.ucdavis.edu}
\author{Peter Scott}
\address{Department of Mathematics, University of Michigan, Ann Arbor, Michigan 48109}
\email{pscott@umich.edu}
\thanks{Partially supported by NSF grants}
\subjclass{Primary 57M25}
\keywords{Harmonic map, energy, area, minimal surface}
\date{\today }

\begin{abstract}
We introduce a combinatorial energy for maps of triangulated surfaces with
simplicial metrics and analyze the existence and uniqueness properties of the
corresponding harmonic maps. We show that some important applications of
smooth harmonic maps can be obtained in this setting.

\end{abstract}
\maketitle

\section{Introduction}

\label{intro}

The energy of a map from one manifold to another is a measure of the total
stretching of the map. Energy minimizing harmonic maps have found numerous
applications in geometry, analysis, algebra and topology. The two survey
papers of Eells and Lemaire \cite{EellsLemaire2, EellsLemaire3} give an
introduction to the extensive literature of this subject.

One of the fundamental existence theorems states that a nontrivial map of a
Riemannian manifold $F$ to a negatively curved manifold $M$ is homotopic to a
unique harmonic map. When $F$ is a surface, each conformal class of metrics on
$F$ gives rise to the same map, and smooth families of domains give rise to
smooth families of harmonic maps \cite{EellsSampson, Hartman}.

In this paper we introduce a new type of energy that leads to what we call a
\emph{simplicial harmonic map}. The key idea is to give a new, more
combinatorial, definition of area, and then find a corresponding definition
for energy. The payoff is that the existence and regularity of simplicial
harmonic maps are simple to prove, but they retain enough of the features of
smooth harmonic maps to be useful in applications. In this paper we focus on
maps from surfaces into non-positively curved manifolds and spaces. We briefly
discuss extensions to maps between manifolds of any dimension.

One motivation for studying energy from a more combinatorial point of view is
to obtain numerical methods for computing minimal and harmonic surfaces. This
was carried out in the work of Pinkall and Polthier
\cite{PinkallPolthier:1993}. See \cite{Polthier:2002} for a survey of related
work. The discrete harmonic maps of Pinkall and Polthier are maps of a
triangulation of a domain surface into ${\mathbb{R}}^{n}$ that are linear on
each $2$--simplex. Their formula gives the natural energy for a
piecewise-linear approximation of a smooth map, and has found many uses in
computational geometry \cite{Desbrun:1999}, where harmonic maps give a
preferred choice of surface map for purposes such as texturing and meshing.
While the Pinkall-Polthier harmonic maps retain many of the useful features of
smooth harmonic maps, particularly when the $2$--simplices in the domain are
acute angled, they do not in general satisfy the convex hull or mean value
properties. (See section~\ref{meanvalueandconvexhullproperties} for a
discussion of these properties and their consequences.) An example of Polthier
and Rossman \cite{PolthierRossman:2002} shows that a discrete harmonic map may
fail to satisfy the convex hull property, with an interactive demonstration
available at \cite{PolthierRossman:2000}.

The simplicial harmonic maps we describe in this paper are defined for very
general target spaces and do satisfy a convex hull property. A consequence is
that if $f$ is a simplicial harmonic immersion of a surface $F$ into a
Riemannian manifold $M$, then the induced curvature on $F$ is more negative
than the sectional curvature of $M$. When $M$ itself is negatively curved, the
Gauss-Bonnet theorem then implies that the area of $f$ is bounded above by a
constant times the Euler characteristic of $F$. When mapped into Euclidean
space, simplicial harmonic maps also satisfy a mean value property and the
maximum principle.

In this paper, we use the completeness and non-positive curvature of a
Riemannian manifold $M$ to conclude that each homotopy class of arcs in $M$
contains a unique geodesic representative (rel boundary). There are other
conditions that imply this property, and our construction then immediately
applies. For example, Teichmuller space with either the Teichmuller metric or
the Weil-Peterson metric has a unique geodesic connecting any two points
\cite{Wolpert:87}, although the Weil-Peterson metric is not complete.
Similarly, any two points in the interior of a hemisphere are connected by a
unique geodesic arc. More generally, our methods work equally well if $M$ is a
metrized graph, a $CAT(0)$ space, or a path metric space with the property
that any two points are joined by a unique geodesic in each homotopy class.

This paper is organized as follows. Initially we consider maps of triangulated
compact surfaces into manifolds. In Section \ref{simplicialarea} we define the
simplicial area of a map, and in Section \ref{simplicialenergy} we define
simplicial energy. In Section \ref{polygonaldecompositions} we briefly discuss
how to define simplicial area and energy of maps when the source surface is
divided into polygons which need not be triangles. In Section
\ref{simplicialharmonic} we define simplicial harmonic maps. In Section
\ref{meanvalueandconvexhullproperties} we establish the mean value and convex
hull properties of simplicial harmonic maps.

In Section~\ref{families} we consider the problem of deforming families of
surfaces in $3$--manifolds to surfaces of small area. Given any surface in a
$3$--manifold, it is possible to find a homotopy that deforms the surface to a
collection of minimal surfaces joined by thin tubes. In a hyperbolic
$3$--manifold, there is a universal upper bound for the area of such a surface
in terms of its Euler characteristic $\chi$; a surface with $\chi<0$ can be
homotoped to have area at most $2\pi\left\vert \chi\right\vert $. But there is
no canonical homotopy to such a small area surface. For many applications one
would like to start with a family of surfaces and continuously homotope the
entire family of surfaces so that each surface in the family has small area.
Area deformation techniques based on mean curvature and related flows are not
well suited to such a process, due to the formation of singularities and the
non-uniqueness of least area surfaces in a homotopy class. In
\cite{HassThompsonThurston} it was shown that smooth energy deformation
techniques based on analytic results of Eells and Sampson \cite{EellsSampson}
can be applied successfully in this context. We show that a simplified theory
based on simplicial energy suffices to give a deformation of a family of
surfaces to a family of small area surfaces. Then we extend this discussion to
explain how the smooth energy deformation techniques in
\cite{HassThompsonThurston} can be replaced by simpler simplicial techniques.

In Section \ref{genus} we introduce the idea of the genus of an $n$%
--dimensional manifold, extending the notion of Heegaard genus from dimension
three to all dimensions. We find connections between this $n$--dimensional
genus and the areas of surfaces in a sweepout of an $n$--manifold.

The simplicial energy of a map depends on the choice of a simplicial metric on
its domain. In Section \ref{limits} we show the existence of maps which are
global minimizers of simplicial energy over the space of all simplicial
metrics. In Section ~\ref{diffeo} we consider the problem of finding canonical
parametrizations, or canonical triangulations of a surface. We solve this by
showing that if a simplicial harmonic map from a closed surface of negative
Euler characteristic to a non-positively curved Riemannian surface is
homotopic to a homeomorphism, then it is itself a homeomorphism. Then we give
applications of this result to study the space of straight triangulations of a
compact surface $F$ with a Riemannian metric of non-positive curvature. (A
triangulation of $F$ is called straight if each edge is a geodesic arc.) We
show that the space of straight triangulations of $F$ of a fixed combinatorial
type is connected. We note that in the case when $F$ is the disk with a flat
metric, Bloch, Connelly and Henderson \cite{BlochConnellyHenderson} proved
much more, showing that this space is homeomorphic to some Euclidean space and
so contractible. We thank Igor Rivin for telling us about this work, and for
suggesting that our ideas could be useful for studying the space of straight
triangulations of more general surfaces. Finally, in Section
\ref{higherdimensions} we briefly discuss generalizations to maps of higher
dimensional manifolds.

\section{Simplicial area for surfaces}

\label{simplicialarea}

In this section we define the simplicial area of a compact $2$--dimensional
surface $F$. In place of a Riemannian metric on $F$, we specify a
triangulation $\tau$ and a map $l$ that assigns to each edge $e_{i}$, $1\leq
i\leq r$, of $\tau$ a length $l_{i}=l(e_{i})>0$, with the lengths $l_{i}$
satisfying the triangle inequality for each triangle of $\tau$. We call such
an assignment a \emph{simplicial metric} on $F$, and denote it by $(F,\tau
,l)$, or just $l$ when the context is clear. Note that we allow the
possibility that for some triangles one of the triangle inequalities is an
equality. We do not require the triangulation $\tau$ to be combinatorial. Thus
we allow triangulations where two, or even all three, vertices of a single
triangle coincide in $F$, or where a pair of triangles intersect in two or
three vertices. But we assume that $\tau$ has the structure of a $\Delta
$--complex \cite{Hatcher}. This is not a serious restriction. This means that
$F$ is constructed from a disjoint union of triangles, each of whose vertices
is ordered, by identifying faces of the triangles in an order preserving way.
Thus the given orderings can be extended to an ordering of all the vertices of
$\tau$. For simplicity we will always discuss triangles in $\Delta$--complexes
as if they were embedded in $F$. If a triangle is not embedded in $F$, one
needs to consider the source triangle in the initial disjoint union of
triangles. Each simplex in a $\Delta$--complex has a minimal (in the given
order) vertex, which we call its preferred vertex. When considering limits of
simplicial metrics, it is convenient to also allow edges in the triangulation
that are assigned zero length. We call such an extension a \emph{simplicial
quasi-metric} on $F$.

We wish to assign a meaning to the area of $(F,\tau,l)$. A natural choice for
the area of a triangle $T$ with edge lengths $a,b,c$ is the area of the
Euclidean triangle with the same edge lengths. This is given by the classical
Heron's formula, see \cite{Heron} for example,
\[
A(T)=\sqrt{\frac{(a+b+c)(a+b-c)(b+c-a)(a+c-b)}{16}}.
\]
We will however use an alternate formula, one that is much simpler, but still
useful. We define the \emph{simplicial area} $A_{S}$ of the triangle $T$ by
the formula%
\[
A_{S}(T)=ab+bc+ca.
\]

We note that the Euclidean area $A$ of a triangle is at most $xy/2$, for any
pair $x$, $y$ of edge lengths. So $A\leq ab/2$, $A\leq bc/2$ and $A\leq ca/2$.
Equality occurs when an appropriate angle is a right angle. Adding these
inequalities yields the inequality%
\[
A<A_{S}/6.
\]

One defect of our definition of the simplicial area of a triangle, is that if
we subdivide a triangle into smaller triangles, the simplicial area is not, in
general, additive. There is however a special case where additivity holds.
This is when we add a vertex at the midpoint of each edge of a Euclidean
triangle $T,$ and subdivide $T$ into four similar triangles. We call this
\textit{conformal subdivision}.

Another defect of our definition is that the simplicial area of a triangle can
only be zero when $a=b=c=0$. For some purposes, this is not a problem. In the
cases where this is a serious problem, we use quadrilaterals rather than
triangles. See section \ref{polygonaldecompositions}.

\begin{definition}
\label{defnofsimplicialarea}The \emph{simplicial area} of $(F,\tau,l)$ is
\[
A_{S}(F,\tau,l)=\sum_{T\in\tau}A_{S}(T)=\sum_{T\in\tau}l_{i}l_{j}+l_{j}%
l_{k}+l_{k}l_{i}%
\]
where $l_{i},l_{j},l_{k}$ are the lengths of the edges of the $2$--simplex $T$
of $\tau$.
\end{definition}

Unlike Heron's formula, this formula makes sense even when the triangle
inequality does not hold. However the triangle inequality will prove useful,
so we have built it into the definition of a simplicial metric. We will see
that simplicial area retains enough similarities to the standard notion of
area to be useful in many applications.

We now use this idea to define a simplicial analogue of the area of a map. Let
$f:F\rightarrow M$ be a piecewise-smooth map of a metrized triangulated
compact surface $(F,\tau,l)$ to a Riemannian manifold $M$ and let $L_{i}$
denote the length of $f|e_{i}$. Of course, this can only be defined when
$f|e_{i}$ is rectifiable.

\begin{definition}
The simplicial area of a map $f:(F,\tau,l)\rightarrow M$ is
\[
A_{S}(f)=\sum_{T\in\tau}L_{i}L_{j}+L_{j}L_{k}+L_{k}L_{i},
\]
where the sum is taken over all $2$--simplices of $\tau$.
\end{definition}

The simplicial area of $f$ does not depend on the simplicial metric $l$, but
it does depend on the triangulation of $F$.

\section{Simplicial energy}

\label{simplicialenergy}

The energy of a smooth map $f: (F,g) \to(M,h)$ of a Riemannian surface $(F,g)$
to a Riemannian manifold $(M,h)$ is
\[
E(f) = \frac{1}{2} \int\int_{F} |\nabla f |^{2} ~ dA.
\]

In local coordinates the energy can be obtained by integrating the $2$--form
\[
\frac{1}{2}g^{ij}h_{\alpha\beta}\frac{\partial f^{\alpha}}{\partial x^{i}%
}\frac{\partial f^{\beta}}{\partial x^{j}}dx_{i}dx_{j}.
\]
If we take $e_{1},e_{2}$ to be an orthonormal frame in a neighborhood on
$(F,g)$, then this expression simplifies to
\[
\frac{1}{2}(||f_{\ast}(e_{1})||^{2}+||f_{\ast}(e_{2})||^{2})dx_{1}dx_{2}.
\]

We now define a corresponding notion of simplicial energy. Suppose that $f$ is
a map from a metrized or quasi-metrized triangulated compact surface $F$ to a
Riemannian manifold $M$. If an edge $e_{i}$ has $l_{i}>0$ then we denote the
ratio $L_{i}/l_{i}$ by $\sigma_{i}$, and call it the \emph{stretch factor} of
the edge $e_{i}$ under $f$. If $e_{i}$ has length $l_{i}=0$ then we define the
stretch factor by $\sigma_{i}=0$ if $L_{i}=0$, and $\sigma_{i}=\infty$ if
$L_{i}>0$.

By analogy with the definition of smooth energy, we define the formula for the
simplicial energy of a map on a triangle by multiplying the squares of stretch
factors by the local contribution to the area.

\begin{definition}
\label{defnofsimplicialenergy}The \emph{simplicial energy} of a
piecewise-smooth map $f$ of a metrized triangulated compact surface
$(F,\tau,l)$ to a Riemannian manifold $M$ is
\[
E_{S}(f)=\frac{1}{2}\sum_{T\in\tau}(\sigma_{i}^{2}+\sigma_{j}^{2})l_{i}%
l_{j}+(\sigma_{j}^{2}+\sigma_{k}^{2})l_{j}l_{k}+(\sigma_{k}^{2}+\sigma_{i}%
^{2})l_{k}l_{i},
\]
where $l_{i},l_{j},l_{k}$ are the edge lengths of the $2$--simplex $T$ of
$\tau$.
\end{definition}

In the case where some edge $e_{i}$ has length $l_{i}=0$ in a quasi-metric and
this edge is not mapped by $f$ to a point, so that $L_{i}\neq0$ and
$\sigma_{i}=\infty$, we define the simplicial energy of $f$ to be infinite.
For all other cases the above formula gives a well defined finite energy.

\begin{remark}
The simplicial energy of $f$ is invariant under scale change of the domain
simplicial metric. If the simplicial metric $l$ is replaced by $\lambda l$,
resulting in multiplying each $l_{i}$ by a constant $\lambda>0$, then the
energy is unchanged.
\end{remark}

It is sometimes convenient to rewrite the simplicial energy as a sum over the
edges of $\tau$. For a finite energy map $f:(F,\tau,l)\rightarrow M$, this gives%

\begin{equation}
E_{S}(f)=\frac{1}{2}\sum_{e_{i}\in\tau}\left(  \frac{l_{i_{1}}+l_{i_{2}%
}+l_{i_{3}}+l_{i_{4}}}{l_{i}}\right)  ~L_{i}^{2} \label{edgesum}%
\end{equation}

where the sum is over the edges $\{e_{i}\}$ of $\tau$, and $l_{i_{1}}$,
$l_{i_{2}}$, $l_{i_{3}}$ and $l_{i_{4}}$ are the lengths of the four edges of
$\tau$ which are adjacent to $e_{i}$ in the two triangles of which $e_{i}$ is
a face. See Figure~\ref{triangle.fig}.

\begin{figure}[ptbh]
\includegraphics[width=2in]{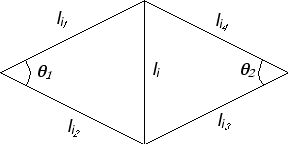}\caption{Edge lengths adjacent to
$e_{i}$ }%
\label{triangle.fig}%
\end{figure}

The coefficient $w_{i}=(l_{i_{1}}+l_{i_{2}}+l_{i_{3}}+l_{i_{4}})/2l_{i}$ of
$L_{i}^{2}$ is called the \emph{weight} of $e_{i}$. Note that for any
simplicial metric on $(F,\tau)$, each weight $w_{i}$ is positive. Other
approaches to combinatorial harmonic maps have studied a variety of ways of
assigning such weights \cite{PinkallPolthier:1993, DaskalopoulosMese,
IzekiNayatani, Wang:98}. For example the combinatorial energy of Pinkall and
Polthier \cite{PinkallPolthier:1993} assigns to the edge $e_{i}$ the weight
$w_{i}=(\cot\theta_{1}+\cot\theta_{2})/4$, where $\theta_{1}$ and $\theta_{2}$
are the two Euclidean angles opposite the edge $e_{i}$. See
Figure~\ref{triangle.fig}. The cotangent formula goes back at least to Duffin
\cite{Duffin}. Note that some of these weights will be negative if any of the
triangles of $\tau$ has an obtuse angle. Our choice of weights was made
because of its simplicity and its close connection to the previously defined
simplicial area. For our purposes, it also turns out to be important that all
our weights are positive. In fact, many of the results in this paper hold for
any energy functional of the form $\sum w_{i}L_{i}^{2}$, so long as all the
weights are positive, though the connection between simplicial energy and area
does not.

For our next result, it will be convenient to introduce the following
notation. Let $F$ be a surface with triangulation $\tau$, and let $l$ be a
simplicial quasi-metric on $(F,\tau)$. Then we denote by $\tau_{0}$ the
subcomplex of $\tau$ which consists of all zero length edges.

\begin{lemma}
\label{E>A} Let $f:F\rightarrow M$ be a piecewise-smooth map of a connected
triangulated compact surface $(F,\tau)$ to a Riemannian manifold $M$. Let $l$
be a simplicial quasi-metric on $(F,\tau)$. Then
\[
E_{S}(f)\geq A_{S}(f).
\]

Further equality holds if and only the stretch factors $\sigma_{i}$ are
constant on the edges of each component of $\tau-\tau_{0}$. In particular, if
$l$ is a simplicial metric, so that no $l_{i}$ is zero, then equality holds if
and only if all stretch factors $\sigma_{i}$ take the same value on all edges
of $\tau$.
\end{lemma}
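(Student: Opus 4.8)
The plan is to prove the inequality not just triangle by triangle but term by term, using only the elementary estimate $\frac{1}{2}(s^{2}+t^{2})\ge st$. First I would dispose of the degenerate case: if some edge $e_{i}$ has $l_{i}=0$ while $L_{i}>0$, then by definition $E_{S}(f)=\infty$, whereas $A_{S}(f)$ is a finite sum of products of finite numbers (each $f|e_{i}$ is rectifiable of finite length since $f$ is piecewise smooth on a compact surface), so the inequality is strict and nothing further is needed. Hence I may assume $f$ has finite simplicial energy, in which case $L_{i}=\sigma_{i}l_{i}$ for \emph{every} edge — including edges with $l_{i}=0$, where finiteness of $E_{S}(f)$ forces $L_{i}=0$ and $\sigma_{i}=0$, so both sides are $0$.

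Next, fix a $2$--simplex $T$ with edges $e_{i},e_{j},e_{k}$. Its contribution to $A_{S}(f)$ is $L_{i}L_{j}+L_{j}L_{k}+L_{k}L_{i}=\sigma_{i}\sigma_{j}\,l_{i}l_{j}+\sigma_{j}\sigma_{k}\,l_{j}l_{k}+\sigma_{k}\sigma_{i}\,l_{k}l_{i}$, and its contribution to $E_{S}(f)$ is $\frac{1}{2}(\sigma_{i}^{2}+\sigma_{j}^{2})l_{i}l_{j}+\frac{1}{2}(\sigma_{j}^{2}+\sigma_{k}^{2})l_{j}l_{k}+\frac{1}{2}(\sigma_{k}^{2}+\sigma_{i}^{2})l_{k}l_{i}$. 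Subtracting, the difference of these two contributions is
\[
\tfrac{1}{2}(\sigma_{i}-\sigma_{j})^{2}l_{i}l_{j}+\tfrac{1}{2}(\sigma_{j}-\sigma_{k})^{2}l_{j}l_{k}+\tfrac{1}{2}(\sigma_{k}-\sigma_{i})^{2}l_{k}l_{i}\ \ge\ 0 ,
\]
since every $l_{\bullet}$ is nonnegative. Summing over all triangles of $\tau$ gives $E_{S}(f)-A_{S}(f)\ge 0$, as desired.

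For the equality statement, $E_{S}(f)=A_{S}(f)$ holds if and only if every one of the nonnegative summands above vanishes, i.e. $(\sigma_{i}-\sigma_{j})^{2}l_{i}l_{j}=0$ for each pair of edges $e_{i},e_{j}$ of a common triangle; equivalently, $\sigma_{i}=\sigma_{j}$ whenever $e_{i}$ and $e_{j}$ are positive-length edges lying in a common triangle. It then remains to recognize this condition as asking that $\sigma$ be constant on the edge set of each connected component of $\tau-\tau_{0}$. One direction is immediate. For the other, if $e$ and $e'$ are positive-length edges in the same component $C$ of $\tau-\tau_{0}$, I would join an interior point of $e$ to an interior point of $e'$ by a path in $C$, put it in general position so that it meets the $1$--skeleton only at interiors of positive-length edges, and read off from it a chain $e=e^{(0)},e^{(1)},\dots,e^{(m)}=e'$ of positive-length edges in which consecutive edges share a triangle; applying the pairwise equality along the chain yields $\sigma_{e}=\sigma_{e'}$. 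The final sentence of the lemma is the special case $\tau_{0}=\emptyset$, in which $\tau-\tau_{0}=\tau=F$ is connected by hypothesis.

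I expect the only real subtlety to be this last combinatorial step — propagating ``equal $\sigma$ on edges of a common triangle'' to ``equal $\sigma$ on each component of $\tau-\tau_{0}$'' — where one must be a little careful about how a path crosses the $1$--skeleton in a $\Delta$--complex and about components that contain no positive-length edge at all (for which the asserted condition is vacuous). The inequality itself, and the characterization of when it is sharp, reduce entirely to the termwise $\frac{1}{2}(s^{2}+t^{2})\ge st$ and its equality case $s=t$.
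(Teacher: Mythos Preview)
Your proof is correct and follows essentially the same route as the paper's: the termwise inequality $\tfrac{1}{2}(\sigma_i^2+\sigma_j^2)l_il_j\ge \sigma_i\sigma_j l_il_j=L_iL_j$ (which you package neatly as the sum-of-squares identity), a brief check of the degenerate $l_i=0$ cases, and then a connectivity argument for the equality statement. The only cosmetic difference is in that last step: the paper invokes connectedness of the $1$--skeleton (when $\tau_0=\emptyset$) and of the dual $1$--skeleton (in general), whereas you spell out the same idea by threading a transverse path through $\tau-\tau_0$ and reading off a chain of positive-length edges in which consecutive members share a triangle; these are two phrasings of the same propagation argument.
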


\begin{remark}
This result is the combinatorial analogue of the fact that if $F\ $and $M$
have Riemannian metrics and $f:F\rightarrow M$ is a smooth map, then the
energy of $f$ is always at least as large as its area, with equality if and
only if $f$ is conformal. The condition that the stretch factors $\sigma_{i}$
take the same value on all edges of $\tau$ is a much weaker condition than
conformality, but this may be an advantage for some purposes.
\end{remark}

\begin{proof}
Suppose that edges $e_{i}$ and $e_{j}$ lie on the boundary of a triangle in
$\tau$. If $l_{i}$ and $l_{j}$ are positive, we apply the inequality
$a^{2}+b^{2}\geq2ab$, with equality precisely when $a=b$. We obtain
\begin{equation}
\frac{1}{2}(\sigma_{i}^{2}+\sigma_{j}^{2})l_{i}l_{j}\geq\sigma_{i}\sigma
_{j}l_{i}l_{j}=\frac{L_{i}}{l_{i}}\frac{L_{j}}{l_{j}}l_{i}l_{j}=L_{i}L_{j},
\label{ab}%
\end{equation}
with equality precisely when $\sigma_{i}=\sigma_{j}$. If some $l_{i}$ is zero,
we have two cases. If $l_{i}=0$, and $L_{i}>0$, then $\sigma_{i}$ is infinite,
so that $\frac{1}{2}(\sigma_{i}^{2}+\sigma_{j}^{2})l_{i}l_{j}\geq L_{i}L_{j}$.
If $\sigma_{j}$ is finite, $l_{i}=0$, and $L_{i}=0$, then $\sigma_{i}=0$ and
the same inequality holds as both sides are zero. Thus this inequality holds
in all cases. Summing over all triangles of $F$ shows that $E_{S}(f)\geq
A_{S}(f)$.

If $E_{S}(f)=A_{S}(f)$, and $\tau_{0}$ is empty, the connectedness of the
$1$--skeleton of $\tau$ immediately implies that all stretch factors
$\sigma_{i}$ take the same value on all edges of $\tau$. If $\tau_{0}$ is
non-empty, the connectedness of the $1$--skeleton of the dual cellulation of
$\tau$ implies that all the stretch factors $\sigma_{i}$ are constant on the
edges of each component of $\tau-\tau_{0}$.
\end{proof}

We now give an example that shows some of the difficulties in achieving useful
simplicial versions of area and energy. The example illustrates that it is not
possible to capture all aspects of how a map stretches a surface using only
information about the stretching of edges in a fixed triangulation.

\begin{example}
\label{Example.stretch} Measuring stretching along the boundary of a triangle
fails to capture Dirichlet energy.
\end{example}

Let $f:T_{1}\rightarrow T_{2}$ be a simplicial map from an isosceles triangle
with edges of length $1,1,2-\epsilon$ to an equilateral triangle with edge
lengths all equal to one. The Euclidean area of $T_{1}$ is close to zero,
while that of $T_{2}$ is $\sqrt{3}/4$. The affine map $f$ that takes $T_{1}$
to $T_{2}$ stretches each of these edges by a factor of $1$ or less. If we use
a combinatorial definition of area that makes the area of $T_{1}$ close to
zero, and a combinatorial definition of the energy of $f$ which depends only
on how much $f$ stretches the edges of $T_{1}$, then the energy of $f$ would
also be close to zero. Thus the energy of $f$ would be much less than its
area, which is the area of the image triangle $T_{2}$, unlike the situation of
Lemma \ref{E>A}. The problem is that measuring the stretching of $f$ only on
the boundary of $T_{1}$ ignores the large stretching by $f$ of segments in
$T_{1}$ orthogonal to the long edge.

We conclude that reasonable formulas defining simplicial area and energy
cannot simultaneously satisfy all of the following:

\begin{enumerate}
\item Simplicial energy is evaluated using stretching of edges and simplicial area.

\item Simplicial energy is greater or equal to simplicial area.

\item The simplicial area of a triangle contained in a line is zero.
\end{enumerate}

As we pointed out in section \ref{simplicialarea} our definition of simplicial
area only has the third property when every edge of a triangle has zero
length. In section \ref{families}, we will see how using quadrilaterals
instead of triangles removes some of these difficulties.

\section{Polygonal Decompositions\label{polygonaldecompositions}}

In this section, we generalise the preceding two sections to the situation
where a compact surface $F$ is divided into polygons rather than just
triangles. We again use $\tau$ to denote such a division. We again have a map
$l$ that assigns to each edge $e_{i}$, $1\leq i\leq r$, of $\tau$ a length
$l_{i}=l(e_{i})>0$. For each polygon of $\tau$, we require that each edge has
length no more than the sum of all the other edges of that polygon. This
suffices to ensure the existence of a convex embedding of the polygon in the
Euclidean plane. If the polygon has more than three edges, we cannot expect
this embedding to be unique, but this is not important. Note that we allow the
possibility that for some polygons one of these inequalities is an equality.
We call such an assignment a \emph{simplicial metric} on $F$, and denote it by
$(F,\tau,l)$, or just $l$ when the context is clear. As in the previous
sections, we do not require that each polygon of $\tau$ be embedded in $F$. We
will also extend this to the case where some edges of $\tau$ are assigned zero
length. We call such an extension a \emph{simplicial quasi-metric} on $F$.

Throughout this section, $F$ will be a compact surface with a polygonal
decomposition $\tau$ and a simplicial metric or quasi-metric $l$.

Let $P\ $be a $n$--gon, whose edges are $e_{1},e_{2},\ldots,e_{n}$ with
lengths $l_{1},l_{2},\ldots,l_{n}$ reading round $P$. Thus $l_{i}$ and
$l_{i+1}$ are lengths of adjacent edges, where we define $l_{n+1}$ to equal
$l_{1}$. We define the \emph{simplicial area} $A_{S}$ of this polygon by the
formula%
\[
A_{S}(P)=\sum_{i=1}^{n}l_{i}l_{i+1}.
\]

As in the case of triangles, the simplicial area of polygons is not, in
general, additive. However for quadrilaterals there is a natural analogue of
the conformal subdivision of a triangle which we described near the start of
section \ref{simplicialarea}. But note that it may be that the process we
describe does not correspond to subdividing a quadrilateral embedded in the
plane. Given a quadrilateral $P$, there is a natural way to divide it into
four sub quadrilaterals, by cutting along two lines which join interior points
of opposite edges of $P$. We now assign lengths to the new edges as shown in
Figure~\ref{quads}.

\begin{figure}[ptbh]
\centering
\includegraphics[width=2in]{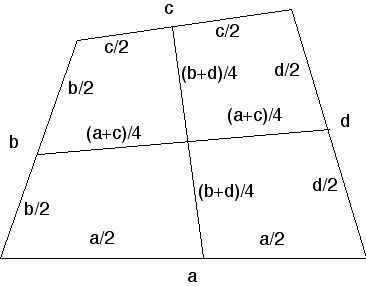} \caption{Conformal subdivision
preserves area.}%
\label{quads}%
\end{figure}

As for triangles, the area of a polygon can be non-zero even if several edges
have length zero. However the following fact will be very useful. If $P$ is a
quadrilateral, and two opposite edges of $P$ are assigned zero length, then
the simplicial area of $P$ is zero. For the applications in sections
\ref{families} and \ref{genus}, this is all we will need.

\begin{definition}
The \emph{simplicial area} of $(F,\tau,l)$ is
\[
A_{S}(F,\tau,l)=\sum_{P\in\tau}A_{S}(P).
\]

\end{definition}

Let $f:F\rightarrow M$ be a piecewise-smooth map to a Riemannian manifold $M$
and let $L_{i}$ denote the length of $f|e_{i}$. Of course, this can only be
defined when $f|e_{i}$ is rectifiable.

\begin{definition}
For a $n$--gon $P$ of $\tau$, the \emph{simplicial area} of a map
$f:P\rightarrow M$ is $\sum_{i=1}^{n}L_{i}L_{i+1}$, which we denote by
$A_{S}(f,P)$.

The \emph{simplicial area} of a map $f:(F,\tau,l)\rightarrow M$ is
\[
A_{S}(f)=\sum_{P\in\tau}A_{S}(f,P).
\]

\end{definition}

Next we define the simplicial energy of $f$. As for triangles, we use the
stretch factor $\sigma_{i}=L_{i}/l_{i}$, when $l_{i}>0$. If $l_{i}=0$ we
define the stretch factor by $\sigma_{i}=0$ if $L_{i}=0$, and $\sigma
_{i}=\infty$ if $L_{i}>0$.

\begin{definition}
For a $n$--gon $P$ of $\tau$, the \emph{simplicial energy} of a map
$f:P\rightarrow M$ is $E_{S}(f,P)=\sum_{i=1}^{n}(\sigma_{i}^{2}+\sigma
_{i+1}^{2})l_{i}l_{i+1}$, where $\sigma_{n+1}$ equals $\sigma_{1}$.

The \emph{simplicial energy} of a piecewise-smooth map $f$ of $(F,\tau,l)$ to
a Riemannian manifold $M$ is
\[
E_{S}(f)=\frac{1}{2}\sum_{P\in\tau}E_{S}(f,P).
\]

\end{definition}

As for triangles, the above formula gives a well defined finite energy, unless
some $\sigma_{i}=\infty$, when $E_{S}(f)=\infty$.

\begin{remark}
The simplicial energy of $f$ is invariant under scale change of the domain
simplicial metric. If the simplicial metric $l$ is replaced by $\lambda l$,
resulting in multiplying each $l_{i}$ by a constant $\lambda>0$, then the
energy is unchanged.
\end{remark}

It is sometimes convenient to rewrite the simplicial energy as a sum over the
edges of $\tau$. For a finite energy map $f:(F,\tau,l)\rightarrow M$, this gives%

\begin{equation}
E_{S}(f)=\frac{1}{2}\sum_{e_{i}\in\tau}\left(  \frac{l_{i_{1}}+l_{i_{2}%
}+l_{i_{3}}+l_{i_{4}}}{l_{i}}\right)  ~L_{i}^{2}%
\end{equation}
where the sum is over the edges $\{e_{i}\}$ of $\tau$, and $l_{i_{1}}$,
$l_{i_{2}}$, $l_{i_{3}}$ and $l_{i_{4}}$ are the lengths of the four edges of
$\tau$ which are adjacent to $e_{i}$ in the two polygons of which $e_{i}$ is a
face. See Figure~\ref{triangle.fig} for the case when the polygons are triangles.

If $l$ is a simplicial quasi-metric on $(F,\tau)$, then we denote by $\tau
_{0}$ the subcomplex of $\tau$ which consists of all zero length edges. The
following result is the natural generalisation of Lemma \ref{E>A}, and is
proved in the same way.

\begin{lemma}
\label{E>Aforpolygons}Let $f:(F,\tau,l)\rightarrow M$ be a piecewise-smooth
map of $F$ to a Riemannian manifold $M$. Then
\[
E_{S}(f)\geq A_{S}(f).
\]

Further equality holds if and only the stretch factors $\sigma_{i}$ are
constant on the edges of each component of $\tau-\tau_{0}$. In particular, if
$l$ is a simplicial metric, so that no $l_{i}$ is zero, then equality holds if
and only if all stretch factors $\sigma_{i}$ take the same value on all edges
of $\tau$.
\end{lemma}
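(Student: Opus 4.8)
The plan is to follow the proof of Lemma~\ref{E>A} line for line, with the three pairs of edges of a triangle replaced by the $n$ cyclically consecutive pairs of edges of an $n$--gon.

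First I would prove the inequality term by term. Fix a polygon $P$ of $\tau$ with edges $e_1,\dots,e_n$ read cyclically, and adopt the convention $e_{n+1}=e_1$. For each $i$, if $l_i$ and $l_{i+1}$ are both positive then the elementary inequality $a^{2}+b^{2}\ge 2ab$ applied with $a=\sigma_i$ and $b=\sigma_{i+1}$ gives
\[
\tfrac{1}{2}(\sigma_i^{2}+\sigma_{i+1}^{2})\,l_il_{i+1}\ \ge\ \sigma_i\sigma_{i+1}\,l_il_{i+1}\ =\ \frac{L_i}{l_i}\,\frac{L_{i+1}}{l_{i+1}}\,l_il_{i+1}\ =\ L_iL_{i+1},
\]
with equality precisely when $\sigma_i=\sigma_{i+1}$, exactly as in~(\ref{ab}). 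If one of $l_i,l_{i+1}$ is zero, the argument of Lemma~\ref{E>A} applies verbatim: if $l_i=0$ and $L_i>0$ then $\sigma_i=\infty$ and the left side is $+\infty$, while if $l_i=0$ and $L_i=0$ then $\sigma_i=0$ and both sides vanish. Hence $\tfrac{1}{2}(\sigma_i^{2}+\sigma_{i+1}^{2})l_il_{i+1}\ge L_iL_{i+1}$ in every case. Summing over $i$ gives $E_S(f,P)\ge 2A_S(f,P)$, and summing over all polygons of $\tau$ and dividing by $2$ gives $E_S(f)\ge A_S(f)$.

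For the equality statement, suppose $E_S(f)=A_S(f)$; since $A_S(f)$ is finite this forces $E_S(f)<\infty$, so $L_i=0$ whenever $l_i=0$, and every one of the term inequalities above is an equality. Thus $\sigma_i=\sigma_{i+1}$ whenever $e_i$ and $e_{i+1}$ are cyclically consecutive positive-length edges of a common polygon. From here I would argue exactly as in Lemma~\ref{E>A}: using the connectedness of the $1$--skeleton of $\tau$ when $\tau_0$ is empty, and of the $1$--skeleton of the dual cellulation of $\tau$ when $\tau_0$ is non-empty, these local equalities propagate to show that $\sigma$ is constant on the edges of each component of $\tau-\tau_0$. (When $\tau_0$ is empty, for instance when $l$ is a genuine simplicial metric, this says $\sigma$ takes a single value on all edges of $\tau$.) The converse is immediate: if $\sigma$ is constant on each component of $\tau-\tau_0$, then for each polygon and each cyclically consecutive pair $e_i,e_{i+1}$, either both edges are positive-length and lie in a common component of $\tau-\tau_0$ (so $\sigma_i=\sigma_{i+1}$ and the $i$-th term is an equality), or one of them has zero length (so the $i$-th term vanishes on both sides, using $L_i=0$ when $l_i=0$); summing gives $E_S(f)=A_S(f)$.

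I expect the only genuinely delicate point, and the one place the dual cellulation is needed rather than an elementary connectivity argument, is the propagation of stretch-factor equality in the presence of zero-length edges. For $n$--gons with $n>3$ this requires slightly more care than for triangles, since two positive-length edges of the same polygon need no longer be cyclically consecutive: one must chase the equality around the cyclic edge-sequence of each polygon, passing to a neighbouring polygon across a shared positive-length edge whenever a zero-length edge threatens to break the chain, which is exactly the content of the dual $1$--skeleton connectivity argument already used in Lemma~\ref{E>A}. I would therefore keep the write-up short and explicitly refer back to that proof.
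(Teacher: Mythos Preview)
Your proposal is correct and matches the paper's own treatment exactly: the paper does not give a separate proof of Lemma~\ref{E>Aforpolygons} but simply states that it ``is proved in the same way'' as Lemma~\ref{E>A}, which is precisely what you do. Your observation about the extra care needed when chasing stretch-factor equalities around an $n$--gon with zero-length edges is apt and goes slightly beyond what the paper spells out, but it does not change the argument.
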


\section{Simplicial harmonic maps}

\label{simplicialharmonic}

We call a map of a surface $F$ into a manifold $M$ \emph{trivial} if the image
of $\pi_{1}(F)$ is contained in a cyclic subgroup of $\pi_{1}(M)$. When $M$ is
non-positively curved, or more generally when $\pi_{2}(M)=0$, this is
equivalent to the map factoring through a map of the circle into $M$.

In this section we show that any homotopy class of maps from a metrized
triangulated closed surface $(F,\tau,l)$ into a closed non-positively curved
Riemannian manifold $M$ contains a simplicial-energy minimizing map. This map
is unique if the map is nontrivial and the curvature of $M$ is strictly
negative. Note that nontrivial maps never exist if $F$ is the sphere or
projective plane. Further if the metric on $M$ is strictly negatively curved,
then nontrivial maps do not exist if $F$ is the torus or Klein bottle.

Since $M$ is geodesically complete, any map $f_{0}:F\rightarrow M$ can be
homotoped, fixing the vertices of $F$, to a new map that sends each edge of
$\tau$ to a geodesic arc in $M$. If $M$ is non-positively curved, such an arc
is unique. After picking an identification of each edge $e_{i}$ of $\tau$ with
the interval $[0,l_{i}]$ of the real line, this yields a canonical map from
the $1$--skeleton of $\tau$ to $M$. The resulting map $f_{1}$ has energy no
greater than the original map $f_{0}$, and it suffices to consider such maps
in searching for an energy minimizer. The choice of an extension of $f_{1}$ to
the $2$--simplices of $F$ does not affect its energy, so we are free to make
this choice in any convenient way. To uniquely describe an extension, we make
some further choices. Since the edge lengths of $\tau$ satisfy the triangle
inequality, we can identify each triangle of $\tau$ with the Euclidean
triangle having the same edge lengths. As discussed at the start of
Section~\ref{simplicialarea}, each triangle of $\tau$ has a preferred vertex,
and we fix the natural cone structure on the triangle with this vertex as cone
point. There is a unique extension of $f_{1}$ to a map of $F$ that sends each
cone line in a triangle to a constant speed geodesic arc in $M$.

If $\tau$ is a polygonal decomposition of $F$, we proceed in the same way. The
only difference is that we need to choose a preferred vertex for each polygon,
choose a convex embedding of each polygon in the Euclidean plane, and then use
the natural cone structure on the polygon with this vertex as cone point.

We make the following definition.

\begin{definition}
\label{defnofsimplicialmap}A map $f:(F,\tau,l)\rightarrow M$ from a
triangulated surface $F$ with a simplicial metric $l$ to a Riemannian manifold
$M$ is \emph{simplicial} if each edge of $\tau$ and each cone line in each
$2$--simplex of $\tau$ is mapped as a geodesic arc.
\end{definition}

We can use the same definition and terminology if $\tau$ is a polygonal
decomposition of $F$.

The preceding discussion shows that if $M$ is non-positively curved, then any
map $f_{0}:F\rightarrow M$ can be homotoped, leaving the vertices fixed, to a
unique simplicial map $f_{1}$ and that the simplicial energy of $f_{1}$ is no
greater than that of $f_{0}$. Thus to establish the existence of
simplicial-energy minimizing maps into a non-positively curved manifold, we
need only consider simplicial maps. Given a simplicial map $f$ we define a
\emph{variation} of $f$ to be a $1$--parameter family of maps $f_{t}%
,~-\epsilon<t<\epsilon$ with $f_{0}=f$. Note that $f_{t}$ need not be
simplicial when $t\neq0$.

\begin{definition}
A map $f:(F,\tau,l)\rightarrow M$ of a closed surface with a simplicial metric
or simplicial quasi-metric to a Riemannian manifold $M$ is \emph{simplicial
harmonic} if it is simplicial and is a critical point of simplicial energy
under all variations $f_{t}$ of $f$.

If $F$ is compact with non-empty boundary, then $f$ is \emph{simplicial
harmonic} if it is simplicial and is a critical point of simplicial energy
under all variations $f_{t}$ of $f$ which leave $\partial F$ fixed.

The map $f$ is \emph{simplicial conformal} if the stretch factors $\sigma_{i}$
are constant on the edges of each component of $\tau-\tau_{0}$, where
$\tau_{0}$ denotes the subcomplex of $\tau$ which consists of all zero length edges.
\end{definition}

\begin{remark}
Lemmas \ref{E>A} and \ref{E>Aforpolygons} tell us that if $f$ is a simplicial
map, then $E_{S}(f)=A_{S}(f)$ if and only if $f$ is simplicial conformal.
\end{remark}

\begin{remark}
The above definition of $f$ being simplicial harmonic is equivalent to the one
obtained by restricting attention to variations in which each map $f_{t}$ is simplicial.
\end{remark}

For most of this paper, we will consider only the case when the surface $F$ is
closed, but in the next section it will be important to consider the case when
$F$ has non-empty boundary.

\begin{proposition}
\label{uniqueharmonicmapsexist} Let $(F,\tau,l)$ be a closed triangulated
surface with a simplicial metric. Let $f:(F,\tau,l)\rightarrow M$ be a
continuous map of $F$ to a non-positively curved, closed Riemannian manifold
$M$. Then

\begin{enumerate}
\item $f$ is homotopic to a simplicial map $f_{0}$ that minimizes simplicial
energy among all maps homotopic to $f$, and hence is simplicial harmonic.

\item If $f$ is nontrivial and $M$ is negatively curved, then $f_{0}$ is the
unique simplicial harmonic map in the homotopy class of $f$.
\end{enumerate}

Furthermore, if $l$ is a simplicial quasi-metric and $f$ is homotopic to a map
with finite energy, then (1) and (2) continue to hold.
\end{proposition}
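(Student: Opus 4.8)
The plan is to first reduce to the case of simplicial maps, then prove existence via a compactness argument on the finite-dimensional parameter space of vertex positions, and finally prove uniqueness using the convexity of the energy along geodesic homotopies in a negatively curved target. For part (1), recall from the discussion preceding Definition~\ref{defnofsimplicialmap} that any continuous $f$ is homotopic, fixing the vertices, to a simplicial map whose energy is no larger; moreover the energy $E_S$ of a simplicial map depends only on the images of the vertices, since once the vertices are fixed the edges go to the unique geodesic arcs joining them and the cone-line extension is determined. Thus $E_S$ descends to a function $\mathcal{E}$ on the configuration space $M^V$, where $V$ is the set of vertices of $\tau$, restricted to the component corresponding to the homotopy class of $f$ (more precisely, on the appropriate cover $\widetilde{M^V}$ that remembers the homotopy classes of the edge images, or equivalently we fix the homotopy class of the $1$--skeleton map). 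I would check that $\mathcal{E}$ is continuous in the vertex positions: the edge lengths $L_i$ depend continuously on the endpoints because geodesics in a complete non-positively curved manifold vary continuously with their endpoints (uniqueness of geodesics in each homotopy class, plus the fact that $M$ is compact).

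The next step is to establish that $\mathcal{E}$ attains a minimum. If $M$ were simply connected this would be immediate, but in general I would work in the universal cover $\widetilde M$ with its lifted metric and a lift $\widetilde f$ of the reference map; the edge images lift to geodesic segments in $\widetilde M$, and $\mathcal{E}$ becomes a function of the lifted vertex positions modulo the diagonal deck action. Since $E_S = \tfrac12\sum_i w_i L_i^2$ with all weights $w_i>0$, and the $1$--skeleton is connected, if some vertex runs off to infinity in $\widetilde M$ (relative to the others) then some $L_i\to\infty$ and $\mathcal{E}\to\infty$; hence sublevel sets of $\mathcal{E}$ are compact modulo the deck action, and a minimizer $f_0$ on the vertices exists. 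Extending $f_0$ over the edges by geodesics and over the $2$--simplices by the cone construction gives a simplicial map, still energy-minimizing among all maps homotopic to $f$ (by the reduction above), hence in particular simplicial harmonic. For the quasi-metric addendum, the same argument works once we restrict to the class of finite-energy maps: zero-length edges contribute nothing and impose no constraint on the vertices they join, and the $\Delta$-complex structure guarantees the surface stays connected through the positive-length edges, so the coercivity estimate still forces compactness of sublevel sets.

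For part (2), assume $M$ is negatively curved and $f$ is nontrivial. The key is that in a negatively curved manifold, for any two geodesic arcs $\gamma_0,\gamma_1$ with endpoints in the same homotopy classes, the function $t\mapsto (\text{length of the geodesic homotoped arc }\gamma_t)^2$ is a \emph{strictly} convex function of $t$ along the geodesic interpolation of the endpoints — this is the standard consequence of the strict convexity of the distance function on the universal cover (a $CAT(-\kappa)$ space). Given two simplicial harmonic maps $f_0,f_1$ in the homotopy class, lift to $\widetilde M$ and linearly interpolate the vertex images along geodesics, obtaining a path $f_t$ of simplicial maps whose energy is $\mathcal{E}(t) = \tfrac12\sum_i w_i L_i(t)^2$, a sum of convex functions, hence convex; it is strictly convex unless every $L_i(t)$ is affine in $t$, which (by strict convexity of distance) forces every edge to have the same image arc at $t=0$ and $t=1$, i.e. $f_0$ and $f_1$ agree on all vertices. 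Since both $f_0$ and $f_1$ are critical points, $\mathcal{E}'(0)=\mathcal{E}'(1)=0$, and a strictly convex function cannot have two critical points, so $\mathcal{E}$ is affine, all vertex images coincide, and $f_0=f_1$ as simplicial maps. The one genuine subtlety — the main obstacle — is the degenerate case where some edges could remain ambiguous because a vertex image lies on the geodesic through two others, or more seriously the case where nontriviality is needed: if the map is trivial (image of $\pi_1(F)$ cyclic), the whole picture can collapse onto a single geodesic axis and the minimizer is non-unique (one can slide along the axis), so I must use nontriviality to rule out this collapse. Concretely, nontriviality means the lifted vertex images do not all lie on a single geodesic line in $\widetilde M$ invariant under the relevant subgroup; I would argue that if $f_0\neq f_1$ then strict convexity along the interpolation is violated only if the configuration degenerates into such a line, contradicting nontriviality — this is where I expect to spend the most care, handling the interaction between the combinatorics of $\tau$ and the geometry of geodesics in $\widetilde M$.
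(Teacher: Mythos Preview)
Your approach matches the paper's in overall structure: reduce to simplicial maps, prove existence by compactness of sublevel sets in the universal cover (the paper does this by fixing a fundamental domain for a basepoint vertex and bounding all lifted vertices in a ball via the edge-length bound $L_i\leq\sqrt{2E_0}\,l_M/l_m$ together with bounded-length edge-paths in $\tau^{1}$), and prove uniqueness via convexity of $E_S$ along the geodesic interpolation of vertex images. Part~(1) and the quasi-metric addendum are fine.

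Part~(2) has a genuine error in the middle. You write that failure of strict convexity of $\mathcal{E}(t)$ means every $L_i(t)$ is affine, ``which (by strict convexity of distance) forces every edge to have the same image arc at $t=0$ and $t=1$.'' That implication is false. In a negatively curved target, $L_i(t)$ fails to be strictly convex precisely when the two endpoint paths of $e_i$ lie on a common geodesic $\lambda$; in that case $L_i(t)$ is \emph{linear} (hence $L_i^2$ is strictly convex unless $L_i$ is constant), but even $L_i$ constant does not force the edge image to be fixed---the whole edge can translate along $\lambda$. So one cannot conclude $f_0=f_1$ directly. You sense this in your final paragraph, but the repair you sketch (``the configuration degenerates into a line'') needs an actual propagation argument.

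The paper supplies exactly that: pick a vertex $v$ that moves nontrivially, let $\lambda$ be the geodesic containing its track, and use nontriviality to find an edge $e_i$ whose image (under $f_0$ or $f_1$) is not contained in $\lambda$. Then walk along a simple edge-path in $\tau^{1}$ from $v$ toward $e_i$. At each step, the refined convexity fact (for endpoints moving at constant speed along $\lambda$, $L$ is linear, so $L^2$ is strictly convex unless $L$ is constant) shows that either $E_S(f_t)$ is strictly convex, or the next vertex also moves nontrivially along $\lambda$. Induction on the path length eventually reaches an endpoint of $e_i$, where strict convexity is forced because $e_i$ does not lie in $\lambda$. This is the missing step you should fill in; once you have it, your proof coincides with the paper's.
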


\begin{proof}
(1) Any homotopy of a map $f:F\rightarrow M$ that decreases some image length
$L_{i}$ without increasing any $L_{j}$ reduces the simplicial energy (see
Equation~\ref{edgesum}). It follows that a map that minimizes simplicial
energy sends each edge of $\tau$ to an arc in $M$ that minimizes length in its
relative homotopy class.

A simplicial map is not determined by the images of the vertices of $\tau$,
since distinct relative homotopy classes of arcs connecting two points will
yield distinct geodesic arcs connecting those points. However a simplicial map
$f$ is determined uniquely by the images of the vertices and the homotopy
class (rel boundary) of $f$ on each edge of $\tau$.

We select a vertex $v\in F$ as a basepoint and also fix a fundamental region
$W$ in the universal cover $\widetilde{M}$ of $M$. Since $M$ is compact, so is
$W$. Let ${\mathcal{F}}$ be the set of simplicial maps homotopic to $f$, let
$E_{0}>0$ be a constant and let ${\mathcal{F}}_{0}$ be the subset of
${\mathcal{F}}$ consisting of maps with simplicial energy less than or equal
to $E_{0}$. We will show that ${\mathcal{F}}_{0}$ is compact.

Let $f$ be a map in ${\mathcal{F}}_{0}$, let $P$ denote $f(v)$ and let
$\widetilde{P}$ be a point of $W$ projecting to $P$. Let $\widetilde{F}$
denote the universal cover of $F$, equipped with the natural triangulation
lifted from $\tau$, and let $\widetilde{v}$ denote a point of $\widetilde{F}$
above $v$. There is a unique map $\widetilde{f}:\widetilde{F}\rightarrow
\widetilde{M}$ that covers $f$ and sends $\widetilde{v}$ to $\widetilde{P}$.
As $\widetilde{M}$ is simply connected, the simplicial map $\widetilde{f}$ is
determined completely by the images of the vertices of $\widetilde{F}$.

Each edge $e_{i}$ in $\tau$ is part of a simple arc $\gamma_{i}$ contained in
the $1$--skeleton of $\tau$ and connecting $e_{i}$ to the base point $v$. The
restriction of $f$ to $\gamma_{i}$ lifts to a path $\widetilde{\gamma_{i}}$ in
$\widetilde{M}$ which starts at $\widetilde{P}$.

Let $l_{m}$ be the length of the shortest edge in the given simplicial metric
on $\tau$, let $l_{M}$ be the length of the longest edge and let $L_{i}$ be
the length of the restriction of $f$ to $e_{i}$. As $f$ lies in ${\mathcal{F}%
}_{0}$, we have
\[
(L_{i}/l_{i})^{2}l_{m}^{2}\leq(L_{i}/l_{i})^{2}l_{i}l_{j}=\sigma_{i}^{2}%
l_{i}l_{j}\leq2E_{0},
\]
so that
\[
L_{i}\leq\sqrt{2E_{0}}(l_{i}/l_{m})\leq\sqrt{2E_{0}}(l_{M}/l_{m}).
\]
Let $L_{M}$ denote the quantity $\sqrt{2E_{0}}(l_{M}/l_{m})$, so that
$L_{i}\leq L_{M}$, for all $i$. Since $\gamma_{i}$ is simple it contains at
most $r$ edges, where $r$ is the total number of edges in $\tau$. Hence the
restriction of $f$ to $\gamma_{i}$ has length at most $rL_{M}$. Note that this
bound applies for any element $f$ of ${\mathcal{F}}_{0}$. As the point
$\tilde{P}$ lies in the compact fundamental domain $W$, there is a ball $B$ in
$\widetilde{M}$ such that each $\widetilde{\gamma_{i}}$ lies in $B$, for any
$f\in{\mathcal{F}}_{0}$. It follows that the simplicial map $\widetilde{f}$ is
completely determined by the finite number of points in $B$ that are vertices
of the $\widetilde{\gamma_{i}}$'s. The set of all such is compact, therefore a
minimum energy map exists, which establishes part (1).

Note that the above argument does not really use the non-positive curvature
assumption on $M$. Without that assumption, one can still find an energy
minimizing map homotopic to $f$, but may not be able to arrange that this map
is simplicial as defined in Definition \ref{defnofsimplicialmap}.

(2) Now suppose that $f$ is nontrivial, and that $f_{0}$ and $f_{1}$ are
simplicial harmonic maps in the homotopy class ${\mathcal{F}}$ of $f$. Further
suppose that $M$ is negatively curved. As $f_{0}$ and $f_{1}$ are homotopic,
there is a \textquotedblleft straight\textquotedblright\ homotopy $f_{t}$
between them, in which $f_{t}$ is a simplicial map for each $0\leq t\leq1$ and
the images of vertices of $\tau$ move along geodesic arcs in $M$ at constant speed.

We will make use of the following key fact. Let $\alpha(t)$ and $\beta(t)$
denote two geodesic arcs in the negatively curved manifold $M$ and let
$\gamma(t)$ be the shortest geodesic arc from $\alpha(t)$ to $\beta(t)$. Then
the length $L(t)$ of $\gamma(t)$ is a convex function of $t$. Further it is
strictly convex except possibly when the arcs $\alpha(t)$, $\beta(t)$ and
$\gamma(t)$ are all contained in a single geodesic \cite{Thurston}. Since each
$L_{i}(t)$ is convex we see that the simplicial energy function
\[
E_{S}(f_{t})=\sum w_{i}{L_{i}(t)}^{2},~~~w_{i}>0,
\]
given in Equation~\ref{edgesum} is also a convex function of $t$.

Now suppose that $f_{0}$ and $f_{1}$ do not coincide. Then there is a vertex
$v$ of $\tau$ such that the homotopy $f_{t}$ from $f_{0}$ to $f_{1}$ moves $v$
along a nontrivial geodesic arc $\mu$. We let $\lambda$ denote the unique
complete geodesic of $M$ which contains $\mu$. As $f$ is nontrivial, there is
an edge $e_{i}$ of $\tau$ whose image under $f_{0}$ or $f_{1}$ is not
contained in $\lambda$. First we suppose that $v$ is a vertex of $e_{i}$. In
this case, the function $L_{i}(t)$ is a strictly convex function of $t$, and
it follows immediately that $E_{S}(f_{t})$ is also strictly convex, and so can
have only one critical point. Since both $f_{0}$ and $f_{1}$ are critical
points, they must coincide, proving part (2) of the proposition in this case.
However it may be the case that every edge of $\tau$ incident to $v$ has image
which remains in $\lambda$ under each map $f_{t}$. Here we need a slightly
refined version of the above key fact. Let $\alpha(t)$ and $\beta(t)$ denote
two geodesic arcs in the negatively curved manifold $M$ which are contained in
a single geodesic $\lambda$, and suppose, in addition, that $\alpha$ and
$\beta$ each has constant speed. Let $\gamma(t)$ be the shortest geodesic arc
from $\alpha(t)$ to $\beta(t)$. Then the length $L(t)$ of $\gamma(t)$ is a
linear function of $t$. Hence $L(t)^{2}$ is a strictly convex function of $t$,
unless $L(t)$ is constant.

Now we return to the situation where $v$ is a vertex of $\tau$ such that the
homotopy $f_{t}$ moves $v$ along a nontrivial geodesic arc contained in
$\lambda$, there is an edge $e_{i}$ of $\tau$ whose image under $f_{0}$ or
$f_{1}$ is not contained in $\lambda$, and $v$ is not a vertex of $e_{i}$.
Choose a simple path in the $1$-skeleton of $\tau$ that connects $v$ to
$e_{i}$, let $e_{j}$ denote the first edge of this path and let $w$ denote the
other vertex of $e_{j}$. The preceding discussion shows that $L_{j}(t)^{2}$ is
a strictly convex function of $t$ unless $f_{t}(w)$ stays in $\lambda$ for all
$t$, and $L_{j}(t)$ is constant. In the first case, it follows as before that
$E_{S}(f_{t})$ is a strictly convex function of $t$. In the second case, it
follows that $w$, like $v$, is a vertex of $\tau$ such that the homotopy
$f_{t}$ from $f_{0}$ to $f_{1}$ moves $w$ along a geodesic arc contained in
$\lambda$. Further this geodesic arc is nontrivial because $L_{j}(t)$ is
constant. As $w$ is joined to $e_{i}$ by a shorter edge path than $v$, a
simple inductive argument completes the proof that $E_{S}(f_{t})$ is a
strictly convex function of $t$ in all cases. As above this implies that
$f_{0}$ and $f_{1}$ coincide, which completes the proof of part (2) of the proposition.

Finally, we consider the case when $l$ is a quasi-metric. It may no longer be
true that a harmonic map homotopic to $f$ exists. For example, $l$ might
assign zero length to each edge of $\tau$, which would mean that all maps
homotopic to $f$ have infinite energy unless $f$ is homotopic to a point.
However if $f$ is homotopic to a map with finite energy, then we can take
$l_{m}$ to be the smallest positive length of the edges in $\tau$. All zero
length edges must be mapped to a point if the total energy is finite, and so
they make no contribution to the energy. The previous argument then applies
and establishes that parts (1)\ and (2) of the theorem continue to hold in
this case.
\end{proof}

\section{Mean value and convex hull properties}

\label{meanvalueandconvexhullproperties}

In this section, we show that simplicial harmonic maps share some of the
classical properties of smooth harmonic maps.

Let $F$ be a Riemannian manifold. A map $f:F\rightarrow{\mathbb{R}}^{n}$ has
the \textit{mean value property} if for any point $x\in F$, and any ball $B$
of radius $r$ centered at $x$, the value of $f$ at $x$ is the average of the
values of $f$ over $\partial B$. If $f$ is harmonic it has the mean value
property. Harmonic maps to ${\mathbb{R}}^{n}$ also have the following
\textit{convex hull property}: for $D$ any compact connected submanifold of
$F$, the image $f(D)$ is contained in the convex hull of $f(\partial D)$. Note
that if $F$ is closed, then any harmonic map from $F$ to ${\mathbb{R}}^{n}$
must be constant, so the discussion in this section is only of interest when
$F\ $has non-empty boundary.

Finally a real valued function $f$ on $F$ is said to satisfy the \emph{maximum
principle} if, for any compact submanifold $D$ of $F$, the restriction of $f$
to $D$ attains its upper and lower bounds only on $\partial D$. Non-constant
harmonic functions also satisfy the maximum principle.

As mentioned in the introduction, the discrete harmonic maps of Pinkall and
Polthier \cite{PinkallPolthier:1993} do not satisfy the convex hull or mean
value properties. See \cite{PolthierRossman:2000} for an interactive
demonstration of an example found by Polthier and Rossman
\cite{PolthierRossman:2002} which shows that discrete harmonic maps do not
satisfy the convex hull property.

Next we show that simplicial harmonic maps do satisfy the convex hull and mean
value properties. Let $v$ be a vertex of a surface triangulation $\tau$, and
let $D$ denote the closed star of $v$, so that $D$ is the union of all the
simplices of $\tau$ which contain $v$. We let $v_{1},\ldots,v_{k}$ denote the
other vertices of $D$.

\begin{lemma}
\label{convexhullproperty}(Euclidean Mean Value and Convex Hull Properties)
Let $f$ be a simplicial harmonic map of a connected metrized triangulated
compact surface $(F,\tau,l)$ into ${\mathbb{R}}^{n}$. Then $f(v)$ lies in the
convex hull of $f(v_{1}),\ldots,f(v_{k})$. Further $f(v)$ is the centre of
mass of a collection of positive weights placed at $f(v_{1}),\ldots,f(v_{k})$.
\end{lemma}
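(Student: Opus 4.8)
The plan is to compute the first variation of simplicial energy under variations that move only the image of the vertex $v$, holding the images of all other vertices fixed, and to extract from the critical-point condition the fact that $f(v)$ is a convex combination of $f(v_1),\dots,f(v_k)$. Since $M = {\mathbb{R}}^n$, a simplicial map is completely determined by the images of the vertices (each edge maps to the unique straight segment, each cone line maps affinely), so a variation $f_t$ with $f_t(v) = f(v) + t\xi$ for a fixed vector $\xi \in {\mathbb{R}}^n$ and $f_t(v_j) = f(v_j)$ for all $j$ is a legitimate variation through simplicial maps. By the last remark before Proposition~\ref{uniqueharmonicmapsexist}, it suffices to test criticality against such variations.

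First I would use the edge-sum form of the energy, Equation~\ref{edgesum}, $E_S(f) = \frac{1}{2}\sum_{e_i} w_i L_i^2$ with $w_i > 0$. Only the edges incident to $v$, namely $e_1,\dots,e_k$ joining $v$ to $v_1,\dots,v_k$, change under the variation, and for these $L_i(t) = |f(v) + t\xi - f(v_i)|$, whose square $L_i(t)^2 = |f(v)-f(v_i)|^2 + 2t\langle \xi, f(v)-f(v_i)\rangle + t^2|\xi|^2$ is smooth in $t$. Differentiating at $t=0$ gives
\[
\left.\frac{d}{dt}\right|_{t=0} E_S(f_t) = \sum_{i=1}^{k} w_i \,\langle \xi,\, f(v) - f(v_i)\rangle = \Big\langle \xi,\ \sum_{i=1}^{k} w_i\big(f(v) - f(v_i)\big)\Big\rangle .
\]
The harmonicity of $f$ forces this to vanish for every $\xi \in {\mathbb{R}}^n$, hence $\sum_{i=1}^k w_i\big(f(v) - f(v_i)\big) = 0$, i.e.
\[
f(v) = \frac{\sum_{i=1}^{k} w_i\, f(v_i)}{\sum_{i=1}^{k} w_i}.
\]
Here $w_i$ is the weight of the edge $e_i$ defined in the text, namely $w_i = (l_{i_1}+l_{i_2}+l_{i_3}+l_{i_4})/2l_i$, which is strictly positive for any simplicial metric. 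This exhibits $f(v)$ as the centre of mass of the positive weights $w_1,\dots,w_k$ placed at $f(v_1),\dots,f(v_k)$, and in particular as a convex combination of those points, so $f(v)$ lies in their convex hull. (If $v$ has an edge to itself or a double edge to some $v_j$, the same computation goes through after grouping terms; the weights remain positive.)

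The only subtle point — and the step I expect to need the most care — is the justification that testing against these particular vertex-moving variations is enough, together with checking that the weights appearing are exactly the positive edge-weights $w_i$ and not something that could degenerate. The former is handled by the cited remark that the simplicial-harmonic condition is unchanged if one restricts to variations through simplicial maps, combined with the observation that over ${\mathbb{R}}^n$ such a simplicial variation is specified freely by prescribing the motion of the vertices; the latter is immediate from Equation~\ref{edgesum} and the remark, noted in Section~\ref{simplicialenergy}, that every $w_i$ is positive for a genuine simplicial metric. No non-positive curvature hypothesis on any target is needed, only that $M = {\mathbb{R}}^n$ so that geodesics are straight lines and energy is a manifestly convex quadratic in the vertex positions.
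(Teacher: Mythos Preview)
Your proof is correct and follows essentially the same approach as the paper: both compute the first variation of $E_S(f)=\sum w_iL_i^2$ under variations that move only $f(v)$, and read off from the critical-point equation that $f(v)$ is the weighted barycentre of the $f(v_j)$ with the positive edge-weights $w_j$. The only cosmetic difference is that the paper differentiates coordinate by coordinate while you differentiate along an arbitrary direction $\xi$; the content is identical.
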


\begin{proof}
Equation (1) shows that the simplicial energy $E_{S}(f)$ has the form $\sum
w_{i}{L_{i}}^{2}$, for some positive constants $w_{i}$. As $f$ is harmonic, it
is a critical point of the functional $E_{S}$. We will apply this fact to
variations of $f$ through simplicial maps which equal $f$ on all vertices
apart from $v$. For $1\leq j\leq k$, let $e_{j}$ denote the edge joining $v$
to $v_{j}$, and consider the terms $\sum_{j=1}^{k}w_{j}{L_{j}(t)}^{2}$ of
$E_{S}(f_{t})$. Let $(x_{1},\ldots,x_{n})$ be the coordinates of $f(v)$, and
let $(x_{1j},\ldots,x_{nj})$ be the coordinates of $f(v_{j})$, $1\leq j\leq
k$. Thus $\sum_{j=1}^{k}w_{j}{L_{j}(t)}^{2}=\sum_{j=1}^{k}w_{j}\sum_{i=1}%
^{n}(x_{i}-x_{ij})^{2}$. As $f$ is a critical point of $E_{S}$, we know that,
for $1\leq i\leq n$, the partial derivative of this summation with respect to
$x_{i}$ must be zero. Hence, for $1\leq i\leq n$, we have $\sum_{j=1}^{k}%
w_{j}2(x_{i}-x_{ij})=0$, so that $x_{i}\sum_{j=1}^{k}w_{j}=\sum_{j=1}^{k}%
w_{j}x_{ij}$. As each $w_{j}$ is positive, $x_{i}$ is a linear combination of
the $x_{ij}$'s with coefficients which lie between $0$ and $1$. It follows
that $f(v)$ lies in the convex hull of $f(v_{1}),\ldots,f(v_{k})$, and that
$f(v)$ is the centre of mass of weights $w_{1},\ldots,w_{n}$ placed at
$f(v_{1}),\ldots,f(v_{k})$.
\end{proof}

The special case of this result when $n=1$ gives the following form of the
Maximum Principle.

\begin{lemma}
(Maximum Principle) Let $f$ be a non-constant simplicial harmonic map of a
connected metrized triangulated compact surface $(F,\tau,l)$ into
${\mathbb{R}}$. Then for any subsurface $E$ of $F$ which is also a subcomplex,
the restriction of $f$ to $E$ attains its upper and lower bounds only on
$\partial E$.
\end{lemma}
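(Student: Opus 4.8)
The plan is to reduce the Maximum Principle to the Convex Hull Property established in Lemma~\ref{convexhullproperty}, which is the case $n=1$ of that lemma. Suppose $f:(F,\tau,l)\rightarrow\mathbb{R}$ is a non-constant simplicial harmonic map, and let $E$ be a subsurface of $F$ which is also a subcomplex. Suppose for contradiction that the restriction $f|_E$ attains its maximum value $c$ at an interior vertex $v$ of $E$; the argument for the minimum is identical (or follows by applying the maximum case to $-f$, which is again simplicial harmonic). Since $v$ is in the interior of $E$ and $E$ is a subcomplex, the entire closed star $D$ of $v$ lies in $E$, so all the neighboring vertices $v_1,\ldots,v_k$ satisfy $f(v_j)\le c$.

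The key step is then to invoke Lemma~\ref{convexhullproperty}: $f(v)$ is the centre of mass of positive weights $w_1,\ldots,w_k$ placed at the real numbers $f(v_1),\ldots,f(v_k)$, i.e. $f(v)=\sum_j w_j f(v_j)/\sum_j w_j$. Because all weights $w_j$ are strictly positive and every $f(v_j)\le c=f(v)$, a weighted average equal to its own maximal term forces $f(v_j)=c$ for every $j$. Thus the set of interior vertices of $E$ at which $f$ attains the value $c$ is both nonempty and, by this argument applied at each such vertex, propagates to all adjacent vertices of $E$. Since $F$ (hence $E$, after noting each component of $E$ must be handled separately) is connected, one concludes $f\equiv c$ on all vertices of that component of $E$. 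But a simplicial map is determined on each $2$--simplex by sending cone lines to constant speed geodesics, so $f$ constant on the vertices of a component forces $f$ constant on that whole component; if $E=F$ this contradicts $f$ being non-constant, and in general it shows the upper bound is not attained at an interior point unless $f$ is locally constant there.

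To make the propagation argument clean I would phrase it as follows. Let $V_c$ be the set of vertices $v$ of $E$, lying in the interior of $E$, with $f(v)=c:=\max_{E}f$. The mean value identity shows that if $v\in V_c$ then every neighbor $v_j$ of $v$ in $\tau$ also has $f(v_j)=c$; moreover each such $v_j$ lies in $E$ since $D\subseteq E$. If some $v_j$ is an interior vertex of $E$ it lies in $V_c$; the only way the process can halt is by reaching $\partial E$. But if $f$ attains its maximum on $E$ only in the interior — which is the hypothesis we are contradicting — then no boundary vertex can have value $c$, so the process never halts and spreads $c$ over a whole connected component of the vertex set of $E$. As above this makes $f$ constant on that component, contradicting either non-constancy (when the component is all of $F$) or, more precisely, contradicting the assumption that the bound was attained only in the interior.

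The main obstacle, and the only point requiring care, is the bookkeeping around boundary behavior: one must ensure that the closed star of an interior vertex of $E$ really is contained in $E$ (this uses that $E$ is a full subcomplex and $v$ is interior), and one must be careful that "attains its upper and lower bounds only on $\partial E$" is the correct negation to contradict — in particular handling the degenerate possibility that a component of $E$ is itself a single closed simplex, where the statement is vacuous or immediate. None of this involves real computation; the substantive mathematical content is entirely carried by the positivity of the weights $w_i$ in Lemma~\ref{convexhullproperty}, exactly as the strict positivity of weights drives the classical discrete maximum principle.
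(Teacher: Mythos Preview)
Your approach is correct and matches the paper's: the paper gives no separate proof of this lemma, simply stating that it is the special case $n=1$ of Lemma~\ref{convexhullproperty}, and your propagation argument via the positive-weight mean value identity is exactly the standard way to make that deduction precise. The minor bookkeeping issues you flag (that an interior vertex of $E$ is automatically interior to $F$, so the mean value identity applies, and that the maximum on a simplex is attained at a vertex) are routine and do not affect the substance of the argument.
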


The convex hull of a subset of a general Riemannian manifold $M$ may be rather
complicated. For simplicity we consider only the case of non-positive
curvature and will restrict attention to convex balls in $M$. In this setting
we obtain the following result.

\begin{lemma}
\label{convexhull} Let $f$ be a simplicial harmonic map of a connected
metrized triangulated compact surface $(F,\tau,l)$ into a convex ball $B$ with
a non-positively curved Riemannian metric. Let $v$ be an interior vertex of
$\tau$ and let $D$ denote the closed star of $v$. Let $v_{1},\ldots,v_{k}$
denote the other vertices of $D$. Then $f(v)$ lies in the convex hull of
$f(v_{1}),\ldots,f(v_{k})$.
\end{lemma}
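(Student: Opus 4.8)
The plan is to reproduce the argument of Lemma~\ref{convexhullproperty}, replacing the explicit Euclidean computation of the critical point by a convexity argument valid in any $CAT(0)$ space. A convex ball $B$ carrying a non-positively curved Riemannian metric is such a space: any two of its points are joined by a unique geodesic lying in $B$, and for each fixed $q\in B$ the function $p\mapsto d(p,q)^{2}$ is convex along geodesics. The convexity of squared distance is a special case of the ``key fact'' used in the proof of Proposition~\ref{uniqueharmonicmapsexist}, namely the one in which one of the two geodesic arcs has degenerated to the point $q$. I will also use the standard fact that every compact convex subset $C\subseteq B$ has a unique nearest-point projection $\pi_{C}\colon B\to C$, and that this projection satisfies the Pythagorean-type inequality
\[
d(p,c)^{2}\;\ge\;d\bigl(p,\pi_{C}(p)\bigr)^{2}+d\bigl(\pi_{C}(p),c\bigr)^{2}\qquad(p\in B,\ c\in C),
\]
which expresses the fact that the geodesic from $p$ to $\pi_{C}(p)$ meets $C$ at a non-acute angle.

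First I would isolate the part of the simplicial energy that depends on the image of $v$. By Equation~(\ref{edgesum}) we may write $E_{S}(f)=\sum_{i}w_{i}L_{i}^{2}$ with every weight $w_{i}>0$. Let $e_{1},\dots,e_{m}$ be the edges of $\tau$ incident to $v$ and let $v'_{1},\dots,v'_{m}\in\{v_{1},\dots,v_{k}\}$ be their other endpoints (with $v'_{j}=v$, contributing nothing, if $e_{j}$ is a loop). If we replace $f$ by a map that agrees with $f$ at every vertex other than $v$, sends $v$ to a point $p\in B$, and sends each $e_{j}$ to the unique geodesic from $p$ to $f(v'_{j})$, then every $L_{i}$ with $e_{i}\notin\{e_{1},\dots,e_{m}\}$ is unchanged, while the remaining contribution to $E_{S}$ becomes
\[
g(p):=\sum_{j=1}^{m}w_{j}\,d\bigl(p,f(v'_{j})\bigr)^{2}.
\]
The way such a map is extended over the $2$--simplices of $\tau$ is irrelevant, since it does not affect the energy. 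Because squared distance is convex along geodesics and the $w_{j}$ are positive, $g$ is a convex function on $B$ (and it is smooth).

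Since $v$ is an \emph{interior} vertex of $\tau$, the variations just described --- moving $f(v)$ along geodesics in $B$ while keeping $f$ fixed on all other vertices --- are admissible, so harmonicity of $f$ forces $f(v)$ to be a critical point of $g$ on $B$, and hence, by convexity, a global minimizer of $g$ over $B$. It remains to show that this minimum is attained only in $C$, the (closed) convex hull of $f(v_{1}),\dots,f(v_{k})$, which is a compact convex subset of $B$. Suppose $p\in B\setminus C$ and put $q:=\pi_{C}(p)$, so $d(p,q)>0$. Applying the Pythagorean inequality with $c=f(v'_{j})$ and summing against the positive weights $w_{j}$ gives
\[
g(q)\;\le\;g(p)-d(p,q)^{2}\sum_{j=1}^{m}w_{j}\;<\;g(p),
\]
so $p$ is not a minimizer of $g$. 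Hence $f(v)\in C$, as claimed.

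The step I expect to be the main obstacle is the first one: checking that a convex ball equipped with a non-positively curved metric really does enjoy the $CAT(0)$ properties used above --- uniqueness and continuity of geodesics, convexity of squared distance along them, and a nearest-point projection onto compact convex sets obeying the orthogonality/Pythagorean estimate --- together with the ancillary point that the closed convex hull of finitely many points of $B$ is compact, so that $\pi_{C}$ is defined. One should also be slightly careful that the variations used are genuinely admissible, i.e. that $f(v)$ is an interior critical point of the energy in the relevant sense; this is exactly where the hypothesis that $v$ is an interior vertex enters. Granting these standard facts, the proof is the Euclidean argument of Lemma~\ref{convexhullproperty} with ``the unique critical point of a positive-definite quadratic form is its minimum'' replaced by ``a critical point of a convex function is a global minimum, which the nearest-point projection then pushes into $C$.''
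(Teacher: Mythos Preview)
Your proof is correct and shares the same core idea as the paper's: if $f(v)$ lay outside the convex hull $C$, project it to the nearest point of $C$ and show the energy strictly drops, contradicting harmonicity. The execution differs, however. The paper gives a short first-variation argument: taking the tangent hyperplane $P$ at $f(v)$ perpendicular to the shortest geodesic $\alpha$ from $f(v)$ to $C$, it observes that every geodesic from $f(v)$ to an $f(v_j)$ has initial tangent vector on the $C$--side of $P$, so sliding $f(v)$ infinitesimally in that direction shortens every incident edge and hence decreases $E_S$. You instead pass through the global picture: you assemble the relevant energy terms into the convex function $g(p)=\sum_j w_j\,d(p,f(v'_j))^2$, use convexity to promote the critical point $f(v)$ to a global minimizer, and then invoke the $CAT(0)$ Pythagorean inequality for the nearest-point projection to rule out minimizers outside $C$. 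Your route is a bit heavier on prerequisites (convexity of squared distance, existence and contraction properties of $\pi_C$) but yields the stronger intermediate conclusion that $f(v)$ globally minimizes $g$; the paper's argument needs only the first variation of arc length and is correspondingly quicker.
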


\begin{proof}
If $f(v)$ lies outside the convex hull of $f(v_{1}),\ldots,f(v_{k})$, then
there is a shortest geodesic arc $\alpha$ from $f(v)$ to the convex hull. Take
the tangent plane $P$ at $f(v)$ perpendicular to $\alpha$. The geodesics
joining $f(v)$ to $f(v_{j})$ have tangent vectors at $f(v)$ that all lie on
the same side of $P$. Moving $f(v)$ away from $P$ towards this side will
reduce each $L_{j}$ and so reduce the energy of $f$, contradicting the
assumption that $f$ is simplicial harmonic.
\end{proof}

Lemma~\ref{convexhull} implies a universal area bound for homotopy classes of
maps of a fixed surface into a negatively curved manifold. We first recall the
well-known corresponding bound for smooth maps.

\begin{lemma}
\label{arealessthan4pi(g-1)}Let $M$ be a hyperbolic manifold, let $F$ be the
closed orientable surface of genus $g>1$, and let $f:F\rightarrow M$ be any
map. Then $f$ can be homotoped to a map $f_{0}$ with $\mbox{Area}(f_{0}%
)\leq2\pi(2g-2)$.

If $M$ has sectional curvature bounded above by $-a<0$ then we have
$\mbox{Area}(f_{0})\leq2\pi(2g-2)/a$.
\end{lemma}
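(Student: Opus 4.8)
The plan is to prove the classical smooth bound by the following standard route. First I would reduce to the incompressible case: if $f_*:\pi_1(F)\to\pi_1(M)$ has nontrivial kernel, then $f$ factors (up to homotopy) through a graph, since $M$ is aspherical (being hyperbolic, hence $K(\pi,1)$), so $f$ is homotopic to a map whose image is $1$--dimensional and therefore has a representative of area zero (or arbitrarily small area); the bound $2\pi(2g-2)$ is then trivially satisfied. So assume $f_*$ is injective.

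In the injective case the idea is to pass to a least-area representative. One standard approach: give $F$ a hyperbolic metric and apply the existence theory for harmonic maps (Eells--Sampson \cite{EellsSampson}), obtaining a harmonic map $f_0$ homotopic to $f$. For a harmonic map from a surface the energy density dominates the Jacobian, so $\mbox{Area}(f_0)\le E(f_0)$, and one shows the Bochner formula gives that the pulled-back metric has curvature $\le$ the sectional curvature of $M$ (this is the same phenomenon underlying Lemma~\ref{convexhull}: harmonicity forces the induced geometry to be at least as negatively curved as the target). Then, either directly from the curvature comparison together with Gauss--Bonnet, or more simply by working with a least-area map in the homotopy class, one gets that the induced area form is bounded by the area form of a hyperbolic metric on $F$ (since curvature $\le -1$ and the area is $\int_F K_{\text{induced}}\,dA / (\text{something})$ controlled by $-\chi(F)$). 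Concretely: for an immersed (or almost-everywhere nonsingular) representative with induced curvature $\le -1$, Gauss--Bonnet gives $\mbox{Area}\le -\int_F K\,dA = -2\pi\chi(F) = 2\pi(2g-2)$, and if the target curvature is $\le -a$ one rescales, replacing $1$ by $a$ and obtaining the factor $1/a$.

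A cleaner variant, which I would actually prefer to present because it avoids the analytic regularity of harmonic maps, is to take $f_0$ to be a least-area map in its homotopy class (existence in the incompressible case is classical, e.g.\ Schoen--Yau / Sacks--Uhlenbeck). A least-area map is a branched minimal immersion; at each immersed point the induced curvature is $\le$ the ambient sectional curvature $\le -a$ by the Gauss equation (the second fundamental form contributes a nonpositive term), and branch points only help. Applying Gauss--Bonnet to the singular induced metric then yields $\mbox{Area}(f_0)\le 2\pi|\chi(F)|/a = 2\pi(2g-2)/a$, and taking $a=1$ gives the hyperbolic case.

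The main obstacle is dealing with the possibility that $f$ is not $\pi_1$--injective and, relatedly, that a genuinely least-area \emph{map} (as opposed to embedded surface) in the homotopy class need not be an immersion — so one must be careful that branch points and sheets of the map folding back do not spoil the Gauss--Bonnet estimate. The resolution is that branch points contribute negatively to the Euler-characteristic count (they raise the area one would otherwise compute), so the inequality goes the right way; and the non-injective case is disposed of separately as above. I expect the authors to simply cite the known smooth bound rather than reprove it, since Lemma~\ref{arealessthan4pi(g-1)} is recalled precisely as the smooth model for the simplicial statement that follows from Lemma~\ref{convexhull}.
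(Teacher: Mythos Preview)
Your ``cleaner variant'' is exactly the paper's argument: take a least-area representative (via Schoen--Yau / Sacks--Uhlenbeck), use the Gauss equation to get induced curvature $K\le -a$, and apply Gauss--Bonnet to obtain $\mbox{Area}(f_0)\le 2\pi(2g-2)/a$. So the heart of your proof is correct and on the same track.

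The gap is in your reduction step. The claim that ``if $f_*:\pi_1(F)\to\pi_1(M)$ has nontrivial kernel, then $f$ factors (up to homotopy) through a graph'' is false. Asphericity of $M$ tells you only that $f$ factors through a $K(f_*(\pi_1 F),1)$, and the image group need not be free: for instance, a pinch map $F_3\to F_2$ collapsing a genus-one subsurface with one boundary component has nontrivial kernel but image the full (non-free) surface group $\pi_1(F_2)$. So you cannot dispose of the non-injective case this way.

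The paper's fix is different and worth noting: rather than trying to factor $f$, one \emph{compresses}. If some essential simple closed curve on $F$ maps trivially in $M$, do surgery on it (cut along it and cap off with disks, which is possible up to homotopy since $\pi_2(M)=0$), lowering the genus. Iterate until no essential simple loop maps trivially; the Schoen--Yau / Sacks--Uhlenbeck existence theorem then applies to this compressed, lower-genus surface and gives the bound for it, which is even better. At the end, recover the original homotopy class by adding back thin tubes of zero area. This is the step you should replace your ``factors through a graph'' paragraph with.
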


\begin{proof}
Suppose that $M$ has sectional curvature bounded above by $-a<0$. If $f$ sends
a simple loop on $f$ to a trivial loop in $M$, we compress $f$. Otherwise $f$
is homotopic to a least area immersion in its homotopy class, using the
existence theorems for minimal surfaces due to Schoen and Yau
\cite{SchoenYau:78}, and Sacks and Uhlenbeck \cite{SacksUhlenbeck}. Note that
as $\pi_{2}(M)=0$, the least area maps obtained in \cite{SchoenYau:78} and
\cite{SacksUhlenbeck} must be homotopic to $f$.

The normal curvature $k_{n}$ of a minimal immersion is non-positive. Its
sectional curvature $K\leq-a+k_{n}\leq-a$. By the Gauss-Bonnet Theorem, with
$F_{0}=f_{0}(F)$,
\[
2\pi(2-2g)=2\pi\chi(F_{0})=\int_{F_{0}}K~dA\leq\int_{F_{0}}%
(-a)~dA=-a\mbox{Area}(F_{0})
\]
So $\mbox{Area}(F_{0})\leq2\pi(2g-2)/a$.

The homotopy class of the original uncompressed surface map can be recovered
by adding tubes of zero area.
\end{proof}

We reprove this result using simplicial harmonic maps. In doing so we no
longer need to use the existence results for minimal surfaces. Since our
results apply equally well to non-orientable surfaces, we use the Euler number
rather than the genus of a surface.

\begin{lemma}
\label{arealessthan2pichi}Let $M$ be a hyperbolic manifold, let $F$ be a
closed surface with Euler characteristic $\chi<0$, and let $f:F\rightarrow M$
be any map. Then $f$ can be homotoped to a map $f_{0}$ with $\mbox{Area}(f_{0}%
)\leq2\pi\left\vert \chi\right\vert $.

If $M$ has sectional curvature bounded above by $-a<0$ then we have
$\mbox{Area}(f_{0})\leq2\pi\left\vert \chi\right\vert /a$.
\end{lemma}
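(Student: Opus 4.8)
First I would dispose of the trivial case: if $f$ is trivial then, since $\pi_2(M)=0$, it is homotopic to a map factoring through a circle, hence to one with one--dimensional image and zero area, and as $2\pi|\chi|>0$ there is nothing to prove. So assume $f$ is nontrivial. I would fix any triangulation $\tau$ of $F$ and any simplicial metric $l$ on $(F,\tau)$ and use Proposition~\ref{uniqueharmonicmapsexist} to homotope $f$ to a simplicial harmonic map $f_0\colon(F,\tau,l)\to M$ (replacing $M$ by a compact piece containing a neighbourhood of the image if $M$ is not compact); it then suffices to bound $\mbox{Area}(f_0)$. The key observation concerns the local structure of a simplicial map. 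Lifting to the universal cover $\widetilde M$, which is simply connected with sectional curvature $\le-a$ (with $a=1$ in the hyperbolic case), the restriction of $\widetilde f_0$ to a lifted $2$--simplex $T$ is the cone, from the image of the preferred vertex of $T$, over the geodesic arc joining the images of the other two vertices. Since any two points of $\widetilde M$ lie on a unique geodesic, this cone map is injective off its apex, so the pulled--back metric on $T$ computes $\mbox{Area}(f_0|_T)$; and since $f_0(T)$ is ruled by geodesics of $\widetilde M$, its second fundamental form vanishes along the ruling, so by the Gauss equation the Gauss curvature of the pulled--back metric is at most the ambient sectional curvature, hence at most $-a$, wherever $\widetilde f_0$ is an immersion. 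Finally the three edges of $T$, being geodesics of $\widetilde M$ contained in $f_0(T)$, are geodesics of the pulled--back metric.

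Next I would run Gauss--Bonnet on each $2$--simplex. Each $T$ now carries a metric of curvature $\le-a$ making it a geodesic triangle, so $\mbox{Area}(f_0|_T)\le(\pi-s_T)/a$, where $s_T$ is the sum of its three angles; because edges map to geodesics, each such angle is simply the $M$--angle between the two edge--geodesics of $T$ at the relevant vertex. Summing over all $2$--simplices and regrouping the angles around the vertices of $\tau$ gives
\[
\mbox{Area}(f_0)\ \le\ \frac1a\Bigl(\pi F_2-\sum_{v}\theta_v\Bigr),\qquad \theta_v:=\sum_{T\ni v}\bigl(\text{angle of }T\text{ at }v\bigr),
\]
with $F_2$ the number of $2$--simplices. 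Since each edge lies in exactly two $2$--simplices, $3F_2=2E$, whence $E=\tfrac32F_2$ and $\chi=V-E+F_2=V-\tfrac12F_2$, i.e.\ $F_2=2V-2\chi$. Therefore $\pi F_2-\sum_v\theta_v\le\pi F_2-2\pi V=2\pi|\chi|$ provided $\theta_v\ge2\pi$ at every vertex, and the whole lemma comes down to this inequality.

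The hard part is the cone--angle estimate $\theta_v\ge2\pi$, and this is where harmonicity is used. Fix a vertex $v$ and let $\widehat e_1,\dots,\widehat e_m$ be the unit tangent vectors at $f_0(v)$ to the geodesic arcs $f_0(e_j)$ of the edges $e_j$ at $v$, listed in the cyclic order coming from the link of $v$ (a loop edge contributing its two endpoint directions). Arguing as in the proof of Lemma~\ref{convexhull}: were all the $\widehat e_j$ to lie in one open half--space of $T_{f_0(v)}M$, one could move $f_0(v)$ in the corresponding direction, strictly decreasing every $L_j$ and hence $E_S=\sum_i w_iL_i^2$ (Equation~\ref{edgesum}, with $w_i>0$) through a simplicial variation, contradicting that $f_0$ is a critical point of $E_S$; so the $\widehat e_j$ lie in no open half--space. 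Hence the closed geodesic polygon through $\widehat e_1,\dots,\widehat e_m$ on the unit sphere of $T_{f_0(v)}M$ is contained in no open hemisphere, and therefore --- by the classical fact that a closed curve on a round sphere which lies in no open hemisphere has length at least $2\pi$ --- it has length at least $2\pi$. But its length is $\sum_j\angle(\widehat e_j,\widehat e_{j+1})$, which by the identification of angles in the previous paragraph equals $\theta_v$. Thus $\theta_v\ge2\pi$, and the computation above gives $\mbox{Area}(f_0)\le2\pi|\chi|/a$, and $\le2\pi|\chi|$ when $M$ is hyperbolic.

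One wrinkle remains: a $2$--simplex whose image is an arc or a point --- possible only when some edge of $\tau$ lies in a trivial class, so $L_i=0$ --- has degenerate pulled--back metric, and a vertex whose entire star collapses has no useful vectors $\widehat e_j$. Such simplices contribute no area, and I would remove the difficulty by first collapsing every edge of $\tau$ of zero image length, an operation that changes neither $\chi$ nor $\mbox{Area}(f_0)$, or alternatively by perturbing $l$ slightly and passing to a limit; after that one may assume $f_0$ is nondegenerate and the argument applies as written. Everything other than the estimate $\theta_v\ge2\pi$ --- the ruled--surface curvature comparison, the per--triangle Gauss--Bonnet bound, and the Euler--characteristic count --- is routine, so that estimate is where the real work lies.
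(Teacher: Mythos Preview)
Your argument is correct and follows the same path as the paper's proof: homotope to a simplicial harmonic map, use the ruled structure on each $2$--simplex to get induced curvature $\le -a$, invoke the convex hull property (which you make explicit via the hemisphere/length--$2\pi$ fact) to obtain cone angle $\ge 2\pi$ at every vertex, and then run Gauss--Bonnet. One small correction in your final paragraph: a $2$--simplex can have image contained in a geodesic arc without any edge satisfying $L_i=0$ (three distinct collinear image vertices suffice), so your characterization of the degenerate case is off---but this does not damage the proof, since such a simplex still contributes zero area and has $s_T=\pi$, so the per--triangle inequality $\mbox{Area}(f_0|_T)\le(\pi-s_T)/a$ holds with equality and the summation goes through unchanged.
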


\begin{proof}
Suppose that $M$ has sectional curvature bounded above by $-a<0$. Choose a
triangulation and a simplicial metric on $F$, and let $f_{0}$ be a simplicial
harmonic map in the homotopy class of $f$. If $f$ is trivial, then $f_{0}$ has
image contained in some geodesic of $M$ and so has zero Riemannian area. In
this case the required inequality is obvious. Otherwise $f$ is nontrivial, so
that $f_{0}$ is unique and at least one $2$--simplex of $\tau$ does not have
image contained in a geodesic. The simplicial map $f_{0}$ is a ruled smooth
immersion on each such $2$--simplex. Thus the Riemannian metric induced on $F$
has curvature $\leq-a$ on the interior of such $2$--simplices, and edges are
sent to geodesics. At the vertices, the convex hull property in Lemma
\ref{convexhull} shows that the angle sum around each vertex is greater than
$2\pi$, and hence that the vertices contribute negative curvature to the total
curvature when using the Gauss-Bonnet formula for simplicial complexes. The
argument now follows as in the smooth case.
\end{proof}

\begin{remark}
The preceding lemma can be proved in essentially the same way if $F$ is
divided into polygons. For then each polygon has a cone structure, and so has
a division into subcones which are triangles. Again each triangle either has
image contained in a geodesic, or is mapped by a ruled smooth immersion into
$M$.
\end{remark}

\section{Families of maps}

\label{families}

Given an immersion of a surface into a Riemannian manifold, the mean curvature
flow gives an area decreasing flow that pushes the surface in the direction
that decreases its area as rapidly as possible. However singularities develop
as the surface evolves under mean curvature, making it difficult to extend the
flow beyond the time of singularity formation. Additional difficulties arise
in extending mean curvature flow to families of surfaces, and these have
proved an obstruction to topological applications. Harmonic maps offer an
alternative approach to constructing area decreasing flows, a fact exploited
in \cite{HassThompsonThurston}.

In this section we show how to deform a multi-parameter family of simplicial
maps to a family of simplicial harmonic maps. Under this deformation the
simplicial energy of each map in the family decreases monotonically. The
simplicial area may not be monotonically decreasing, but the simplicial area
of each surface is at all times bounded above by its initial value. Although
some of the results in this section work for triangulations, we will use only
decompositions into quadrilaterals. This is because of the applications we
have in mind in this and the next section.

For these applications, it is important that the Riemannian area of a
simplicial map is bounded by its simplicial energy. Note that no such bound
can exist for general maps, as the simplicial energy of a map is determined
solely by the restriction of that map to the $1$--skeleton of the given triangulation.

\begin{lemma}
\label{E>standardA} Let $F$ be a compact surface with a decomposition $\tau$
into quadrilaterals and a simplicial quasi-metric $l$. Let $f:F\rightarrow M$
be a simplicial map to a non-positively curved Riemannian manifold $M$. Then
the Riemannian area, $\mbox{Area}(f)$, satisfies the inequality%
\[
\mbox{Area}(f)\leq E_{S}(f)/2.
\]

\end{lemma}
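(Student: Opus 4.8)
The plan is to bound the Riemannian area of $f$ on each quadrilateral $P$ of $\tau$ by the corresponding contribution $\tfrac12 E_S(f,P)$ to the simplicial energy, and then sum over all $P$. Fix a quadrilateral $P$ with a chosen preferred vertex $p$; recall from Section~\ref{simplicialharmonic} that the simplicial map $f$ restricted to $P$ is obtained from the cone structure on $P$ centred at $p$, so $f|P$ is the cone (through constant-speed geodesics in $M$) on the geodesic path $f(\partial P')$, where $\partial P'$ is the union of the two edges of $P$ not incident to $p$. Thus $f|P$ is a ruled surface in $M$: it is swept out by a $1$--parameter family of geodesic segments emanating from $f(p)$, one for each point of the two far edges.

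The key step is an infinitesimal area estimate for this ruled surface. Parametrise the two far edges of $P$ (say they have $\tau$--lengths $l_j$ and $l_k$ and image lengths $L_j$ and $L_k$) by arclength-proportional parameters, and parametrise each cone line by its own fractional parameter. At a point of the ruled surface, the tangent plane is spanned by the velocity of the cone geodesic and the velocity of the curve traced by the far endpoint as we slide along the far edge. The area element is the norm of the wedge of these two vectors, which is at most the product of their norms. The first norm is the speed of the cone geodesic, which is at most the distance from $f(p)$ to the corresponding far point, and this is in turn at most the sum of geodesic distances along the image — controlled by $L$'s. The second norm is the speed of the far-edge image curve, bounded by $L_j$ or $L_k$. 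Integrating, the Riemannian area of $f|P$ is bounded by a sum of products $L_aL_b$ over pairs of edges of $P$; comparing with $E_S(f,P)=\sum(\sigma_i^2+\sigma_{i+1}^2)l_il_{i+1}\ge 2\sum L_iL_{i+1}$ (the inequality from Lemma~\ref{E>Aforpolygons}, i.e. $A_S(f,P)\le \tfrac12 E_S(f,P)$) gives the quadrilateral-by-quadrilateral bound once the bookkeeping of which pairs appear is done. Here non-positive curvature of $M$ enters twice: it guarantees that the cone geodesics are uniquely defined, and, via the convexity of distance along geodesics (the same key fact used in Proposition~\ref{uniqueharmonicmapsexist}), it lets us bound the lengths of the cone lines by the side lengths rather than by something larger.

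I would organise the write-up as: (i) recall the ruled/cone structure of $f|P$; (ii) set up the two-parameter parametrisation and write $\mathrm{Area}(f|P)=\iint \|X_u\wedge X_v\|\,du\,dv \le \iint \|X_u\|\,\|X_v\|\,du\,dv$; (iii) bound $\|X_u\|$ and $\|X_v\|$ by the relevant image edge lengths using the triangle inequality in $M$ and geodesic convexity; (iv) conclude $\mathrm{Area}(f|P)\le A_S(f,P)$ (possibly with a constant that is absorbed comfortably), then apply Lemma~\ref{E>Aforpolygons}; (v) sum over $P$. The main obstacle is step (iii): getting a clean bound on the cone-line speeds. The cone line at far-parameter value~$s$ runs from $f(p)$ to a point $q(s)$ on the image of a far edge, and its speed is $d_M(f(p),q(s))$ divided by the parameter length; one must check that this distance is dominated by a quantity like $L_i+L_j$ (a walk along image edges from $f(p)$ to $q(s)$) uniformly in $s$, so that after integrating against the far-edge speed one recovers precisely the bilinear expression $\sum L_aL_b$ that matches $A_S(f,P)$ up to the factor already supplied by Lemma~\ref{E>Aforpolygons}. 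The curvature hypothesis is exactly what makes this distance bound behave well, since in non-positive curvature the geodesic from $f(p)$ to $q(s)$ cannot be longer than any broken-geodesic path with the same endpoints.
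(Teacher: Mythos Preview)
Your overall architecture (bound the Riemannian area on each quadrilateral, then invoke Lemma~\ref{E>Aforpolygons} and sum) matches the paper, but step~(iii) has a genuine gap, and it is not merely a matter of bookkeeping.

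First, a small correction: the quantity $\|X_u\|$ at an interior point of the cone is \emph{not} the speed of the far-edge image curve. It is the norm of a Jacobi field along the cone geodesic, vanishing at the cone point. What convexity of the distance function in non-positive curvature gives you is the bound $\|X_u(u,t)\|\le t\,\|X_u(u,1)\|$, not equality. Your later reference to ``convexity of distance along geodesics'' is exactly the right tool, but you have applied it to the wrong factor: it controls the transverse Jacobi field, not the cone-line length. The cone-line length $\|X_t\|=d_M(f(p),q(u))$ is bounded by the triangle inequality alone.

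Second, and more seriously, even with these bounds in place the integration does not produce the inequality you need. Working on one of the two sub-triangles (the one bounded by $e_1$, $e_2$ and the diagonal), the estimates $\|X_u\|\le tL_2$ and $\|X_t\|\le L_1+uL_2$ give
\[
\int_0^1\!\!\int_0^1 \|X_u\|\,\|X_t\|\,dt\,du
\;\le\;\frac{L_1L_2}{2}+\frac{L_2^2}{4},
\]
and similarly $\tfrac{L_3L_4}{2}+\tfrac{L_3^2}{4}$ for the other sub-triangle. The square terms $L_2^2,L_3^2$ are not controlled by $A_S(f,P)=L_1L_2+L_2L_3+L_3L_4+L_4L_1$: take $L_1=L_4=0$ and $L_2\gg L_3$ to see that your bound can exceed $A_S(f,P)$ by an arbitrary factor. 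So the promised constant cannot be ``absorbed comfortably''.

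The paper avoids this entirely with a one-line comparison. It splits $P$ into two triangles along the diagonal from the preferred vertex; each is a ruled $2$--simplex in $M$ and hence carries an induced metric of non-positive curvature. A non-positively curved triangle has area at most that of the Euclidean triangle with the same side lengths, and the Euclidean area is at most half the product of any two sides. Choosing the two non-diagonal sides gives
\[
\mbox{Area}(f|P)\;\le\;\tfrac12 L_1L_2+\tfrac12 L_3L_4\;\le\;\tfrac12 A_S(f,P)\;\le\;\tfrac14 E_S(f,P),
\]
and summing over $P$ yields $\mbox{Area}(f)\le E_S(f)/2$. The point is that the diagonal length never enters the estimate, which is precisely what your direct integration cannot achieve because $\|X_t\|$ depends on it.
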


\begin{proof}
Let $P$ be a quadrilateral of $\tau$, with edges $e_{1},\ldots,e_{4}$
cyclically ordered. Recall that there is a preferred vertex $v$ of $P$, that
we use a cone structure on $P$ with $v$ as cone point, and that a simplicial
map from $F$ to $M$ sends each cone line in this cone structure to a geodesic
in $M$. The diagonal of $P$ that has one end at $v$ divides $P$ into two
triangles, and the restriction of $f$ to each triangle is a ruled $2$--simplex
in $M$. Suppose that $v$ is the vertex $e_{1}\cap e_{4}$, so that one triangle
contains $e_{1}$ and $e_{2}$, and the other triangle contains $e_{3}$ and
$e_{4}$. As $M$ is non-positively curved, the induced curvature on each
triangle is also non-positive and so the induced Riemannian area is no greater
than the Euclidean area of a triangle with the same edge lengths. The
Euclidean areas of the two triangles are at most $L_{1}L_{2}/2$ and
$L_{3}L_{4}/2$. We conclude that the Riemannian area of the restriction of $f$
to $P$ is at most $(L_{1}L_{2}+L_{3}L_{4})/2$, which is clearly at most one
half of the simplicial area of the restriction of $f$ to $P$. Now Lemma
\ref{E>Aforpolygons} implies that this is at most one half of the simplicial
energy of the restriction of $f$ to $P$. Summing over all the quadrilaterals
of $\tau$ immediately yields the inequality $\mbox{Area}(f)\leq E_{S}(f)/2$,
as required.
\end{proof}

Let $F$ be a closed surface with a decomposition $\tau$ into quadrilaterals
and a simplicial quasi-metric $l$. Let $f:(F,\tau,l)\rightarrow M$ be a
continuous map of $F$ to a negatively curved, closed Riemannian manifold $M$.
If $f$ is nontrivial, then Proposition \ref{uniqueharmonicmapsexist} states
that $f$ is homotopic to a unique simplicial harmonic map $f_{0}$.

When $f$ itself is a simplicial map, there is a canonical way to deform it to
the simplicial energy minimizing map $f_{0}$. The simplicial energy function
is determined locally by the images under $f$ of the vertices of $\tau$, and
thus is a real valued function on a finite dimensional manifold. The gradient
of this function determines a direction of deformation for the vertices of
$f(F)$. This gives rise to an energy decreasing flow $f_{t}$ whose derivative
at each $t$ equals the negative of the gradient of the simplicial energy
function. This flow moves the vertex images of $f_{t}(F)$ so as to decrease
simplicial energy as quickly as possible. We define the \emph{simplicial
energy gradient flow} to be this flow of the map $f$. Since there is a unique
critical point for simplicial energy in any homotopy class, the gradient flow
converges to $f_{0}$, the minimizing simplicial harmonic map homotopic to $f$.

We now show that the map $f_{0}$ obtained by minimizing energy among
simplicial maps homotopic to an initial map $f$ depends continuously on the
map $f$ and the simplicial metric $l$. This is a straightforward consequence
of the uniqueness of simplicial harmonic maps within a homotopy class. It was
shown in \cite{EellsLemaire} that smooth harmonic maps depend continuously on
the domain metric.

We use the term \emph{continuous family of maps $f_{s}:F\rightarrow M$ of a
surface $F$ to a manifold $M$ parametrized by a space }$\mathcal{S}$ to refer
to a continuous function $H:\mathcal{S}\times F\rightarrow M$, with
$f_{s}(x)=H(s,x)$, for each $s\in\mathcal{S}$ and $x\in F$.

\begin{proposition}
\label{gradientflow}Let $F$ be a closed surface with a decomposition $\tau$
into quadrilaterals, and let $M$ be a negatively curved closed Riemannian
manifold. Let $f_{s}:(F,\tau,l_{s})\rightarrow M$ be a continuous family of
nontrivial simplicial maps parametrized by a subspace $\mathcal{S}$ of
$\mathbb{R}^{n}$, where $l_{s}$ is a continuous family of simplicial
quasi-metrics. Then the gradient flow applied to each $f_{s}$ yields a
continuous family of simplicial maps $f_{s,t}:(F,\tau,l_{s})\rightarrow M$
parametrized by $\mathcal{S}\times\lbrack0,\infty)$, such that, for each $s$,
the map $f_{s,0}$ equals $f_{s}$, and the family of maps $f_{s,t}$ converges
to a simplicial harmonic map $f_{s,\infty}$. The simplicial harmonic maps
$f_{s,\infty}$ depend continuously on the initial map $f_{s}$ and on the
initial simplicial metric $l_{s}$.
\end{proposition}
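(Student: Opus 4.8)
The plan is to leverage the uniqueness of simplicial harmonic maps in each homotopy class (Proposition \ref{uniqueharmonicmapsexist}) together with the finite-dimensionality of the problem. Because the simplicial energy $E_S(f)$ of a simplicial map depends only on the images of the finitely many vertices of $\tau$ in $M$ (via the edge lengths $L_i$ computed as lengths of geodesic arcs in prescribed relative homotopy classes), and because the relative homotopy classes of the edges are locally constant in $s$, I would first set up the whole situation as a flow on a single finite-dimensional manifold. Fix a basepoint vertex $v$ and, as in the proof of Proposition \ref{uniqueharmonicmapsexist}, lift to the universal cover: a simplicial map homotopic to $f_s$ is determined by the point $\widetilde{f}_s(v)\in\widetilde M$ together with the (locally constant) combinatorial/homotopy data, and then by the images of all lifted vertices, which vary in $\widetilde M$. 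So the data of $f_{s}$ is encoded by a point $V(s)$ in $\widetilde M^{\,N}$ (one coordinate per vertex), depending continuously on $s$, and $E_S$ becomes a smooth function $\mathcal{E}: \widetilde M^{\,N}\times \mathcal{S}\to\RR$, smooth in the $\widetilde M^{\,N}$ variables and continuous in $s$ (indeed each $L_i^2$ is a smooth function of the two endpoint vertices, being the squared distance in a negatively curved manifold, away from the diagonal; nontriviality of $f_s$ keeps us away from degenerate configurations).

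Next I would run the negative gradient flow of $\mathcal{E}(\cdot,s)$ on $\widetilde M^{\,N}$ for each fixed $s$. Standard ODE theory (existence, uniqueness, and continuous dependence on initial conditions and parameters for solutions of a smooth/continuous-in-parameter vector field) gives a flow $V(s,t)$ with $V(s,0)=V(s)$, defined for all $t\ge 0$ because $\mathcal{E}$ is proper on sublevel sets — this properness is exactly the compactness of $\mathcal{F}_0$ established inside the proof of Proposition \ref{uniqueharmonicmapsexist}, which confines the vertices to a fixed compact region once the energy is bounded by its initial value $\mathcal{E}(V(s),s)$ (and this initial bound is itself continuous, hence locally bounded, in $s$). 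Projecting $V(s,t)$ back down to $M$ and reconstructing the simplicial maps from the cone structure on each quadrilateral yields the continuous family $f_{s,t}:(F,\tau,l_s)\to M$ parametrized by $\mathcal{S}\times[0,\infty)$ with $f_{s,0}=f_s$.

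Then I would argue convergence as $t\to\infty$. Since $\mathcal{E}(\cdot,s)$ is a smooth proper function whose only critical point on the relevant component of $\widetilde M^{\,N}$ is the unique minimizer corresponding to the simplicial harmonic map $f_{s,\infty}$ (uniqueness from Proposition \ref{uniqueharmonicmapsexist}, part (2); convexity of $E_S$ along straight homotopies as in that proof rules out other critical points), the gradient trajectory $V(s,t)$ converges to that unique critical point as $t\to\infty$. This defines $f_{s,\infty}$, which equals the energy-minimizing simplicial harmonic map homotopic to $f_s$.

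Finally, continuous dependence of $f_{s,\infty}$ on $s$ — equivalently on the initial map $f_s$ and the simplicial metric $l_s$ — is where the real work lies, and I expect this to be the main obstacle: continuous dependence of ODE solutions on parameters is only a finite-time statement, whereas $f_{s,\infty}$ is an infinite-time limit, so one cannot directly conclude continuity of the limit. The way around this is to use uniqueness rather than the flow: given $s_0$ and a sequence $s_n\to s_0$, the energies $\mathcal{E}(\cdot,s_n)$ are uniformly proper near $s_0$, so the minimizers $V(s_n,\infty)$ lie in a fixed compact set and hence subconverge, say to $V^*$. Using continuity of $\mathcal{E}$ in both variables and lower semicontinuity of energy (or simply that $\mathcal{E}(\cdot,s_n)\to\mathcal{E}(\cdot,s_0)$ uniformly on compacta), $V^*$ must be a critical point — in fact the minimizer — of $\mathcal{E}(\cdot,s_0)$, so $V^*=V(s_0,\infty)$ by uniqueness. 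Since every subsequential limit equals $V(s_0,\infty)$, the full sequence converges, giving continuity of $s\mapsto f_{s,\infty}$; reassembling the simplicial maps from this vertex data (the cone-line geodesics depending continuously on their endpoints and on $l_s$) completes the proof.
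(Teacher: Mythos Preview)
Your proposal is correct and follows essentially the same approach as the paper: both arguments use standard ODE continuous-dependence for the flow itself, and then handle the infinite-time limit $f_{s,\infty}$ separately via a compactness-plus-uniqueness argument (subsequential limits of the minimizers exist by the compactness in Proposition~\ref{uniqueharmonicmapsexist} and must agree with the unique harmonic map for the limiting metric). You are more explicit than the paper about encoding the problem on the finite-dimensional space $\widetilde M^{\,N}$ and about why finite-time ODE dependence does not directly give continuity of the $t\to\infty$ limit, but the underlying strategy is identical.
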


\begin{proof}
For fixed $l_{s}$, the simplicial harmonic map homotopic to $f_{s}%
:(F,\tau,l_{s})\rightarrow M$ is unique, so doesn't vary when $f_{s}$ is
changed by a homotopy. The path of maps $f_{s,t}:(F,\tau,l_{s})\rightarrow M$
defined by the gradient flow, for each fixed $s$, varies continuously with the
map, since each point is following the trajectory of a solution to a first
order differential equation whose initial conditions vary continuously.

It remains to show that the simplicial harmonic maps $f_{s,\infty}$ depend
continuously on the choice of simplicial metric $l_{s}$. Let $\{s_{n}\}$ be a
sequence of points in $\mathcal{S}$ converging to $s$ in $\mathcal{S}$, let
$l_{n}$ denote the metric associated to $s_{n}$, and let $l_{\infty}$ denote
the metric associated to $s$. Thus the sequence of simplicial quasi-metrics
$l_{n}$ converges to $l_{\infty}$. Arguing as in the proof of the first part
of Proposition \ref{uniqueharmonicmapsexist} shows that the corresponding
simplicial harmonic maps $f_{n}$ converge to a limiting map $f_{\infty}$ whose
simplicial energy is equal to the limiting value of the simplicial energies of
$f_{n}$. With respect to the simplicial metric $l_{\infty}$, the map
$f_{\infty}$ has the same simplicial energy as $f_{s,\infty}$, which is the
unique harmonic map in its homotopy class with simplicial metric $l_{\infty}$.
Therefore $f_{\infty}$ must be equal to $f_{s,\infty}$. This establishes that
the resulting simplicial harmonic maps vary continuously with the simplicial
metric, as claimed.
\end{proof}

Now we can apply the above result to show that we can homotope a family of
maps from $F$ to $M$ to arrange that there is a uniform bound on the
Riemannian area of the maps in the new family, where the bound depends only on
the Euler characteristic of $F$, and the curvature of $M$.

\begin{theorem}
\label{monotone} Let $F$ be a closed surface with Euler characteristic
$\chi<0$, and with a subdivision $\tau$ into quadrilaterals. Let
$f_{s}:F\rightarrow M$, $s\in{\mathcal{S}}$, be a continuous family of
simplicial maps of $F$ to a Riemannian manifold $M$ with sectional curvatures
bounded above by $-a<0$, where ${\mathcal{S}}$ is a path connected subset of
${\mathbb{R}}^{n}$. Further suppose that some, and hence every, $f_{s}$ is
nontrivial. Then, for each $s\in{\mathcal{S}}$, there is a simplicial
quasi-metric $l^{s}$ on $F$ which varies continuously with $s$, and a
continuous family of maps $g_{s,t}:(F,\tau,l^{s})\rightarrow M$ parametrized
by $\mathcal{S}\times I$, satisfying

\begin{enumerate}
\item $g_{s,0}=f_{s}$,

\item $g_{s,1}$ is a simplicial harmonic map,

\item the simplicial energy of $g_{s,t}$ is non-increasing with $t$,

\item $\mbox{Area}(g_{s,1})\leq2\pi\left\vert \chi\right\vert /a$, for every
$s$.
\end{enumerate}
\end{theorem}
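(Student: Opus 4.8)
The plan is to combine Proposition~\ref{gradientflow} with Lemma~\ref{E>standardA} and the area bound of Lemma~\ref{arealessthan2pichi}, with one extra ingredient: a continuously-varying \emph{choice} of quasi-metric $l^{s}$ chosen so that the harmonic map $g_{s,1}$ has controlled area. The subtlety is that the gradient flow of Proposition~\ref{gradientflow} decreases simplicial energy but not Riemannian area, and Lemma~\ref{E>standardA} only bounds area by \emph{half the simplicial energy}, which at $t=0$ could be large. So simply running the flow with the given metric does not immediately give (4). The fix is to shrink, in a controlled way, the edges that are being over-stretched, i.e. to replace $l^{s}$ by a quasi-metric adapted to the harmonic map itself.

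First I would recall from Lemma~\ref{arealessthan2pichi} (and the remark after it, which handles polygonal decompositions) that a \emph{conformal} simplicial harmonic map $h$ into $M$ satisfies $\mbox{Area}(h)\le 2\pi|\chi|/a$; more precisely, the Gauss--Bonnet argument there uses only that $h$ is simplicial, that interior curvature is $\le -a$ on the ruled cells, and that the convex hull property (Lemma~\ref{convexhull}) forces the angle sum at each vertex to be at least $2\pi$. In fact, for a simplicial harmonic map, $\mbox{Area}(h)\le A_{S}(h)$ whenever $h$ is simplicial conformal, and by the equality case of Lemma~\ref{E>Aforpolygons} we can \emph{force} $h$ to be conformal by choosing the quasi-metric to be the pullback quasi-metric $l_{i}=L_{i}(h)$ (so all stretch factors equal $1$, or $0$ on edges collapsed by $h$). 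The key point is that this pullback quasi-metric depends continuously on $h$, hence on $s$.

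So the construction proceeds in two stages. Stage one: apply Proposition~\ref{gradientflow} to the given family $f_{s}:(F,\tau,l_{s})\to M$ (here $l_{s}$ is the metric implicitly giving the ``simplicial'' structure of $f_{s}$, or any fixed continuous family) to obtain a continuous family of gradient-flow maps $f_{s,t}$ converging as $t\to\infty$ to the unique simplicial harmonic map $f_{s,\infty}$ in the homotopy class; this family depends continuously on $s$. Stage two: set $l^{s}$ to be the pullback quasi-metric of $f_{s,\infty}$, i.e.\ $l^{s}_{i}=$ length of $f_{s,\infty}|e_{i}$; this varies continuously with $s$ by the continuous dependence in Proposition~\ref{gradientflow}, and it is a genuine quasi-metric (the polygon inequalities hold since lengths of geodesic arcs around a cell satisfy them, with equality allowed). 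With respect to $l^{s}$ the map $f_{s,\infty}$ is simplicial harmonic (being simplicial and energy-critical is a property of the map, independent of which admissible metric we use, by the remark preceding Definition~\ref{defnofsimplicialmap} and the fact that the critical-point condition on vertex positions is metric-independent up to positive weights) and is simplicial conformal with all stretch factors $1$ (or $0$), so $E_{S}(f_{s,\infty})=A_{S}(f_{s,\infty})$; then Lemma~\ref{E>standardA} gives $\mbox{Area}(f_{s,\infty})\le E_{S}(f_{s,\infty})/2=A_{S}(f_{s,\infty})/2$, and the Gauss--Bonnet argument of Lemma~\ref{arealessthan2pichi} (polygonal version) gives the bound $2\pi|\chi|/a$. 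Finally, reparametrize: let $g_{s,t}$ for $t\in[0,1)$ be the gradient flow $f_{s, t/(1-t)}$ and $g_{s,1}=f_{s,\infty}$; this gives a continuous family with $g_{s,0}=f_{s}$, $g_{s,1}$ simplicial harmonic, simplicial energy non-increasing in $t$ (property of gradient descent), and the area bound (4).

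The main obstacle I anticipate is the bookkeeping at $t=1$: one must check that the reparametrized flow $g_{s,t}$ is genuinely continuous up to and including $t=1$ \emph{jointly} in $(s,t)$ — i.e.\ that the convergence $f_{s,t}\to f_{s,\infty}$ as $t\to\infty$ is locally uniform in $s$. This requires a uniform rate of convergence for the gradient flow, which follows from the strict convexity of the simplicial energy along straight homotopies (established inside the proof of Proposition~\ref{uniqueharmonicmapsexist}) together with a uniform lower bound, over $s$ in a compact subset, on the relevant Hessian; the compactness argument from the proof of part (1) of Proposition~\ref{uniqueharmonicmapsexist} supplies the needed uniform control. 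A secondary technical point is confirming that ``simplicial harmonic'' really is independent of the admissible quasi-metric chosen within the polygon-inequality constraints — but since the energy on a fixed cell is a positive-weighted sum of squared vertex-distances and the critical-point equations are $\sum_j w_j(x_i-x_{ij})=0$, changing $l^{s}$ only rescales the positive weights $w_j$, and a harmonic map for one set of positive weights that happens also to be conformal for the new weights stays harmonic; in our case $f_{s,\infty}$ is harmonic for $l_{s}$ and we have merely recorded the observation that it is \emph{also} the (unique) harmonic map for $l^{s}$, since it is the unique critical point in its homotopy class for the $l^{s}$-energy by the uniqueness in Proposition~\ref{uniqueharmonicmapsexist} applied with metric $l^{s}$.
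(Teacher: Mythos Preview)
There is a genuine gap, arising from an unnecessary detour through conformality that forces you to juggle two incompatible metrics. You correctly note early on that the Gauss--Bonnet argument of Lemma~\ref{arealessthan2pichi} needs only that the map be simplicial harmonic (so that the convex hull property of Lemma~\ref{convexhull} holds at vertices), not conformality; that observation already yields (4) for the harmonic endpoint $g_{s,1}$ and makes your stage-two metric change superfluous. But by nonetheless passing to the pullback $l^{s}$ of the harmonic \emph{limit} $f_{s,\infty}$, you break (2) and (3). The gradient flow in stage one decreases energy with respect to the stage-one metric $l_{s}$, not with respect to $l^{s}$, so (3) is not established for the metric named in the theorem. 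More seriously, your claim that $f_{s,\infty}$ is also harmonic for $l^{s}$ is unjustified and in general false: the vertex equations $\sum_{j}w_{j}(x-x_{j})=0$ depend on the weights $w_{j}$, and replacing $l_{s}$ by $l^{s}$ rescales these weights non-uniformly edge by edge. Invoking uniqueness of the $l^{s}$-harmonic map via Proposition~\ref{uniqueharmonicmapsexist} does not help, since that proposition gives no reason for that map to coincide with the $l_{s}$-harmonic map $f_{s,\infty}$; the justification you offer is circular.

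The paper's proof is a single stage: take $l^{s}$ to be the pullback of the \emph{initial} map $f_{s}$, i.e.\ $l^{s}_{i}=L^{s}_{i}$. This varies continuously with $s$ and gives $f_{s}$ finite energy (stretch factors all $0$ or $1$); now apply Proposition~\ref{gradientflow} with this one metric and reparametrize $[0,\infty)$ onto $[0,1)$. Properties (1)--(3) are then immediate from the gradient flow, and (4) follows from the proof of Lemma~\ref{arealessthan2pichi} applied to the harmonic limit $g_{s,1}$, with no conformality required.
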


\begin{proof}
For each simplicial map $f_{s}$ in the given family, we pick a simplicial
quasi-metric $l^{s}$ on $\tau$ by setting the edge length $l_{i}^{s}$ of
$e_{i}$ equal to the length $L_{i}^{s}$ of $f_{s}|e_{i}$. For this
quasi-metric, all the stretch factors $\sigma_{i}$ are equal to $1$ on all
edges with $l_{i}^{s}>0$. If $l_{i}^{s}=0$, then $f_{s}$ must map $e_{i}$ to a
point. Thus $f_{s}$ has finite simplicial energy. Now we apply Proposition
\ref{gradientflow}. This yields a continuous family of simplicial maps
$f_{s,t}:(F,\tau,l^{s})\rightarrow M$ parametrized by $\mathcal{S}%
\times\lbrack0,\infty)$, such that $f_{s,0}=f_{s}$. Further, for each $s$, the
function $E_{S}(f_{s,t})$ is a non-increasing function of $t$, and the family
of maps $f_{s,t}$ converges to a simplicial harmonic map $f_{s,\infty}$. Now
we use a homeomorphism between $[0,\infty)$ and $[0,1)$ to obtain a new family
of simplicial maps $g_{s,t}:(F,\tau,l^{s})\rightarrow M$ parametrized by
$\mathcal{S}\times I$, such that $g_{s,0}=f_{s,0}=f_{s}$, and $g_{s,1}%
=f_{s,\infty}$ is simplicial harmonic. This family is continuous by
Proposition \ref{gradientflow}. In particular, parts 1)-3) of the theorem hold.

Since $g_{s,1}$ is simplicial harmonic, the proof of Lemma
\ref{arealessthan2pichi} shows that $\mbox{Area}(g_{s,1})\leq2\pi\left\vert
\chi\right\vert /a$ for every $s$, so that part 4) of the theorem holds.
\end{proof}

The above theorem and its proof are very similar to what occurs in the smooth
setting. The crucial difference is that in the smooth setting, if
$f:F\rightarrow M$ is a map to a Riemannian manifold $M$, one can only define
the Riemannian metric on $F$ induced by $f$ when $f$ is an immersion. In
particular the energy of $f$ must be non-zero. But in the above theorem maps
with zero energy cause no problems, as we simply induce a quasi-metric on $F$.
This greatly simplifies applications, as we will see later in this section and
in the next section.

If we start with a family $f_{s}$ of smooth or piecewise-smooth maps from $F$
to $M$, we need to approximate it by a homotopic family of simplicial maps
before we can apply the above result. This is easy to do by just choosing a
subdivision $\tau$ of $F$ into quadrilaterals, and then replacing each $f_{s}$
by the unique simplicial map which agrees with $f_{s}$ on the vertices of
$\tau$.

Theorem~\ref{monotone} generalizes from $1$--parameter to $k$--parameter
families certain results of Thurston \cite{Thurston}, Bonahon \cite{Bonahon},
Canary \cite{Canary}, Minsky \cite{Minsky:1992} and Wolf \cite{Wolf}, that
utilized pleated surfaces and simplicial hyperbolic surfaces to control the
area and diameter of a $1$--parameter family of surfaces. See also the recent
use of bounded area $1$--parameter families of surfaces by Agol \cite{Agol}
and Calegari-Gabai \cite{CalegariGabai} in proving the tameness conjecture for
ends of negatively curved $3$--manifolds. Area bounds for $2$--parameter
families of surfaces obtained using smooth harmonic maps were used to find
counterexamples to the stabilization conjecture for Heegaard splittings in
\cite{HassThompsonThurston}. See the discussion at the end of this section.

Pitts and Rubinstein \cite{PittsRubinstein} gave a construction of unstable
minimal surfaces that utilizes a minimax argument applied to a family of
surfaces called a sweepout. For later applications we give below a modified
definition of a sweepout.

Given a closed orientable surface $F$, a closed orientable $3$--manifold $M$,
and a map $h:(F\times I,F\times\partial I)\rightarrow M$ with the property
that $\mbox{Area}(h(F,0))=\mbox{Area}(h(F,1))=0$, there is a natural
homomorphism
\[
\phi_{h}:H_{3}(F\times I,F\times\partial I)\rightarrow H_{3}(M).
\]
We now explain the construction of $\phi_{h}$.

Since $H_{3}(F\times I,F\times\partial I)$ is a cyclic group generated by the
fundamental class $\mu$ of $(F\times I,F\times\partial I)$, it suffices to
define the action of $\phi_{h}$ on $\mu$.

Let $\alpha$ denote a fundamental $3$--chain for $F\times I$, so that
$\partial\alpha=\beta_{1}-\beta_{0}$, where $\beta_{i}$ denotes a fundamental
$2$--cycle for $F\times\{i\}$, $i=0,1$. Thus each $h_{\#}(\beta_{i})$ is a
singular $2$--cycle in $M$ with zero volume. A theorem of Federer implies
that, for $i=0,1$, there is a singular $3$--chain $c_{i}$ in $M$ such that
$\partial c_{i}=h_{\#}(\beta_{i})$, and $c_{i}$ also has zero volume
\cite{Federer}. Thus the $3$--chain $c_{0}+h_{\#}(\alpha)-c_{1}$ is a cycle.
We want to define the homology class represented by this cycle to be $\phi
_{h}(\mu)$. We need to know that this is independent of the choices of the
$c_{i}$'s. Suppose that $c_{0}$ and $c_{0}^{\prime}$ are zero volume singular
$3$--chains in $M$ such that $\partial c_{0}=\partial c_{0}^{\prime}%
=h_{\#}(\beta_{i})$. Then $c_{0}-c_{0}^{\prime}$ is a zero volume $3$--cycle
in $M$, and a theorem of Federer implies this cycle must be null homologous.
It follows that our definition of $\phi_{h}(\mu)$ is independent of the choice
of $3$--chain $c_{0}$. Similarly it is also independent of the choice of
$3$--chain $c_{1}$. It follows that $\phi_{h}(\mu)$ is well defined.

\begin{definition}
Let $M$ be a closed orientable $3$--manifold, and let $F$ be a closed
orientable surface. A \emph{sweepout} of $M$ by $F$ is a map $h:F\times
I\rightarrow M$ with $\mbox{Area}(g(F,0))=\mbox{Area}(g(F,1))=0$, and inducing
an isomorphism
\[
\phi_{h}:H_{3}(F\times I,F\times\partial I)\rightarrow H_{3}(M).
\]

\end{definition}

\begin{remark}
One can equally well define a sweepout using homology with $\mathbb{Z}_{2}$ coefficients.
\end{remark}

There is a natural way to associate a sweepout to a Heegaard splitting of a
closed orientable $3$--manifold $M$. In a handlebody $H$, we can choose a
homotopy which shrinks the boundary surface $F$ onto a $1$--dimensional spine
$\Gamma$ of the handlebody, so that the homotopy is through embeddings except
at the end of the homotopy. Doing this for each of the two handlebodies of the
Heegaard splitting of $M$ yields a sweepout in which the surface
$F\allowbreak$ is the Heegaard surface and, as $t\rightarrow0$, the homotopy
crushes $F$ down to the chosen spine of one handlebody, and, as $t\rightarrow
1$, the homotopy crushes $F$ down to the chosen spine of the other handlebody.
Pitts and Rubinstein \cite{PittsRubinstein} considered a closed $3$--manifold
with a strongly irreducible Heegaard splitting. They then constructed a
minimal surface that has maximal area in some sweepout derived from the
splitting. In a hyperbolic manifold $M$, a minimal surface of genus $g$ has
area less than $2\pi(2g-2)$, and this implies the same area bound for each
surface in the sweepout.

The existence of a sweepout by bounded area surfaces has implications for the
geometry of $M$. It gives upper bounds on the injectivity radius, as noted in
Rubinstein \cite{Rubinstein} and in Bachman-Cooper-White \cite{BCW}.

We will show that one can use harmonic maps to obtain the same area bound
given by Pitts and Rubinstein, but without proving the existence of an
unstable minimal surface and without needing to assume that the Heegaard
splitting is strongly irreducible. The minimal surface obtained by Pitts and
Rubinstein is embedded, whereas harmonic maps need not yield embedded
surfaces, but embeddedness is not needed to establish results such as a bound
on injectivity radius.

The natural way to apply the theory of harmonic maps to a sweepout $h:F\times
I\rightarrow M$ is to apply Theorem \ref{monotone} or its smooth analogue in
order to obtain a sweepout by bounded area surfaces. In the smooth case the
difficulty is that at the ends of the sweepout, i.e. at $t=0$ and $t=1$, the
maps from $F$ to $M$ have $1$--dimensional image and so there is no way to
define the induced Riemannian metric on $F$ and one cannot directly use the
gradient flow. In \cite{HassThompsonThurston}, the authors showed how to work
round this difficulty. In the simplicial setting, this is not a problem, but a
new difficulty arises. It is not automatic that the maps from $F$ to $M$ at
the ends of the sweepout have zero energy. If they do not, then the gradient
flow applied to such maps may yield maps of non-zero Riemannian area, so that
we no longer have a sweepout. We resolve this difficulty in the following way.

First we consider maps from surfaces to graphs in general. Let $q:S^{1}\times
I\rightarrow I$ denote projection onto the second factor. Let $F$ be a closed
surface, and let $\Gamma$ be a graph.

\begin{definition}
A map $f:F\rightarrow\Gamma$ is \emph{of standard type} if there is a compact
subsurface $N$ of $F$, such that the following conditions hold:

\begin{enumerate}
\item Each component of $N$ is an annulus.

\item Each component of the closure of $M-N$ is sent by $f$ to a vertex of
$\Gamma$.

\item Each component annulus $A$ of $N$ is sent by $f$ to an edge of $\Gamma$,
and $f\mid A=S^{1}\times I$ is the composite of $q$ with the attaching map of
the edge.
\end{enumerate}
\end{definition}

Any map from $F$ to a graph $\Gamma$ is homotopic to a map of standard type.
For future reference, we set out this result and its proof.

\begin{lemma}
\label{standardtypeforonemap}Let $F$ be a closed surface, and let $\Gamma$ be
a graph. Then any map from $F$ to $\Gamma$ is homotopic to a map of standard type.
\end{lemma}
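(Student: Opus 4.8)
The plan is to choose a cell structure on $\Gamma$ (vertices and open edges), pull it back under a given map $f:F\rightarrow\Gamma$, and then clean up the resulting decomposition of $F$ by a sequence of standard homotopies until it has the required form. First I would homotope $f$ to be transverse to the midpoint $m_e$ of each edge $e$ of $\Gamma$; then $f^{-1}(m_e)$ is a closed $1$--manifold in $F$, i.e. a disjoint union of circles, and collecting these over all edges gives a disjoint collection $C$ of circles in $F$. The components of $F\setminus C$ map into the open stars of the vertices of $\Gamma$, which deformation retract to the vertices, so after a further homotopy supported away from $C$ each complementary component is sent into a small neighbourhood of a single vertex, and we may as well say it is sent close to a vertex. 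The circles in $C$ are the cores of a family of disjoint annular neighbourhoods $N$, and on each such annulus $f$ can be taken to factor through the projection to $I$ followed by the attaching map of the corresponding edge.

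The remaining work is to arrange that the complement of $N$ maps exactly to vertices and that $N$ is genuinely a disjoint union of annuli with the product structure described in condition (3). For the first point, I would use that each complementary region $R$ maps into the open star of a vertex $w$, which is contractible, so the map $f|R$ is homotopic rel $\partial R$ to the constant map at $w$; doing this for every region (the homotopies are supported in disjoint regions, hence compatible) yields a map collapsing $F\setminus N$ to vertices. For the second point, note that a priori a circle of $C$ might be inessential in $F$ or two parallel circles might bound an annulus that is redundant. An innermost inessential circle of $C$ bounds a disc $D$ in $F$ whose interior is disjoint from $C$; then $f(\partial D)$ traverses an edge and back, so $f|\partial D$ is null-homotopic in $\Gamma$ and we may push $D$ across to remove that circle from $C$, reducing $|C|$. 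Similarly, if two adjacent circles of $C$ cobound an annulus mapped to the same edge with cancelling orientations, a homotopy across that annulus removes both. After finitely many such reductions $C$ consists of circles cutting $F$ into vertex regions separated by essential annuli carrying edges, which is exactly a map of standard type.

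The main obstacle is bookkeeping the reduction step: one must check that after transversality the only local models near $C$ are the product annulus model, and that the innermost-disc and parallel-annulus moves strictly decrease a complexity (e.g. the number of components of $C$) so the process terminates. Transversality to the midpoints, together with the fact that $\Gamma$ is $1$--dimensional so preimages of regular values are $1$--manifolds, makes the local analysis routine; the termination is immediate from the finiteness of $|C|$. One should also note that this argument uses nothing about a metric on $\Gamma$ or on $F$, only the topology, so it applies verbatim whatever simplicial metric is later placed on $F$.
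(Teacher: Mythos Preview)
Your core argument---make $f$ transverse to one interior point of each edge, take tubular neighbourhoods of the resulting circles as $N$, and arrange the complement to map to vertices---is correct and is the same idea the paper uses. The paper packages the last step more economically: rather than performing separate homotopies on the annuli and on the complementary regions of $F$, it simply post-composes $f$ with a map $\phi:\Gamma\to\Gamma$ that stretches a small closed interval $E_\lambda$ around each chosen point homeomorphically onto the full edge $e_\lambda$ and crushes each component of $\Gamma\setminus\bigcup_\lambda E_\lambda$ to the vertex it contains. Since $\phi$ is evidently homotopic to the identity, $\phi\circ f$ is homotopic to $f$ and is already of standard type, with $N=f^{-1}\bigl(\bigcup_\lambda E_\lambda\bigr)$. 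This single move on the target replaces your two-stage homotopy on the domain and avoids the bookkeeping of matching up the homotopy on each annulus (which moves $\partial A$ from near the midpoint out to the vertices) with the rel-boundary homotopy on each region $R$.

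Your entire reduction paragraph is unnecessary and rests on a misreading of the definition. Standard type does not require the annuli in $N$ to be essential in $F$, nor does it forbid parallel or cancelling annuli; it only asks that $N$ be a disjoint union of annuli on which $f$ has the product form and that the complement go to vertices. Tubular neighbourhoods of disjoint simple closed curves are already disjoint annuli, so there is nothing further to arrange. (Your description of the innermost-disc move is also not right as stated: a circle of $C$ is mapped by $f$ to a single midpoint, not to a path that ``traverses an edge and back''.) Dropping that paragraph leaves a correct proof that is essentially the paper's, only carried out on the domain rather than on the target.
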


\begin{proof}
We denote the edges of $\Gamma$ by $e_{\lambda}$, $\lambda\in\Lambda$. The
first step is to choose a point $x_{\lambda}$ in the interior of each edge
$e_{\lambda}$ of $\Gamma$, and to homotope the map $f:F\rightarrow\Gamma$ to
be transverse to each $x_{\lambda}$. Let $C_{\lambda}$ denote $f^{-1}%
(x_{\lambda})$, so that $C_{\lambda}$ is a finite collection of disjoint
simple closed curves in $F$. Further there is a closed interval $E_{\lambda}$
in the interior of $e_{\lambda}$, and a homeomorphism of $f^{-1}(E_{\lambda})$
with $C_{\lambda}\times I$, such that the restriction of $f$ to each
$S^{1}\times I$ component of $f^{-1}(E_{\lambda})$ is the composite of $q$,
projection of $S^{1}\times I$ onto the second factor, with a homeomorphism
onto $E_{\lambda}$. Now consider a map $\phi:\Gamma\rightarrow\Gamma$ which,
for each edge $e_{\lambda}$ of $\Gamma$, maps $E_{\lambda}$ to $e_{\lambda}$
by a homeomorphism on their interiors and maps each component of $\Gamma-\cup
E_{\lambda}$ to the vertex of $\Gamma$ in that component. The composite
$\phi\circ f:F\rightarrow\Gamma$ is of standard type. As $\phi$ is clearly
homotopic to the identity, it follows that there is a homotopy of $f$ to a map
of standard type, as required.
\end{proof}

Next we consider constructing subdivisions into quadrilaterals. Pick a
subdivision of the circle $S^{1}$ into intervals. Taking the product with $I$
yields a subdivision of the annulus $S^{1}\times I$ into quadrilaterals, which
we call a standard subdivision.

Now given a map $f:F\rightarrow\Gamma$ of standard type, we choose a
subdivision $\tau$ of $F$ into quadrilaterals by starting with a standard
subdivision of each component of $N$, and then extending in any way to the
rest of $F$. We will say that such a subdivision is \textit{compatible} with
$f$.

Next let $l$ be the simplicial quasi-metric on $(F,\tau)$ induced by $f$, and
suppose that $\Gamma$ has a path metric. Thus we can consider the simplicial
energy $E_{S}(f)$ of $f:(F,\tau,l)\rightarrow\Gamma$. Let $P$ be a
quadrilateral of $\tau$. If $P$ is not contained in $N$, it is mapped by $f$
to a point and so contributes zero to $E_{S}(f)$. If $P$ is contained in $N$,
two of its edges are mapped to vertices of $\Gamma$ and the other two edges
are mapped onto edges of $\Gamma$. Further the two edges mapped to vertices
are non-adjacent edges of $P$. It follows that in this case also $P$
contributes zero to $E_{S}(f)$. We conclude that $E_{S}(f)=0$.

Before returning to our discussion of sweepouts, we need one further
refinement of Lemma \ref{standardtypeforonemap}. We consider a closed surface
$F$, a finite collection of graphs $\Gamma_{1},\ldots,\Gamma_{n}$, and maps
$f_{i}:F\rightarrow\Gamma_{i}$. Lemma \ref{standardtypeforonemap} tells us
that, for each $i$, we can homotope $f_{i}$ to be of standard type and can
then find a compatible subdivision $\tau_{i}$ of $F$ into quadrilaterals. The
following lemma shows how to find a single subdivision of $F$ into
quadrilaterals which is compatible with each $f_{i}$.

\begin{lemma}
\label{standardtypeforseveralmaps}Let $F$ be a closed surface, and, for $1\leq
i\leq n$, let $\Gamma_{i}$ be a graph, and let $f_{i}:F\rightarrow\Gamma_{i}$
be a continuous map. Then there is $g_{i}$ homotopic to $f_{i}$, and a single
subdivision of $F$ into quadrilaterals which is compatible with each $g_{i}$.
\end{lemma}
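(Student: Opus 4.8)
The plan is to reduce Lemma~\ref{standardtypeforseveralmaps} to Lemma~\ref{standardtypeforonemap} by first putting all the maps simultaneously into a suitable ``standard-type'' form and then building a single subdivision that sees all the relevant annuli at once. First I would apply the construction in the proof of Lemma~\ref{standardtypeforonemap} to each $f_i$ separately: pick a point $x_\lambda^{(i)}$ in the interior of each edge of $\Gamma_i$, homotope $f_i$ to $g_i$ which is transverse to all the $x_\lambda^{(i)}$, and obtain a collection $C^{(i)}$ of disjoint simple closed curves in $F$ together with a product neighbourhood $N_i \cong C^{(i)} \times I$ on which $g_i$ is the standard projection-type map, with everything outside $N_i$ sent to vertices of $\Gamma_i$. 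The key point that makes a \emph{simultaneous} choice possible is that these homotopies are independent of one another, since the $\Gamma_i$ are distinct target graphs; so I can perform all $n$ of them and obtain all $n$ annular families $N_1,\dots,N_n \subset F$ at the same time.

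The next step is a general position / isotopy argument to make the annuli $N_1,\dots,N_n$ mutually compatible. After a small isotopy of $F$ (which can be absorbed into further homotopies of the $g_i$), I may assume the curve systems $C^{(1)},\dots,C^{(n)}$ are in general position, and then, by the usual innermost-disc and bigon-removal moves available on a surface, that they pairwise intersect minimally or are even disjoint after homotoping the $g_i$; in any case I can arrange that the union $\bigcup_i \partial N_i$ of boundary curves, together with the core curves, forms a finite graph $G$ embedded in $F$. One then thickens $G$ slightly and takes the closure of its complement: this yields a decomposition of $F$ into a finite union of ``product'' pieces carrying the annular parts of each $N_i$ and a finite union of ``vertex'' pieces (subsurfaces with boundary) mapped by every $g_i$ to vertices. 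On each annular piece I use a standard subdivision (a product subdivision of $S^1 \times I$ into quadrilaterals as described before the statement), chosen finely enough and compatibly along shared boundary intervals, and then I extend over each vertex piece by any subdivision into quadrilaterals agreeing with the already-chosen subdivision on the boundary. A subdivision of a surface-with-boundary into quadrilaterals refining a given boundary subdivision always exists (e.g.\ triangulate and then do a conformal-type barycentric refinement pairing triangles into quadrilaterals, or simply quadrangulate directly), so this extension step is routine.

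Finally I would verify that the resulting single subdivision $\tau$ of $F$ into quadrilaterals is compatible with each $g_i$ in the precise sense required: for a fixed $i$, the subsurface $N_i$ is a union of the product pieces that $g_i$ maps onto edges of $\Gamma_i$, it is tiled by standard subdivisions, every component of the closure of $F - N_i$ is a union of pieces mapped by $g_i$ to a single vertex of $\Gamma_i$, and the restriction of $g_i$ to each annulus is the composite of the projection $q$ with the attaching map of the corresponding edge. Each of these is immediate from the construction. The step I expect to be the main obstacle is the middle one: arranging the finitely many annular systems $N_1,\dots,N_n$ to be in honest general position \emph{and} to fit together into a single polygonal pattern on $F$, since a priori the cores of $N_i$ and $N_j$ may intersect and the product structures may not align. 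The resolution is that we are free to homotope each $g_i$ as much as we like, and on a surface any two transverse simple closed curve systems can be homotoped (rel nothing) to meet minimally, so after these homotopies the overlaps are controlled and the common refinement into quadrilaterals can be built piece by piece; I would spell this out carefully but it involves only standard surface topology and no new ideas beyond Lemma~\ref{standardtypeforonemap}.
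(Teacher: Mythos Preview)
Your outline is essentially the paper's approach: put each $f_i$ into standard type, put the resulting curve systems $C_i$ in general position on $F$, and build a single quadrilateral subdivision starting from the annular neighbourhoods $N_i$ and extending over the complement. The one place where your write-up is vaguer than the paper is the treatment of the overlaps $N_i\cap N_j$. Your dichotomy into ``product pieces'' and ``vertex pieces mapped by every $g_i$ to vertices'' does not quite cover these: a region in $N_i\cap N_j$ carries \emph{two} incompatible product structures (one for $g_i$, one for $g_j$), and you cannot give it a single ``standard subdivision'' of $S^1\times I$ that serves both. The paper handles this cleanly by first arranging (via thin $N_i$, transversality, and no triple points) that each component of $N_i\cap N_j$ is a small disc bounded by two arcs of $\partial N_i$ and two arcs of $\partial N_j$; this disc is then \emph{declared} to be a single quadrilateral of $\tau$, and the standard subdivisions of the annuli $N_i$ are chosen afterwards so as to extend these already-placed quadrilaterals. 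With that order of construction the compatibility with every $g_i$ is automatic.

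Your suggestion to remove bigons and minimise intersections is harmless but unnecessary: transversality with no triple points is all that is used, and the intersection quadrilaterals absorb whatever crossings remain.
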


\begin{proof}
By applying Lemma \ref{standardtypeforonemap}, we can assume that each $f_{i}$
is of standard type. Choose one point in the interior of each edge of
$\Gamma_{i}$, and let $X_{i}$ denote the union of all these points. Thus
$f_{i}^{-1}(X_{i})$ is a finite collection of disjoint simple closed curves on
$F$. We can slightly homotope each $f_{i}$ to a new map $g_{i}$, also of
standard type, to arrange that the $g_{i}^{-1}(X_{i})$'s intersect each other
transversely and that there are no triple points. We let $C_{i}=g_{i}%
^{-1}(X_{i})$. For each $i$, we choose a regular neighborhood $N_{i}$ of
$C_{i}$ so that the following conditions hold:

\begin{enumerate}
\item The intersection of three distinct $N_{i}$'s is empty.

\item For distinct $i$ and $j$, the boundaries of $N_{i}$ and $N_{j}$ meet transversely.

\item For distinct $i$ and $j$, each component of $N_{i}\cap N_{j}$ is a disc
which contains exactly one point of $C_{i}\cap C_{j}$, exactly one sub-arc of
$C_{i}$ and exactly one sub-arc of $C_{j}$. The picture must be as shown in
Figure~\ref{strips}.
\end{enumerate}

\begin{figure}[ptbh]
\centering
\includegraphics[width=2in]{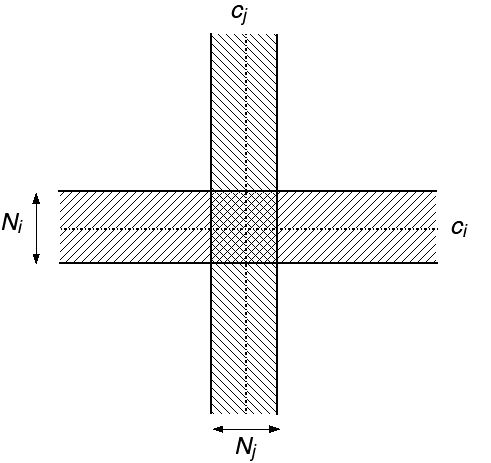} \caption{Intersection of $N_{i}$ and
$N_{j}$ }%
\label{strips}%
\end{figure}

Now we can construct a single subdivision $\tau$ of $F$ into quadrilaterals
which is simultaneously compatible with each $f_{i}$. For distinct $i$ and
$j$, consider a component of $N_{i}\cap N_{j}$. As in Figure~\ref{strips},
there is a natural way to regard this as a quadrilateral, with two edges in
$\partial N_{i}$ and two in $\partial N_{j}$. We start by choosing all these
quadrilaterals. Next we consider a component of $N_{i}$. This is an annulus,
and some pieces of this annulus have already been declared to be
quadrilaterals. It is now trivial to subdivide the rest so as to obtain a
standard subdivision of the annulus. Having made these choices for each $i$,
we will have a decomposition of the union of all the $N_{i}$'s. We now
complete this to a subdivision of $F$ in any way. It is easy to see that,
after a further homotopy, $\tau$ is compatible with each $g_{i}$, as required.
\end{proof}

Next we apply Theorem \ref{monotone} to sweepouts.

\begin{lemma}
\label{boundedarea} If $M$ is a closed orientable hyperbolic $3$--manifold
with a genus $g$ Heegaard splitting, then $M$ has a sweepout by genus $g$
surfaces with Riemannian area bounded above by $2\pi(2g-2)$.
\end{lemma}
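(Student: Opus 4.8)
The plan is to take the standard sweepout of $M$ coming from the given Heegaard splitting, replace it by a homotopic family of simplicial maps whose degenerate ends have zero simplicial energy, then run the energy gradient flow of Theorem~\ref{monotone}, and finally check that the resulting family is still a sweepout and obeys the stated area bound.

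Write $M=H_{0}\cup_{F}H_{1}$ with $F$ the genus $g$ Heegaard surface, and let $h:F\times I\rightarrow M$ be the sweepout obtained, as described above, by crushing each $H_{j}$ onto a spine $\Gamma_{j}$; thus $f_{0}:=h(\cdot,0)$ and $f_{1}:=h(\cdot,1)$ factor through the graphs $\Gamma_{0},\Gamma_{1}$, so $\mbox{Area}(f_{0})=\mbox{Area}(f_{1})=0$ and $\phi_{h}$ is an isomorphism. Since $M$ is hyperbolic it admits no Heegaard splitting of genus $\leq1$, so $g\geq2$ and $\chi(F)=2-2g\leq-2<0$. Moreover each $f_{s}$ is homotopic to $f_{0}$, and $\pi_{1}(F)\rightarrow\pi_{1}(H_{0})\rightarrow\pi_{1}(M)$ is onto (the first map because $\partial H_{0}\hookrightarrow H_{0}$ is $\pi_{1}$-surjective, the second by the amalgamated product description of $\pi_{1}(M)$), so the image of $(f_{s})_{*}$ is all of $\pi_{1}(M)$, which is not cyclic; hence every $f_{s}$ is nontrivial.

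First I would apply Lemma~\ref{standardtypeforseveralmaps} to the pair $f_{0}:F\rightarrow\Gamma_{0}$, $f_{1}:F\rightarrow\Gamma_{1}$, obtaining homotopic maps $g_{0},g_{1}$ of standard type and a single subdivision $\tau$ of $F$ into quadrilaterals compatible with both. The homotopies $f_{j}\simeq g_{j}$ may be taken within $\Gamma_{j}$, hence through maps of zero area; prepending $f_{0}\simeq g_{0}$ and appending $f_{1}\simeq g_{1}$ to $h$ gives a new map $h':F\times I\rightarrow M$ with $h'(\cdot,0)=g_{0}$, $h'(\cdot,1)=g_{1}$, which is still a sweepout since $\phi$ is unchanged under a homotopy through maps whose ends stay of zero area. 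Using the procedure described just after Theorem~\ref{monotone}, replace each $h'(\cdot,s)$ by the simplicial map $f_{s}:(F,\tau)\rightarrow M$ agreeing with $h'(\cdot,s)$ on the vertices of $\tau$; this is a continuous family of nontrivial simplicial maps parametrized by $\mathcal{S}=I\subset\mathbb{R}$. Because $\tau$ is compatible with the standard-type maps $g_{j}$, the quadrilateral-by-quadrilateral computation in the discussion preceding this lemma applies to $f_{0}$ and $f_{1}$ with their induced quasi-metrics and gives $E_{S}(f_{0})=E_{S}(f_{1})=0$.

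Now I would apply Theorem~\ref{monotone} with $a=1$ (as $M$ is hyperbolic). It produces continuously varying quasi-metrics $l^{s}$ and a continuous family $g_{s,t}:(F,\tau,l^{s})\rightarrow M$ with $g_{s,0}=f_{s}$, $g_{s,1}$ simplicial harmonic, $E_{S}(g_{s,t})$ nonincreasing in $t$, and $\mbox{Area}(g_{s,1})\leq2\pi\left\vert \chi\right\vert /a=2\pi(2g-2)$. Set $h''(\cdot,s):=g_{s,1}$. At $s=0$ we have $E_{S}(g_{0,0})=E_{S}(f_{0})=0$, so $E_{S}(g_{0,t})=0$ for all $t$, and Lemma~\ref{E>standardA} gives $\mbox{Area}(g_{0,t})=0$ for all $t$; likewise at $s=1$. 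Hence $h''$ has zero-area ends, and the homotopy $\{g_{s,t}\}$ from $h'$ to $h''$ keeps the ends of zero area throughout, so $\phi_{h''}=\phi_{h'}=\phi_{h}$ is an isomorphism. Therefore $h''$ is a sweepout of $M$ by genus $g$ surfaces of Riemannian area at most $2\pi(2g-2)$, as required. The one point that needs real care — and the reason Lemmas~\ref{standardtypeforonemap} and \ref{standardtypeforseveralmaps} are in place — is keeping the flow from spoiling the sweepout by assigning positive area to the surfaces at $t=0,1$; everything else is assembling Theorem~\ref{monotone}, Lemma~\ref{E>standardA}, and the homotopy invariance of $\phi_{h}$, the latter resting on a prism-operator chain homotopy together with the fact that a one-parameter family of zero-area maps sweeps out zero $3$-volume.
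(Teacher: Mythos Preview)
Your argument is essentially the paper's own: pick a compatible quadrilateral subdivision so the end maps have zero simplicial energy, straighten to simplicial maps, run the gradient flow of Theorem~\ref{monotone}, and observe that the ends stay put. One small slip: the family $\{g_{s,t}\}$ starts at the simplicial family $(f_s)$, not at $h'$, so you still owe a word on why the simplicial-straightening step $h'\rightsquigarrow(f_s)$ preserves $\phi$. The paper closes this by first subdividing the spines $\Gamma_0,\Gamma_1$ so that each edge is a geodesic segment of $M$; then the standard-type maps $g_0,g_1$ are already simplicial, the straightening leaves them literally unchanged, and the passage from $h'$ to $(f_s)$ is a homotopy rel $F\times\partial I$, so the sweepout property is immediate. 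With that one-line fix your proof and the paper's coincide.
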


\begin{proof}
Let $F$ denote the Heegaard surface of $M$, and let $H_{0}$ and $H_{1}$ denote
the two handlebodies into which $F$ cuts $M$. For $i=0,1$, pick a
$1$--dimensional spine $\Gamma_{i}$ for $H_{i}$. By subdividing the edges of
these graphs, we can arrange that each edge is a geodesic segment in $M$. Then
there is a sweepout $g:F\times I\rightarrow M$ of $M$ by $F$ such that $g_{0}$
collapses $F$ to $\Gamma_{0}$, and $g_{1}$ collapses $F$ to $\Gamma_{1}$. For
each $i=0$ or $1$, there is a map $f_{i}:F\rightarrow\Gamma_{i}$, which is of
standard type, and is homotopic to $g_{i}$. We homotope the given sweepout $g$
so that $g_{0}=f_{0}$ and $g_{1}=f_{1}$. The above discussion yields a
subdivision $\tau$ of $F$ into quadrilaterals which is compatible with both
$g_{0}$ and $g_{1}$, perhaps after further small homotopies of $g_{0}$ and
$g_{1}$. As $g_{0}$ and $g_{1}$ are now of standard type, and the edges of
$\Gamma_{0}$ and $\Gamma_{1}$ are geodesic, it follows that $g_{0}$ and
$g_{1}$ are simplicial. Now we simultaneously homotope each $g_{t}$ to the
simplicial map which agrees with $g_{t}$ on the vertices. This will not alter
$g_{0}$ and $g_{1}$, so these maps still have zero simplicial energy with
respect to the induced simplicial quasi-metric. Finally we can apply the proof
of Theorem \ref{monotone} to this sweepout by simplicial maps to obtain a
$1$--parameter family of simplicial harmonic maps. The crucial fact is that as
the energies of $g_{0}$ and $g_{1}$ are each zero, the gradient flow leaves
them unchanged. Thus we have homotoped the original sweepout, through
sweepouts, to a sweepout by simplicial harmonic maps, each of which has
Riemannian area less than $2\pi(2g-2)$, by Lemma \ref{E>standardA}.
\end{proof}

The Scharlemann-Thompson genus of $M$ can be used in place of the Heegaard
genus $g$ in Corollary~\ref{boundedarea}, and may be smaller
\cite{ScharlemannThompson:94}.

We end this section by discussing how to replace the use of smooth harmonic
maps by simplicial harmonic maps in the work of Hass, Thompson and Thurston
\cite{HassThompsonThurston}. In that paper, the authors considered a
particular closed hyperbolic $3$--manifold $M$ with a Heegaard splitting
$M=H_{0}\cup H_{1}$ and Heegaard surface $F$. They showed that there could not
be an ambient isotopy of $M$ which interchanged $H_{0}$ and $H_{1}$, and a key
role in their argument was played by applying the gradient flow to a
$2$--parameter family of smooth maps from $F$ to $M$. We will describe how to
replace this part of their argument by the work in this paper.

As in the proof of Lemma \ref{boundedarea}, for $i=0,1$, we pick a
$1$--dimensional spine $\Gamma_{i}$ for $H_{i}$, so that each edge of
$\Gamma_{i}$ is a geodesic segment in $M$. As in that lemma, we can find a
sweepout $h:F\times I\rightarrow M$ by simplicial harmonic maps such that, for
$i=0,1$, the map $h_{i}$ sends $F$ to $\Gamma_{i}$ and is of standard type. In
particular $h_{0}$ and $h_{1}$ each has zero simplicial energy.

Next suppose that $\Phi:M\times I\rightarrow M\times I$ is an ambient isotopy
of $M$ which interchanges $H_{0}$ and $H_{1}$. Thus $\Phi$ is a homeomorphism,
$\Phi_{0}$ is the identity map of $M$, and $\Phi_{1}(H_{0})=H_{1}$ and
$\Phi_{1}(H_{1})=H_{0}$. Let $p:M\times I\rightarrow M$ denote projection.
Then the composite map $F\times I\times I\overset{h\times1}{\rightarrow
}M\times I\overset{\Phi}{\rightarrow}M\times I\overset{p}{\rightarrow}M$ is a
$2$--parameter family of maps $h_{s,t}$ from $F$ to $M$, with $h_{s,0}=h_{s}$,
for all $s$. As $h_{0}$ collapses $F$ to the spine $\Gamma_{0}$ of $H_{0}$,
the map $h_{0,t}$ collapses $F$ to the graph $\Phi_{t}(\Gamma_{0})$ embedded
in $M$. Further each map $h_{0,t}$ is of standard type. Similarly each map
$h_{1,t}$ collapses $F$ to the graph $\Phi_{t}(\Gamma_{1})$ embedded in $M$,
and is of standard type. Note that as $\Phi_{1}$ interchanges $H_{0}$ and
$H_{1}$, we have $\Phi_{1}(\Gamma_{0})$ is a spine of $H_{1}$ and $\Phi
_{1}(\Gamma_{1})$ is a spine of $H_{0}$.

The next step is to replace each map $h_{s,t}$ by the unique simplicial map
which agrees with $h_{s,t}$ on the vertices of $\tau$. As each $h_{s}$ is
already simplicial, the maps $h_{s,0}=h_{s}$ do not change.\ The maps
$h_{0,t}$ and $h_{1,t}$ may change, but they will continue to have zero
simplicial energy. This is because any edge of $\tau$ which was mapped to a
point before the change will continue to be mapped to a point after the
change. Thus the gradient flow will leave all the maps $h_{0,t}$ and $h_{1,t}$
unchanged. Hence when we apply Theorem \ref{monotone} to this $2$--parameter
family of maps, we will obtain a family $f_{s,t}$ of harmonic maps, each with
Riemannian area less than $2\pi\left\vert \chi(F)\right\vert $, such that all
the maps $f_{0,t}$ and $f_{1,t}$ have zero simplicial energy and hence zero
Riemannian area.

In \cite{HassThompsonThurston}, the authors considered hyperbolic
$3$--manifolds of fixed Heegaard genus which have arbitrarily high volume.
This implies that a sweepout $h:F\times I\rightarrow M$ of $M$ by $F$ in which
each surface has Riemannian area less than $2\pi\left\vert \chi(F)\right\vert
$, must be ``long'' in a certain sense. They show that this implies that in
any $2$--parameter family that flips the orientation of the Heegaard
splitting, there must be a surface of large area. This in turn was used to
show that it is not possible to connect such a Heegaard sweepout to the same
sweepout with opposite orientation without stabilizing until the genus is doubled.

\section{Sweepouts in higher dimensions}

\label{genus}

In this section we define the genus of an $n$--dimensional closed orientable
manifold, extending the idea of the Heegaard genus of a $3$--manifold to
closed manifolds of arbitrary dimension. To do so we extend the definition of
a sweepout of a $3$--manifold by a $1$--parameter family of surfaces to a
sweepout of a closed $n$--dimensional manifold $M$ by an $(n-2)$--parameter
family of surfaces. We replace the unit interval $I$ in a Heegaard sweepout
with an arbitrary compact $(n-2)$--dimensional manifold which parametrizes the
surface maps.

For this construction we let $X$ be the orientable total space of a bundle
over a compact $(n-2)$--manifold $B$ with fiber a closed orientable surface
$F$, with projection map $\pi:X\rightarrow B$, and with $\partial B$ possibly
empty. We will consider an appropriately defined degree one map
$h:X\rightarrow M$ such that, for all $b\in\partial B$,%
\[
Area(h(\pi^{-1}(b)))=0.
\]
As before, we need to define a homomorphism
\[
\phi_{h}:H_{n}(X,\partial X)\rightarrow H_{n}(M)
\]
to make sense of the notion of degree for such maps. Again note that we could
also use homology with $\mathbb{Z}_{2}$ coefficients.

Since $H_{n}(X,\partial X)$ is a cyclic group generated by the relative
fundamental class $\mu$, a homomorphism $\phi_{h}$ to $H_{n}(M)$ is determined
by the action of $\phi_{h}$ on $\mu$. So long as $h$ satisfies some mild
smoothness condition, the $(n-1)$--volume of $h(\partial X)$ will equal zero
because $Area(h(\pi^{-1}(b)))=0$ for $b\in\partial B$. So $h(\partial X)$
gives a zero volume $(n-1)$--chain in $M$ which in turn is the boundary of a
zero volume $n$--chain $c$ in $M$. Thus $h_{\#}(\mu)+c$ is a $n$--cycle in $M$
whose homology class $\alpha\in H_{n}(M)$ is independent of the choice of $c$.
We set $\phi_{h}(\mu)=\alpha$. This leads to the following definition.

\begin{definition}
A \emph{sweepout} of a closed orientable $n$--manifold $M$ by a closed
orientable surface $F$ is a map $h:X\rightarrow M$ from a bundle $X$ with
orientable total space, fiber $F$, and base space a compact $(n-2)$--manifold
$B$, whose boundary may be empty, such that

\begin{enumerate}
\item $Area(h(\pi^{-1}(b)))=0$, for all $b\in\partial B$, and the
$(n-1)$--volume of $h(\partial X)$ also equals zero, and

\item $h$ induces an isomorphism%
\[
\phi_{h}:H_{n}(X,\partial X)\rightarrow H_{n}(M).
\]

\end{enumerate}
\end{definition}

\begin{definition}
If $M$ is a closed orientable $n$--manifold which admits a sweepout by genus
$g$ surfaces, for some $g$, then the \emph{genus} of $M$ is the smallest such
number $g$.
\end{definition}

For $3$--manifolds this number is clearly less than or equal to the Heegaard
genus, since the above definition allows non-embedded and non-disjoint
surfaces to sweep over the $3$--manifold. In addition, it allows the
consideration of sweepouts by bundles over a circle, as well as over an
interval. We do not know whether or not equality always holds for a given $B$.

Here are some examples of $n$--manifolds which admit a sweepout by surfaces.
Our first examples are simply the product of a $3$--manifold with another
manifold. Let $Y$ be a closed $(n-3)$--dimensional manifold, let $Z$ be a
closed $3$--manifold, and let $M=Y\times Z$. Choose a sweepout $h:I\times
F\rightarrow Z$ of $Z$ by a closed surface $F$, let $B=Y\times I$, and let
$X=B\times F$. Then the map $X=B\times F=Y\times(I\times F)\overset
{(1,h)}{\rightarrow}Y\times Z=M$ is a sweepout of $M$ by $F$.

Our next examples will play an important role in our discussion below. Let $B$
be a compact $(n-2)$--dimensional manifold, and let $F$ be a closed orientable
surface of genus $g$. Let $X$ be a bundle over $B$ with fiber $F$ whose
restriction to $\partial B$ is trivial. Choose a homeomorphism $f$ from $F$ to
the boundary of a handlebody $H$, and let $g$ denote the homeomorphism
$f\times1:F\times\partial B\rightarrow\partial H\times\partial B$. We form a
closed $n$--manifold $M$ by attaching $X$ to $H\times\partial B$ using the
homeomorphism of their boundaries $\partial X=F\times\partial B\overset
{g}{\rightarrow}\partial H\times\partial B=\partial(H\times\partial B)$. As in
the previous section, there is a homotopy which shrinks the boundary surface
$F$ of the handlebody $H$ onto a $1$--dimensional spine $\Gamma$ of $H$, such
that the homotopy is through embeddings except at the end of the homotopy.
Taking the product of this homotopy with the identity map of $B$ yields a map
$(I\times\partial B)\times F\rightarrow\partial B\times H$ which, for each $b$
in $\partial B$, shrinks $\{b\}\times F$ onto $\{b\}\times\Gamma$. Together
with the identity map of $X$, this yields a map from $X\cup((I\times\partial
B)\times F)$ to $M$ which is clearly a sweepout of $M$ by $F$. Of course
$X\cup((I\times\partial B)\times F)$ is homeomorphic to $X$, so we can and
will regard this sweepout as a map from $X$ to $M$. This sweepout has the
special property that for all interior points of $B$, the associated map from
$F$ to $M$ is an embedding, and that for all boundary points of $B$, the
associated map from $F$ to $M$ has image a copy of the graph $\Gamma$. If
$\partial B$ is not connected, one can make this construction slightly more
general by separately choosing a homeomorphism from $F$ to the boundary of the
handlebody $H$ for each component of $\partial B$.

When $B$ is the $(n-2)$-ball, the bundle must be trivial over $\partial B$.
These \emph{$n$-dimensional Heegaard splittings} therefore fall into this
class of examples.

Now suppose that we have a sweepout $h:X\rightarrow M$ of a closed orientable
$n$--manifold $M$ by a closed orientable surface $F$. In addition suppose that
$M$ has a metric of negative curvature. We would like to apply the ideas of
the previous section to show that $M$ admits a sweepout by small area maps
from $F$ to $M$. As in that section, the key to doing this is being able to
find a sweepout such that, for each $b$ in $\partial B$, the associated map
from $F$ to $M$ has zero energy. The result we obtain is the following.

\begin{theorem}
Let $M$ be a closed Riemannian $n$--manifold with sectional curvatures bounded
above by $-a<0$, let $F$ be a closed orientable surface of genus $g\geq2$, and
let $\tau$ be a subdivision of $F$ into quadrilaterals. Suppose that
$h:X\rightarrow M$ is a sweepout of $M$ by $F$ such that, for each $b$ in
$\partial B$, the associated map $F\rightarrow M$ is simplicial and has zero
simplicial energy with respect to the induced quasi-metric on $(F,\tau)$. Then
$h$ can be homotoped, fixing $\partial X$, to a new sweepout so that, for each
$b$ in $B$, the associated map from $F$ to $M$ has Riemannian area at most
$2\pi(2g-2)/a$.
\end{theorem}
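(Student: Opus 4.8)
The plan is to run the argument of Theorem \ref{monotone}, with the parameter space $\mathcal{S}\subseteq\mathbb{R}^{n}$ replaced by the total space $X$ of the bundle $\pi\colon X\rightarrow B$. The only new features are that the construction must be carried out fiberwise over a bundle that may be nontrivial, and that one must check that the fiber maps are nontrivial; both are handled easily with the machinery already in place. Throughout, for $b\in B$ I write $h_{b}\colon F\rightarrow M$ for the map associated to $h$ on the fiber over $b$.

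First I would homotope $h$, fixing $\partial X$, so that every $h_{b}$ is simplicial: replace $h_{b}$ by the unique simplicial map agreeing with it on the vertices of $\tau$, first straightening each edge to the geodesic in its relative homotopy class, then straightening the cone lines of each quadrilateral. This varies continuously with $b$, since the relative homotopy class of $h_{b}$ on each edge is locally constant in $b$ and $M$ is non-positively curved, and it leaves the maps over $\partial B$ unchanged, as these are already simplicial by hypothesis. For each $b$, equip $(F,\tau)$ with the simplicial quasi-metric $l^{b}$ induced by $h_{b}$, namely $l_{i}^{b}=L_{i}^{b}$, the length of $h_{b}|e_{i}$; the polygon inequalities hold because the edges of each quadrilateral bound a loop in $F$ whose image bounds a loop in $M$. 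With respect to $l^{b}$ every nonzero stretch factor of $h_{b}$ equals $1$, so $h_{b}$ has finite simplicial energy, and for $b\in\partial B$ this energy is $0$ by hypothesis.

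The crucial point is that the simplicial energy gradient flow of Proposition \ref{gradientflow} is canonical: it moves the images of the vertices of $\tau$ so as to decrease $E_{S}=\sum w_{i}L_{i}^{2}$ as rapidly as possible, and since neither $E_{S}$ nor its gradient refers to any ordering or labeling of the cells of $\tau$, the flow is well defined on all of $X$ even when the monodromy of the bundle permutes the cells. Applying it fiberwise produces a continuous family $h_{b,t}$, $t\in[0,\infty)$, with $h_{b,0}=h_{b}$, with $E_{S}(h_{b,t})$ non-increasing in $t$, converging as $t\rightarrow\infty$ to a simplicial harmonic map $h_{b,\infty}$; continuous dependence of the limit on $b$ is exactly the content of Proposition \ref{gradientflow}, applied in local trivializations of the bundle over $B$. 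Here one needs each $h_{b}$ to be nontrivial, so that its simplicial harmonic representative is unique by Proposition \ref{uniqueharmonicmapsexist}. If some interior $h_{b}$ were trivial it would factor through a map of a circle into $M$ (as $\pi_{2}(M)=0$), so its image would be at most $1$--dimensional; using the mild smoothness of $h$, the image of all of $X$ would then have $n$--dimensional measure zero, contradicting the fact that $\phi_{h}$ is an isomorphism onto $H_{n}(M)\neq 0$. Since triviality of a map $F\rightarrow M$ is preserved under precomposition with a homeomorphism of $F$, and $B$ is connected, nontriviality of one fiber map forces it for all. Reparametrizing $[0,\infty)$ as $[0,1)$ and adjoining the limiting map at $1$ gives a continuous family $g_{b,t}$ on $X\times[0,1]$ with $g_{b,0}=h_{b}$ and $g_{b,1}=h_{b,\infty}$ simplicial harmonic.

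To finish, note that for $b\in\partial B$ the map $h_{b}$ already has zero simplicial energy, hence is simplicial harmonic, so the gradient flow fixes it and $g_{b,t}=h_{b}$ for all $t$. Thus $g$ is a homotopy of $h$ fixing $\partial X$; in particular $g_{\cdot,1}$ is again a sweepout, since $\phi_{h}$ is unchanged when $h|\partial X$ is fixed (the same filling chains may be reused in its definition). For every $b\in B$ the map $g_{b,1}$ is simplicial harmonic, so the proof of Lemma \ref{arealessthan2pichi}, together with the remark following it which covers quadrilateral decompositions, bounds its Riemannian area by $2\pi|\chi(F)|/a=2\pi(2g-2)/a$, as desired. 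I expect the only genuine subtleties to be the two flagged above: that the gradient flow, being independent of all labelings of $\tau$, really does descend to the possibly nontrivial bundle $X$, and the short argument that every fiber map is nontrivial.
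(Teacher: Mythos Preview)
Your proof follows the same route as the paper's: apply Theorem \ref{monotone} fiberwise (using local triviality when the bundle is nontrivial) and note that the zero-energy fibers over $\partial B$ are fixed by the gradient flow, so the end result remains a sweepout with the stated area bound. You spell out more than the paper does---in particular the nontriviality of the fiber maps, which the paper simply inherits as a hypothesis of Theorem \ref{monotone}---though your argument there needs the small correction that a trivial $h_{b}$ factors through a circle only \emph{after} homotopy, so one must first homotope the family to have fiberwise one-dimensional image before invoking the measure-zero contradiction.
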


\begin{proof}
We will use the same ideas as in the proof of Lemma \ref{boundedarea} in the
case of dimension $3$.

If $X$ is a trivial bundle over $B$, we can simply apply
Theorem~\ref{monotone} to homotope the sweepout to a family of simplicial
harmonic maps from $F$ to $M$. This homotopy will be fixed for fibers over
$\partial B$, so still gives a degree one map from $X$ to $M$, and is
therefore still a sweepout. Theorem~\ref{monotone} also tells us that in this
new sweepout the area of each surface is less than $2\pi(2g-2)/a$, as required.

If $X$ is a nontrivial bundle over $B$, it is still locally trivial, and that
suffices to apply Theorem~\ref{monotone}.
\end{proof}

Given a sweepout, it is not obvious that one can find a subdivision into
quadrilaterals that satisfies the hypotheses of the above theorem. But it is
easy to do this in the cases discussed just before the theorem. As in the
second set of examples, we let $B$ be a compact $(n-2)$--dimensional manifold,
let $F$ be a closed orientable surface of genus $g$, and let $X$ be a bundle
over $B$ with fiber $F$ whose restriction to $\partial B$ is trivial. Finally
let $M$ be a closed $n$--manifold formed by attaching $X$ to $H\times\partial
B$ using a product homeomorphism of their boundaries. Then there is a sweepout
$h:X\rightarrow M$ such that, for all interior points of $B$, the associated
map from $F$ to $M$ is an embedding, and, for all boundary points of $B$, the
associated map from $F$ to $M$ has image a graph which depends only on the
component of $\partial B$. Further, for a component $C$ of $\partial B$, the
maps from $F$ to the graph $\Gamma$ associated to the points of $C$ are all
equal. As $\partial B$ has only finitely many components, Lemma
\ref{standardtypeforseveralmaps} tells us that after a suitable homotopy of
$h$, there is a single subdivision $\tau$ of $F$ into quadrilaterals which is
compatible with all of these maps. Now it is immediate that, for each $b$ in
$\partial B$, the associated map from $F$ to $M$ has zero simplicial energy
with respect to the induced quasi-metric on $(F,\tau)$. If we assume that $M$
has negative curvature, then we can homotope this sweepout to the sweepout by
simplicial maps which agrees with the original sweepout on the vertices of
$\tau$. Thus, for each $b$ in $\partial B$, the associated simplicial map from
$F$ to $M$ has zero simplicial energy with respect to the induced quasi-metric
on $(F,\tau)$, verifying all the hypotheses of the above theorem.

We can generalize the previous discussion as follows. Let $M$ be as in the
previous paragraph, but do not assume that $M$ has negative curvature. Suppose
that there a map $k$ of degree $1$ from $M$ to some negatively curved
$n$--manifold $M^{\prime}$. Then the composite map $k\circ h:X\rightarrow
M^{\prime}$ is a sweepout such that, for each $b$ in $\partial B$, the
associated map from $(F,\tau,l)$ to $M^{\prime}$ has zero simplicial energy.
Further, we can homotope this sweepout to the sweepout by simplicial maps
which agrees with the original sweepout on the vertices of $\tau$, again
verifying all the hypotheses of the above theorem.

\section{Varying the simplicial metric}

\label{limits}

In this section we examine global minimizers of simplicial energy over the
space of all simplicial metrics for a fixed triangulation. It was known to
Courant and Rado that one way to find a least area map in a homotopy class is
to find a map that minimizes energy not just for a given metric on the domain
but among all possible metrics \cite{Courant, Rado}. We now present this
argument in the simplicial setting, where it becomes considerably easier.

\begin{lemma}
Let $f:F\rightarrow M$ be a map of a triangulated closed surface $F$ to a
closed manifold $M$ with a Riemannian metric. Let $E_{S}(f,l)$ denote the
simplicial energy of $f$ with respect to a simplicial quasi-metric $l$ on $F$.

\begin{enumerate}
\item If $A_{S}(f)$ has least possible value among all maps homotopic to $f$,
then there is a simplicial quasi-metric $l$ on $F$ such that $E_{S}(f,l)$ has
the least possible value among all maps homotopic to $f$ and over all
simplicial quasi-metrics on $F$. Further $E_{S}(f,l)=A_{S}(f)$.

\item If $l$ is a simplicial quasi-metric on $F$, and $f$ is a map such that
$E_{S}(f,l)$ has least possible value among all maps homotopic to $f$ and over
all simplicial quasi-metrics on $F$, then $A_{S}(f)$ also has the least
possible value, and $E_{S}(f,l)=A_{S}(f)$.
\end{enumerate}
\end{lemma}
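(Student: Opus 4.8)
The plan is to reduce both parts to a single observation: for every map $g:F\to M$ whose restriction to the $1$--skeleton is rectifiable and each of whose edges is sent to a length-minimizing arc in its relative homotopy class, the minimum of $E_{S}(g,l)$ over all simplicial quasi-metrics $l$ on $(F,\tau)$ equals $A_{S}(g)$. The inequality $E_{S}(g,l)\ge A_{S}(g)$ for every $l$ is exactly Lemma~\ref{E>A}. For the reverse inequality I would exhibit a quasi-metric attaining equality, namely the \emph{induced} quasi-metric $l_{i}:=L_{i}$, where $L_{i}$ is the length of $g|e_{i}$. This is a legitimate quasi-metric: since each edge is length-minimizing, the image of any edge $e_{i}$ of a $2$--simplex $T$, being homotopic rel endpoints to a path running along the other two edges of $T$ of total length $L_{j}+L_{k}$, satisfies $L_{i}\le L_{j}+L_{k}$. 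And with this choice every stretch factor equals $1$ on the edges of $\tau-\tau_{0}$ and $0$ on the edges of $\tau_{0}$, so the equality clause of Lemma~\ref{E>A} gives $E_{S}(g,l)=A_{S}(g)$ (equivalently, $(\sigma_{i}^{2}+\sigma_{j}^{2})l_{i}l_{j}=2L_{i}L_{j}$ term by term). I also record that any map can be homotoped rel vertices so that its edges become length-minimizing --- such arcs exist because $M$ is compact --- and that this homotopy does not increase any $L_{i}$, hence does not increase $A_{S}$.

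Granting this, set $\mathcal{A}:=\inf\{\,A_{S}(g):g\simeq f\,\}$ for the least simplicial area in the homotopy class of $f$, and $\mathcal{E}:=\inf\{\,E_{S}(g,l):g\simeq f,\ l\text{ a simplicial quasi-metric on }(F,\tau)\,\}$ for the least simplicial energy over maps and metrics jointly. On the one hand $E_{S}(g,l)\ge A_{S}(g)\ge\mathcal{A}$ for all $g,l$ shows $\mathcal{E}\ge\mathcal{A}$. On the other hand, given any $g\simeq f$, first homotope its edges to be length-minimizing (not increasing $A_{S}(g)$) and then use the induced quasi-metric; by the observation this produces a pair with energy $A_{S}(g)$, so $\mathcal{E}\le A_{S}(g)$, and taking the infimum over $g$ gives $\mathcal{E}\le\mathcal{A}$. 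Hence $\mathcal{E}=\mathcal{A}$.

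Parts (1) and (2) now follow at once. For (1): if $A_{S}(f)=\mathcal{A}$, then after homotoping the edges of $f$ to be length-minimizing (which preserves this minimality, since it cannot increase $A_{S}$) the induced quasi-metric $l$ satisfies $E_{S}(f,l)=A_{S}(f)=\mathcal{A}=\mathcal{E}$, so $l$ realizes the global minimum of simplicial energy and $E_{S}(f,l)=A_{S}(f)$. For (2): if $E_{S}(f,l)=\mathcal{E}$ for some quasi-metric $l$, then by Lemma~\ref{E>A} together with $A_{S}(f)\ge\mathcal{A}$ we get $\mathcal{A}=\mathcal{E}=E_{S}(f,l)\ge A_{S}(f)\ge\mathcal{A}$, so all of these are equalities; hence $A_{S}(f)=\mathcal{A}$ has the least possible value and $E_{S}(f,l)=A_{S}(f)$.

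The only genuine obstacle, and it is minor, is the first step: one must check that the assignment $l_{i}=L_{i}$ gives a bona fide quasi-metric, which is exactly where the triangle inequality --- and hence the preliminary reduction to length-minimizing edges --- is needed; for a completely arbitrary representative of the homotopy class the induced assignment need not even be a quasi-metric. Everything after that is bookkeeping with the two infima $\mathcal{A}$ and $\mathcal{E}$ and the equality clause of Lemma~\ref{E>A}. The identical argument applies when $F$ is divided into polygons, with Lemma~\ref{E>Aforpolygons} and the polygon inequality in place of Lemma~\ref{E>A} and the triangle inequality.
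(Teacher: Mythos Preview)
Your proof is correct and follows essentially the same route as the paper: both take the induced quasi-metric $l_i=L_i$, invoke Lemma~\ref{E>A} to get $E_S=A_S$ for that choice, and combine this with the universal inequality $E_S\ge A_S$ to pin down the joint minimum. Your packaging via the two infima $\mathcal{A}=\mathcal{E}$ and your explicit verification of the triangle inequality for the induced assignment (by first passing to length-minimizing edges) are refinements the paper simply omits, but the argument is otherwise identical.
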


\begin{proof}
(1) Let $l$ denote the simplicial quasi-metric on $(F,\tau)$ induced by $f$,
i.e. for each edge $e_{i}$ of $\tau$ , we set $l_{i}=L_{i}$. For this
quasi-metric, all the stretch factors $\sigma_{i}$ are equal to $1$ on all
edges with $l_{i}>0$ and therefore $E_{S}(f,l)=A_{S}(f)$, by Lemma \ref{E>A}.
The hypothesis that $A_{S}(f)$ has least possible value implies that, for any
map $f^{\prime}$ homotopic to $f$, we have $A_{S}(f^{\prime})\geq A_{S}(f)$.
Thus for any simplicial quasi-metric $k$ on $F$, we have the inequalities
\[
E_{S}(f^{\prime},k)\geq A_{S}(f^{\prime})\geq A_{S}(f)=E_{S}(f,l).
\]
It follows that $E_{S}(f,l)$ has the least possible value over all maps
homotopic to $f$ and over all simplicial quasi-metrics on $F$, and that
$E_{S}(f,l)=A_{S}(f)$, as claimed.

(2) Suppose that $A_{S}(f)$ does not have the least possible value. Thus there
is a map $f^{\prime}$ homotopic to $f$ with $A_{S}(f^{\prime})<A_{S}(f)$. We
define a simplicial quasi-metric $k$ on $F$ by setting $k_{i}$ equal to
$L_{i}^{\prime}$. For this quasi-metric and the map $f^{\prime}$, the stretch
factors $\sigma_{i}$ are equal to $1$ on all edges with $l_{i}>0$. Thus, by
Lemma \ref{E>A}, we have
\[
E_{S}(f^{\prime},k)=A_{S}(f^{\prime})<A_{S}(f)\leq E_{S}(f,l)
\]
which contradicts the hypothesis that $E_{S}(f,l)$ has least possible value.

We conclude that $A_{S}(f)$ must have the least possible value. Now part (1)
implies that there is a simplicial quasi-metric $k$ on $F$ such that
$E_{S}(f,k)$ has the least possible value among all maps homotopic to $f$ and
over all simplicial quasi-metrics on $F$. Further $E_{S}(f,k)=A_{S}(f)$. As
$E_{S}(f,l)$ also has least possible value, it follows that $E_{S}(f,l)$ must
equal $E_{S}(f,k)$, so that $E_{S}(f,l)=A_{S}(f)$, as required.
\end{proof}

The proof of the above lemma is much simpler than in the smooth setting. This
is because any map of $F$ into $M$ for which one can measure the lengths of
the edges of $\tau$ induces a simplicial quasi-metric on $F$, whereas in the
smooth setting, one needs $f$ to be an immersion in order to induce a
Riemannian metric on $F$.

In Section~\ref{simplicialharmonic} we saw that a finite energy map from a
surface with a fixed simplicial metric or quasi-metric to a manifold of
non-positive curvature can be homotoped to a simplicial harmonic map that
minimizes simplicial energy. We now examine what happens when the simplicial
metric on the domain surface is allowed to change. We will show that the class
of maps homotopic to a given map $f:F\rightarrow M$ contains a map with
minimal simplicial energy among all maps homotopic to $f$ and all possible
simplicial quasi-metrics $(F,\tau,l)$. In the smooth setting this gives a
least area map in the homotopy class of $f$. The existence of a least area map
in that setting was shown by Sacks and Uhlenbeck \cite{SacksUhlenbeck} and by
Schoen and Yau \cite{SchoenYau:78}, with the hypotheses that the surface is
orientable with genus at least $1$, and its fundamental group injects into the
fundamental group of $M$.

It would seem more natural to consider only simplicial metrics rather than
allow quasi-metrics, but the following discussion explains why this is
unreasonable. In fact this is why we introduced the idea of a simplicial
quasi-metric. Consider a map $f_{1}:F\rightarrow M$ from a triangulated
surface $(F,\tau)$ to a Riemannian manifold and let
\[
\mathcal{I}=\inf\{E_{S}(f)\}
\]
where the infimum is taken over all maps $f$ homotopic to $f_{1}$ and over all
simplicial metrics on $\tau$. We would like to establish the existence of a
simplicial harmonic map homotopic to $f_{1}$ with simplicial energy equal to
$\mathcal{I}$. There is a sequence $l^{n}$ of simplicial metrics on $F$, and
of homotopic simplicial maps $f_{n}:F\rightarrow M$ such that $E_{S}%
(f_{n})\rightarrow\mathcal{I}$, as $n\rightarrow\infty$. We call such a
sequence a \textit{simplicial-energy minimizing} sequence. The main difficulty
we face is that even if the sequence $l^{n}$ converges to a function $l^{0}$
on the edges of $\tau$, it may be that $l^{0}(e_{i})=0$, for some edge $e_{i}$
of $\tau$. It is clear that $l^{0}(e_{i})\geq0$, for all $i$, and that $l^{0}$
satisfies the triangle inequality for each triangle of $\tau$. Thus $l^{0}$ is
a simplicial quasi-metric on $F$. There is a further difficulty with this
approach which is that it is possible that $l^{0}(e_{i})=0$, for every edge
$e_{i}$ of $\tau$. For example, suppose that $l$ is a simplicial metric on
$F$, and $f:F\rightarrow M$ is a simplicial map such that $E_{S}%
(f)\rightarrow\mathcal{I}$. For each $n\geq1$, we define the simplicial metric
$l^{n}$ by $l^{n}(e_{i})=l(e_{i})/n$, for each edge $e_{i}$ of $\tau$, and the
simplicial map $f_{n}$ to equal $f$. The scale invariance of simplicial energy
implies that $E_{S}(f_{n})=\mathcal{I}$, for each $n\geq1$, but the sequence
$l^{n}$ converges to the simplicial quasi-metric $l^{0}$ such that
$l^{0}(e_{i})=0$, for every edge $e_{i}$ of $\tau$. The analogous phenomenon
occurs in the smooth setting, and as in that setting we avoid this problem by
changing our initial choice of minimizing sequence.

The following lemma shows that allowing quasi-metrics does not change the
minimal energy for a homotopy class.

\begin{lemma}
Let $f_{1}:F\rightarrow M$ be a map of a triangulated closed surface
$(F,\tau)$ to a closed Riemannian manifold $M$. Let $\mathcal{I}$ denote the
infimum of simplicial energies over the family $\mathcal{F}$ of all maps $f$
homotopic to $f_{1}$ and over all simplicial metrics on $\tau$, and let
$\mathcal{I}_{0}$ denote the infimum of simplicial energies over the family
$\mathcal{F}_{0}$ of all finite energy maps $f$ homotopic to $f_{1}$ and over
all simplicial quasi-metrics on $\tau$.

Then $\mathcal{I}=\mathcal{I}_{0}$.
\end{lemma}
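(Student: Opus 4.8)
The plan is to prove the two inequalities $\mathcal{I}_0\le\mathcal{I}$ and $\mathcal{I}\le\mathcal{I}_0$ separately. The first is immediate: every simplicial metric on $\tau$ is in particular a simplicial quasi-metric, and for a genuine simplicial metric, with no zero-length edges, every map automatically has finite energy by Definition~\ref{defnofsimplicialenergy}. Hence $\mathcal{F}\subseteq\mathcal{F}_0$, and passing to infima gives $\mathcal{I}_0\le\mathcal{I}$.

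For the reverse inequality I would start from an arbitrary map $f\in\mathcal{F}_0$ together with a simplicial quasi-metric $l$ on $(F,\tau)$ for which $E_S(f,l)<\infty$, and approximate $l$ by honest simplicial metrics. Define $l^n$ by $l^n(e_i)=l(e_i)+\tfrac1n$ for every edge $e_i$ of $\tau$. Adding a positive constant to all edge lengths preserves every triangle inequality, since $a\le b+c$ gives $a+\tfrac1n\le b+c+\tfrac2n$, and it makes every length positive; so each $l^n$ is a simplicial metric, and in particular $E_S(f,l^n)\ge\mathcal{I}$ for all $n$. The crucial point is then that $E_S(f,l^n)\to E_S(f,l)$ as $n\to\infty$. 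I would see this from the edge-sum form of the energy in Equation~\ref{edgesum}, which is valid here because $f$ has finite energy for $l$ and for every $l^n$. Since $E_S(f,l)<\infty$, every zero-length edge of $l$ is mapped by $f$ to a point, i.e. $L_i=0$, so such an edge contributes nothing to either $E_S(f,l)$ or $E_S(f,l^n)$; and for an edge $e_i$ with $l_i>0$ the coefficient $\bigl((l_{i_1}+l_{i_2}+l_{i_3}+l_{i_4})+\tfrac4n\bigr)\big/\bigl(l_i+\tfrac1n\bigr)$ converges to $(l_{i_1}+l_{i_2}+l_{i_3}+l_{i_4})/l_i$. As $\tau$ has only finitely many edges, summing gives $E_S(f,l^n)\to E_S(f,l)$.

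It follows that $\mathcal{I}\le E_S(f,l^n)$ for all $n$, hence $\mathcal{I}\le E_S(f,l)$; taking the infimum over all such $f$ and $l$ yields $\mathcal{I}\le\mathcal{I}_0$, and combining with the first inequality gives $\mathcal{I}=\mathcal{I}_0$. The one step that needs any care is the interchange of limit and summation, and it is painless precisely because $\tau$ is finite and because the finite-energy hypothesis forces $L_i=0$ on every edge that $l$ degenerates; without that hypothesis a term of the form $L_i^2 l_j/l_i$ with $l_i\to0$ but $L_i\not\to0$ would blow up. I do not expect any genuine obstacle beyond this bookkeeping.
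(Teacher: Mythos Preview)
Your proof is correct and follows essentially the same approach as the paper: perturb a quasi-metric $l$ to honest simplicial metrics by adding a small positive constant to every edge length, then let that constant tend to zero and use the finite-energy hypothesis (which forces $L_i=0$ whenever $l_i=0$) to see that the energy converges. The only cosmetic differences are that the paper uses a continuous parameter $l^t(e_i)=l^0(e_i)+t$ with $t\to 0$ and computes via the triangle-corner form of $E_S$, whereas you use the sequence $l^n(e_i)=l(e_i)+\tfrac1n$ and the edge-sum form of Equation~\ref{edgesum}; neither difference is substantive.
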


\begin{proof}
As $\mathcal{F}\subset\mathcal{F}_{0}$, it is immediate that $\mathcal{I}%
\geq\mathcal{I}_{0}$.

Now let $l^{0}$ be a simplicial quasi-metric on $F$, and let $f:(F,\tau
,l^{0})\rightarrow M$ be a finite energy map homotopic to $f_{1}$. Consider
the family of edge length functions $l^{t}$, defined for $t>0$ by $l^{t}%
(e_{i})=l^{0}(e_{i})+t$. As $l^{t}$ is positive and satisfies all triangle
inequalities in $\tau$, it follows that $l^{t}$ is a simplicial metric for
each $t>0$.

Let $E_{S}(f,l^{t})$ denote the energy of the map $f:F\rightarrow M$ computed
using the simplicial quasi-metric $l^{t}$ on $\tau$. For $t>0$ we have
\[
E_{S}(f,l^{t})=\frac{1}{2}\sum\left(  \frac{L_{i}^{2}}{(l^{t}(e_{i}))^{2}%
}+\frac{L_{j}^{2}}{(l^{t}(e_{j}))^{2}}\right)  l^{t}(e_{i})l^{t}(e_{j}).
\]
If either of $l^{0}(e_{i})$ or $l^{0}(e_{j})$ is zero then so is $L_{i}$ or
$L_{j}$, so that
\[
\lim_{t\rightarrow0}E_{S}(f,l^{t})=E_{S}(f,l^{0}).
\]

It follows immediately that $\mathcal{I}\leq\mathcal{I}_{0}$, so that
$\mathcal{I}=\mathcal{I}_{0}$ as required.
\end{proof}

Next we show the existence of a simplicial quasi-metric and a map which
minimizes energy in a homotopy class. Note that, unlike the smooth case, this
theorem places no requirements on the given map $f_{1}$ or on its homotopy
class. In the smooth case, an incompressibility condition is required.

\begin{theorem}
\label{absolutelyleastenergymapexists}Let $f_{1}:F\rightarrow M$ be a map of a
triangulated closed surface $(F,\tau)$ to a closed non-positively curved
manifold $M$. Then there is a simplicial quasi-metric $l^{0}$ on $F$, and a
simplicial map $f_{0}$ homotopic to $f_{1}$, such that $f_{0}$ minimizes
simplicial energy among all maps homotopic to $f_{1}$ and all choices of
simplicial quasi-metric on $(F,\tau)$. Further $f_{0}$ minimizes simplicial
area among all maps homotopic to $f_{1}$.
\end{theorem}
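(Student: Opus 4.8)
The plan is to combine an elementary a priori bound on edge lengths with the compactness argument already used in Proposition~\ref{uniqueharmonicmapsexist}, and then to read off both conclusions from Lemma~\ref{E>A}. The key observation is the following estimate, valid for \emph{every} simplicial map $g:(F,\tau)\rightarrow M$. If $e_{i}$ is an edge of $\tau$ whose image length $L_{i}$ is maximal, pick a $2$--simplex $T$ of $\tau$ containing $e_{i}$, with other edges $e_{j},e_{k}$. Lifting $T$ to the universal cover $\widetilde{M}$ produces a geodesic triangle, so $L_{i}\leq L_{j}+L_{k}$ and hence $\max(L_{j},L_{k})\geq L_{i}/2$. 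Since the corresponding product $L_{i}L_{j}$ (or $L_{i}L_{k}$) occurs as one of the nonnegative terms of $A_{S}(g)=\sum_{T}L_{i}L_{j}+L_{j}L_{k}+L_{k}L_{i}$, we obtain
\[
\max_{i}L_{i}(g)\leq\sqrt{2\,A_{S}(g)}.
\]
This is exactly the control that is missing in the smooth theory, and it is why no incompressibility hypothesis on $f_{1}$ is needed.

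Granting this, first I would pass to a good minimizing sequence. For any finite energy map $f\sim f_{1}$ and any simplicial quasi-metric $k$, Lemma~\ref{E>A} gives $E_{S}(f,k)\geq A_{S}(f)=E_{S}(f,l)$, where $l$ is the quasi-metric with $l_{i}=L_{i}(f)$ (all stretch factors equal $1$ on edges of positive length); and straightening $f$ on the $1$--skeleton, as in Section~\ref{simplicialharmonic}, yields a simplicial map $\bar f\sim f_{1}$ with $L_{i}(\bar f)\leq L_{i}(f)$, hence $A_{S}(\bar f)\leq A_{S}(f)$. It follows that $\mathcal{I}_{0}$ equals the infimum of $A_{S}(g)$ over simplicial maps $g\sim f_{1}$, and that there is a sequence of simplicial maps $f_{n}\sim f_{1}$ with $A_{S}(f_{n})\rightarrow\mathcal{I}_{0}$. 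The numbers $A_{S}(f_{n})$ are bounded, so by the estimate above $L_{i}(f_{n})\leq C$ for some constant $C$ and all $i,n$.

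Next I would extract a limit exactly as in part (1) of the proof of Proposition~\ref{uniqueharmonicmapsexist}. Fixing a base vertex $v_{0}$ of $\tau$, a lift $\widetilde{v}_{0}$, and a compact fundamental domain $W\subset\widetilde{M}$, choose for each $n$ a lift $\widetilde{f}_{n}:\widetilde{F}\rightarrow\widetilde{M}$ of $f_{n}$ sending $\widetilde{v}_{0}$ to a point of $W$ above $f_{n}(v_{0})$. Joining every vertex of $\tau$ to $v_{0}$ by a simple edge path of at most $r$ edges and using $L_{i}(f_{n})\leq C$, all vertex images $\widetilde{f}_{n}(\widetilde{v})$ lie in a fixed compact ball; passing to a subsequence, these converge for every vertex $\widetilde{v}$ of $\widetilde{F}$, and the limiting equivariant map determines a simplicial map $f_{0}\sim f_{1}$ with $L_{i}(f_{0})=\lim_{n}L_{i}(f_{n})$, hence $A_{S}(f_{0})=\lim_{n}A_{S}(f_{n})=\mathcal{I}_{0}$. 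Set $l^{0}_{i}:=L_{i}(f_{0})$, a simplicial quasi-metric on $(F,\tau)$.

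Finally I would verify minimality. By Lemma~\ref{E>A}, $E_{S}(f_{0},l^{0})=A_{S}(f_{0})=\mathcal{I}_{0}$. For any map $f\sim f_{1}$ and any simplicial quasi-metric $k$, either $E_{S}(f,k)=\infty$, or $E_{S}(f,k)\geq A_{S}(f)\geq A_{S}(\bar f)\geq\mathcal{I}_{0}$ by Lemma~\ref{E>A} and the definition of $\mathcal{I}_{0}$; in either case $E_{S}(f,k)\geq E_{S}(f_{0},l^{0})$, so $f_{0}$ minimizes simplicial energy over all maps homotopic to $f_{1}$ and all simplicial quasi-metrics. The same chain $A_{S}(f)\geq A_{S}(\bar f)\geq\mathcal{I}_{0}=A_{S}(f_{0})$ shows that $f_{0}$ also minimizes simplicial area in the homotopy class. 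I expect the only genuinely delicate step to be the convergence argument of the third paragraph, and the length estimate of the first paragraph is precisely what makes it work; the rest is bookkeeping already carried out in Lemma~\ref{E>A} and Proposition~\ref{uniqueharmonicmapsexist}.
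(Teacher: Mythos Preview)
Your proof is correct and follows essentially the same route as the paper: reduce to simplicial maps equipped with the induced quasi-metric $l_i=L_i$ so that $E_S=A_S$, bound the image edge lengths using the triangle inequality for the geodesic triangles in $\widetilde{M}$, and extract a convergent subsequence of vertex images. The only difference is that you package the edge-length bound as the clean a priori estimate $\max_i L_i\le\sqrt{2A_S(g)}$, whereas the paper obtains boundedness of each $L_i^n$ by a short contradiction argument (if some $L_0^n\to\infty$ then boundedness of $L_0^n\sum L_j^n$ in $A_S$ forces the adjacent $L_j^n\to 0$, and then the same triangle inequality gives $L_0^n\to 0$); the ingredients are identical and your formulation is slightly more direct.
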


\begin{proof}
Let $\mathcal{I}$ denote the infimum of simplicial energies over the family of
all finite energy maps $f$ homotopic to $f_{1}$ and over all simplicial
quasi-metrics on $\tau$.

Consider a sequence $l^{n}$ of simplicial quasi-metrics on $(F,\tau)$ and
finite-energy simplicial maps $f_{n}:F\rightarrow M$, each homotopic to $f$,
such that $\lim_{n\rightarrow\infty}E_{S}(f_{n})=\mathcal{I}$. For each $n$,
let $L_{i}^{n}$ denote the length of $f_{n}|e_{i}$ and define a new simplicial
quasi-metric $k^{n}$ on $(F,\tau)$ by $k^{n}(e_{i})=L_{i}^{n}$. For this
quasi-metric, the map $f_{n}$ is a simplicial isometry, with $l_{i}^{n}%
=L_{i}^{n}$ and $E_{S}(f_{n},k^{n})=A_{S}(f_{n})$. Now $A_{S}(f_{n})$ is
independent of the simplicial quasi-metric on $F$ and $A_{S}(f_{n})\leq
E_{S}(f_{n},l^{n})$ by Lemma~\ref{E>A}. Hence $E_{S}(f_{n},k^{n})\leq
E_{S}(f_{n},l^{n})$, and replacing each simplicial quasi-metric $l^{n}$ by
$k^{n}$ retains the property that $\lim_{n\rightarrow\infty}E_{S}%
(f_{n})=\mathcal{I}$, so that $f_{n}$ is still an energy minimizing sequence.
In particular, the sequences $E_{S}(f_{n})$ and $A_{S}(f_{n})$ are bounded.

We now claim that, after passing to a subsequence, the sequence $L_{i}^{n}$ is
bounded for each $i$. First note that by passing to a subsequence we can
arrange that each sequence $L_{i}^{n}$ is either convergent or unbounded. Now
consider an edge $e_{0}$ and suppose that $\{L_{0}^{n}\}$ is unbounded. By
passing to a further subsequence we can assume that $\{L_{0}^{n}\}$ is
increasing and unbounded. As $e_{0}$ meets one triangle of $\tau$ on each of
its two sides, there are four additional edges of $\tau$ which belong to a
triangle containing $e_{0}$. We denote these edges by $e_{1},e_{2},e_{3}%
,e_{4}$ (they may not all be distinct in $F$). The terms in $A_{S}(f_{n})$
which involve $L_{0}^{n}$ are $\sum_{i=1}^{4}L_{0}^{n}L_{i}^{n}=L_{0}^{n}%
\sum_{i=1}^{4}L_{i}^{n}$, and this sequence of sums is bounded, as we noted
above. As $L_{0}^{n}$ is increasing and unbounded, the sum $\sum_{i=1}%
^{4}L_{i}^{n}$ must approach $0$ as $n\rightarrow\infty$. Since each
$L_{i}^{n}\geq0$, we deduce that $L_{i}^{n}\rightarrow0$ as $n\rightarrow
\infty$, for $i=1,2,3,4$. Now the triangle inequality implies that $L_{0}^{n}$
must also converge to $0$ as $n\rightarrow\infty$, a contradiction. This
establishes that $L_{i}^{n}$ is bounded for each $i$, as claimed. It follows
that there is a subsequence of $f_{n}$ for which each sequence $L_{i}^{n}$ is
convergent. As $M$ is compact, we can find a further subsequence such that for
each vertex $v$ of $\tau$, the sequence $\{f_{n}(v)\}$ converges. As
$k^{n}(e_{i})=L_{i}^{n}$, the sequence $\{k^{n}\}$ also converges and the
limit is a simplicial quasi-metric $k^{0}$ on $F$. So the sequence of
finite-energy simplicial maps $f_{n}$ converges to a finite-energy simplicial
map $f_{0}$ with $E(f_{0})=\mathcal{I}$. This completes the proof of the theorem.
\end{proof}

We say that a map of a surface into $M$ is \emph{simply essential} if it sends
nontrivial simple loops on the surface to nontrivial elements of $\pi_{1}(M)$.
For two-sided embedded surfaces in a $3$--manifold this is equivalent to being
$\pi_{1}$--injective, but for singular surfaces in a $3$--manifold this
equivalence is unknown, and is called the simple loop conjecture. The result
below shows that if we assume that the map $f_{1}:F\rightarrow M$ in Theorem
\ref{absolutelyleastenergymapexists} is simply essential, then we can refine
the result to assert the existence of a simplicial metric (not just
quasi-metric) on $F$ and an energy minimizing map.

\begin{corollary}
\label{simplyessential} Let $f_{0}:F\rightarrow M$ be a simply essential
simplicial map from a triangulated closed surface $(F,\tau)$ to a closed
non-positively curved manifold $M$, and suppose that $l$ is a simplicial
quasi-metric on $(F,\tau)$ such that $E(f_{0})=\mathcal{I}$. Then there is a
new triangulation $\tau^{\prime}$ of $F$, obtained by collapsing edges and
triangles of $\tau$, a new simplicial map $f_{0}^{\prime}$ homotopic to
$f_{0}$, and a simplicial metric on $\tau^{\prime}$ for which $E(f_{0}%
^{\prime})=\mathcal{I}$.
\end{corollary}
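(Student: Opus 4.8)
The plan is to collapse away every simplex that $f_0$ sends to a point, and to check that this changes neither the homeomorphism type of $F$ nor the minimal energy. First I would replace $l$ by the simplicial quasi-metric $l_{\mathrm{ind}}$ induced by $f_0$, i.e. $l_{\mathrm{ind}}(e_i)=L_i$ for each edge $e_i$: this is still a quasi-metric, and by Lemma~\ref{E>A} together with the proof of Theorem~\ref{absolutelyleastenergymapexists} it still realizes the infimum, so that $E_S(f_0,l_{\mathrm{ind}})=A_S(f_0)=\mathcal{I}$, every stretch factor equals $0$ or $1$, and $f_0$ also minimizes simplicial area in its homotopy class. Let $\tau_0$ be the subcomplex of zero-length edges, which is now precisely the set of edges $f_0$ sends to points, and let $\Sigma$ be the subcomplex consisting of $\tau_0$ together with every $2$-simplex all of whose edges lie in $\tau_0$; every simplex of $\Sigma$ is sent by $f_0$ to a single point, so $f_0$ is constant on each component of $\Sigma$.

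The heart of the proof is that each component $C$ of $\Sigma$ is simply connected. Suppose not. Since $C$ is a proper subcomplex of the surface $F$ it carries no $2$-cycles, so $H_1(C)\neq 0$, and hence $\tau_0\cap C$ contains a simple closed loop $\gamma$ essential in $C$. By the simply-essential hypothesis $\gamma$ cannot be essential in $F$, because $f_0(\gamma)$ is a point, so $\gamma$ bounds a disc $D$ in $F$, which is a subcomplex. As $M$ is non-positively curved it is aspherical, so $f_0|_D$, which sends $\partial D$ to a point, is homotopic rel $\partial D$ to a constant map; performing this homotopy leaves $f_0$ unchanged outside $D$ and replaces $f_0$ by a homotopic map whose simplicial area is $A_S(f_0)-\sum_{T\subseteq D}A_S(f_0,T)$. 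Minimality of the simplicial area forces $\sum_{T\subseteq D}A_S(f_0,T)=0$, so every $2$-simplex of $D$ has at most one edge of positive length. If such a simplex $T=[x,y,z]$ had exactly one positive edge, say $[x,z]$, then the other two edges are constant, so $f_0|_{\partial T}$ is, up to reparametrization, the geodesic loop $f_0|_{[x,z]}$, which is null-homotopic because it bounds $f_0(T)$; in a non-positively curved manifold a null-homotopic geodesic loop is constant, contradicting positivity of $[x,z]$. Hence every $2$-simplex of $D$ has all three edges in $\tau_0$, so $D\subseteq\Sigma$, and then $D\subseteq C$ since $D$ is connected and meets $C$ along $\gamma$; but then $\gamma$ bounds a disc inside $C$, contradicting that $\gamma$ is essential in $C$. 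The same loop argument shows that any $2$-simplex of $\tau$ carrying two zero-length edges has all three edges of zero length.

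Granting in addition that no $2$-simplex of $\tau$ has exactly one zero-length edge — equivalently, that every $2$-simplex meeting $\tau_0$ in an edge lies in $\Sigma$, which one should derive from minimality of the simplicial area and non-positive curvature — the argument concludes as follows. A simply connected subcomplex of a surface is contractible and cellular, so collapsing each component of $\Sigma$ to a point produces a $\Delta$-complex $\tau'$ triangulating a surface homeomorphic to $F$, with collapse map homotopic to the identity of $F$; the $2$-simplices it deletes are exactly those meeting $\tau_0$ in an edge, hence exactly those lying in $\Sigma$, and the surviving edges, all of positive length, are carried injectively and keep their $f_0$-images. Thus $f_0$ descends to a map $f_0':F\to M$ homotopic to $f_0$, which after re-straightening the cone lines of $\tau'$ is simplicial. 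Since $\tau'$ has no zero-length edge, the edge-length function induced by $f_0'$ is an honest simplicial metric $l'$, and $E_S(f_0',l')=A_S(f_0')$. Finally every $2$-simplex deleted in the collapse lies in $\Sigma$ and so has zero simplicial area, while the collapse alters neither the $f_0$-lengths of the surviving edges nor the simplicial areas of the surviving $2$-simplices, so $A_S(f_0')=A_S(f_0)=\mathcal{I}$ and therefore $E_S(f_0',l')=\mathcal{I}$, as claimed.

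I expect the main obstacle to lie in two places. The first, where the simply-essential hypothesis is genuinely used, is proving that the components of $\Sigma$ are simply connected; this forces one to combine that hypothesis with asphericity of $M$, with minimality of the simplicial area, and with the rigidity fact that a null-homotopic geodesic loop in a non-positively curved manifold is constant. The second is the bookkeeping that the collapse does not decrease the simplicial area: the delicate case is a $2$-simplex with a single zero-length edge, whose two positive edges are forced to coincide as geodesics but which must still be shown to carry zero simplicial area (or else be absorbed by a preliminary area-preserving homotopy). The remaining work — checking that collapsing a contractible subcomplex of a $\Delta$-complex surface is again a $\Delta$-complex triangulation of that surface, and tracking the identifications of edges and of cone structures — is routine but needs care, since $\tau$ is only assumed to be a $\Delta$-complex rather than a simplicial complex.
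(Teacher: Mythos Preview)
Your overall strategy---collapse the zero-length subcomplex $\Sigma$, use the simply-essential hypothesis together with area-minimality to show each component is simply connected, and pass to the quotient---matches the paper's proof. Your argument that components of $\Sigma$ are simply connected is essentially the same as the paper's, though phrased via a bounding disc rather than a regular neighborhood.

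The genuine gap is exactly where you flag it: the treatment of a $2$--simplex $T$ with a single zero-length edge. You propose either that such a $T$ has zero simplicial area or that it can be absorbed by an area-preserving homotopy; neither works. If $T$ has edges $a,b,c$ with $l(a)=0$ and $l(b)=l(c)>0$, then $A_S(f_0,T)=L_bL_c>0$, so the first option fails outright, and since $b$ and $c$ are shared with neighboring triangles there is no local homotopy that removes this contribution without disturbing the rest of the area. In particular your assumption that ``no $2$--simplex has exactly one zero-length edge'' is not a consequence of minimality and is generally false.

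The paper does \emph{not} try to rule these triangles out. Instead it performs a two-stage collapse: first collapse $\Sigma$ (the paper's $\Delta$), which turns each such $T$ into a $2$--gon whose two sides have equal length and equal image geodesic; then collapse each $2$--gon to a single arc, identifying its two sides. Only after this second collapse does one obtain an honest $\Delta$--complex triangulation $\tau'$, and one checks that the induced edge-length function is well defined (the two sides of a $2$--gon had the same length) and strictly positive. Your write-up should incorporate this second collapse rather than hoping the offending triangles are absent.
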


\begin{proof}
As $f_{0}$ is simplicial, it maps any edge of $\tau$ to a point or to a
geodesic arc, and it maps any triangle of $\tau$ to a point or to a geodesic
arc or to a triangle. If an edge $e$ of $\tau$ is zero length, i.e. $l(e)=0$,
the fact that $f_{0}$ has finite energy implies that it must map $e$ to a
point. If a triangle $\sigma$ of $\tau$ has $\partial\sigma$ consisting
entirely of zero length edges, the fact that $f_{0}$ is simplicial implies
that it must map $\sigma$ to a point. If $\partial\sigma$ has two zero length
edges, the fact that $l$ satisfies the triangle inequality implies that all
three edges of $\partial\sigma$ are zero length. Finally if $\partial\sigma$
has one zero length edge, the fact that $l$ satisfies the triangle inequality
implies that the other two edges of $\partial\sigma$ have the same
$l$--length, and the fact that $f_{0}$ is simplicial implies that these edges
have the same image under $f_{0}$.

Now we let $\Delta$ denote the subcomplex of $\tau$ consisting of all zero
length edges of $\tau$ together with all triangles of $\tau$ whose boundary
consists of zero length edges. Thus $f_{0}$ maps each component of $\Delta$ to
a point. As $f_{0}$ is simply essential, if $N$ denotes a regular neighborhood
of a component of $\Delta$, all except one of the components of the closure of
$F-N$ must be a disc, and the union of $N$ with these discs must itself be a
disc. Suppose there is a disc component $B$ of the closure of $F-N$, and let
$R\ $denote the component of $F-\Delta$ which contains $B$. Then $\partial R$
is contained in $\Delta$, so is mapped to a point by $f_{0}$. The facts that
$f_{0}$ maps $\partial R$ to a point, is simplicial, and has least energy
implies that it must map all of $R$ to a point. Thus $R$ is contained in
$\Delta$, which is a contradiction. This shows that no component of the
closure of $F-N$ can be a disc, so that $N$ itself must be a disc.

We form the quotient $F^{\prime\prime}$ of $F$ by collapsing every simplex of
$\Delta$ to a point. Note that $f_{0}$ must factor through a map
$f_{0}^{\prime\prime}:F^{\prime\prime}\rightarrow M$. The preceding discussion
implies that $F^{\prime\prime}$ is homeomorphic to $F$. The image of $\tau$ in
$F^{\prime\prime}$ need not be a triangulation. For if $\sigma$ is a triangle
of $\tau$ such that $\partial\sigma$ has one zero length edge, then the image
of $\sigma$ in $F^{\prime\prime}$ is a $2$--gon. But the image of $\sigma$ in
$M$ is a geodesic arc, and so we can take a further quotient $F^{\prime}$ of
$F^{\prime\prime}$, by collapsing each such $2$--gon to an arc, and $f_{0}$
must factor through a map $f_{0}^{\prime}:F^{\prime}\rightarrow M$. Again
$F^{\prime}$ must be homeomorphic to $F$, and now the image of $\tau$ in
$F^{\prime}$ is naturally a triangulation $\tau^{\prime}$ of $F^{\prime}$.
(Note that even if $\tau$ is a simplicial triangulation of $F$, the
triangulation $\tau^{\prime}$ of $F^{\prime}$ may not be simplicial.)

Distinct edges of $\tau$ cannot map to the same edge in $F^{\prime\prime}$.
Thus if distinct edges $e_{1}$ and $e_{2}$ of $\tau$ map to the same edge in
$F^{\prime}$, it must be because certain $2$--gons in $F^{\prime\prime}$ were
collapsed, so that $l(e_{1})=l(e_{2})$. It follows that there is a well
defined function $l^{\prime}$ on the edges of $\tau^{\prime}$, given by
$l^{\prime}(e^{\prime})=l(e)$, where $e^{\prime}$ is any edge of $\tau
^{\prime}$ and $e$ is any edge of $\tau$ which maps to $e^{\prime}$. Clearly
$l^{\prime}$ is never zero and so is a simplicial metric on $\tau^{\prime}$.
Further, as $f_{0}$ factors through the simplicial map $f_{0}^{\prime
}:F^{\prime}\rightarrow M$, it follows that $E(f_{0}^{\prime})=\mathcal{I}$.
Finally, as $F^{\prime}$ is obtained from $F$ by collapsing disjoint
contractible subsets, $f_{0}^{\prime}$ must be homotopic to $f_{0}$.
\end{proof}

A consequence of Corollary~\ref{simplyessential} is that any potential
counterexample for the simple loop conjecture in a hyperbolic $3$--manifold
can be chosen to be geometrically controlled; it can be homotoped to be
simplicial with area less than $2\pi\left\vert \chi\right\vert $. Since the
simple loop conjecture has been proven for Seifert fiber spaces \cite{Hass:87}
and for graph manifolds \cite{RubinsteinWang}, the hyperbolic case is central
to this problem. Note that in \cite{CooperManning}, Cooper and Manning gave an
example of a representation of a surface group into $PSL_{2}(\mathbb{C})$
which has nontrivial kernel, but no element of the kernel is represented by an
essential simple closed curve. However this is not a counterexample to the
simple loop conjecture as their representation is not known to be discrete,
nor to have image in the fundamental group of a hyperbolic $3$--manifold.

\section{Maps between surfaces and canonical triangulations}

\label{diffeo}

In this section we consider a metrized triangulated closed surface
$(F_{1},\tau,l)$, and a closed surface $F_{2}$ with Euler characteristic
$\chi<0$, and with a Riemannian metric of non-positive curvature. We will
consider maps from $F_{1}$ to $F_{2}$ which are homotopic to a given
homeomorphism. We show that in the homotopy class of such a map there is a
simplicial harmonic map which is a homeomorphism. If $F_{2}$ is negatively
curved, this simplicial harmonic map is then unique in its homotopy class.
Thus the simplicial harmonic map never contains unnecessary singularities or
folds. Hence if $\tau$ is a fixed triangulation of $F_{1}$, this map gives a
canonical triangulation of $F_{2}$ that varies continuously as we vary the
metric $h$. Note that if $F_{1}$ and $F_{2}$ were orientable, with $F_{1}$ of
genus $3$, and $F_{2}$ of genus $2$, and if $f$ was a map of degree $1$ from
$F_{1}$ to $F_{2}$, then $f$ would be homotopic to a simplicial harmonic map
$g$, by Proposition \ref{uniqueharmonicmapsexist}, but $g$ could not be a
homeomorphism nor could it be a covering map. Thus in general simplicial
harmonic maps between surfaces must have some type of singularity. See Sampson
\cite{Sampson} for an analysis of the possible singularities in the smooth case.

In order to prove the results in this section, we need to slightly restrict
the type of triangulations we allow.

\begin{definition}
A triangulation $\tau$ of a compact surface $F$ is \emph{good} if no edge of
$\tau$ forms a null homotopic loop, and no union of two edges of $\tau$ forms
a null homotopic loop.
\end{definition}

Clearly if a triangulation of $F$ is not good and $F$ has a metric of
non-positive curvature, it is impossible to have a homeomorphism of $F$ which
sends every edge to a geodesic.

For the rest of this section, we fix a metrized triangulated surface
$(F_{1},\tau,l)$, a closed surface $(F_{2},h)$ with Euler characteristic
$\chi<0$, and with a non-positively curved Riemannian metric $h$, and a
homeomorphism $f:F_{1}\rightarrow F_{2}$. Let $I(f)$ denote the infimum of the
simplicial energies of all homeomorphisms isotopic to $f$, and let $\tau^{1}$
denote the $1$--skeleton of $\tau$. Note that we can isotope $f$ to be
piecewise geodesic on $\tau^{1}$, so that the simplicial energy is now
defined. In particular, the isotopy class of $f$ always contains
homeomorphisms whose simplicial energy is defined. We will consider an energy
minimizing sequence of homeomorphisms $\{f_{n}\}$ from $F_{1}$ to $F_{2}$ each
of which is isotopic to $f$, and will show that there is a subsequence which
yields a simplicial homeomorphism $(F_{1},\tau,l)\rightarrow(F_{2},h)$ whose
energy equals $I(f)$. The special case where $\tau$ is a $1$--vertex
triangulation and $h$ is negatively curved turns out to be much simpler and
will be treated separately after the following lemma. Note that this first
lemma does not require that $\tau$ be good, nor that the metric on $F_{2}$ be
non-positively curved.

\begin{lemma}
\label{edgesaremappedtogeodesics}Suppose that $F_{1}$ and $F_{2}$ are closed
surfaces with negative Euler number. Let $f:(F_{1},\tau,l)\rightarrow
(F_{2},h)$ be a homeomorphism from $F_{1}$, with simplicial metric $l$, to
$F_{2}$ with Riemannian metric $h$. Let $\{f_{n}\}$ be a sequence of
homeomorphisms from $F_{1}$ to $F_{2}$, each of which is isotopic to $f$ and
whose simplicial energies approach $I(f)$. Then there is a subsequence of the
$f_{n}$'s whose restrictions to $\tau^{1}$ converges to a map $f_{0}:\tau
^{1}\rightarrow F_{2}$. Further any map $f_{0}$ obtained in this way maps each
edge of $\tau$ to a point or to a geodesic arc.
\end{lemma}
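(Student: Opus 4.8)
The plan is to first extract the limit map $f_0$ by a compactness argument, and then to show that minimality of $I(f)$ forces $f_0$ to send each edge to a geodesic (or a point), via a cut-and-paste argument: if some $f_0|e_i$ were non-geodesic, I would build homeomorphisms isotopic to $f$ of strictly smaller energy, contradicting the definition of $I(f)$. For the compactness, recall from Equation~\ref{edgesum} that the simplicial energy is $E_S(g)=\sum_i w_i L_i(g)^2$ with fixed positive weights $w_i$ depending only on $(F_1,\tau,l)$; hence convergence of $E_S(f_n)$ gives a uniform bound $L_i(f_n)\le C$ for every edge $e_i$. After precomposing each $f_n$ with a self-homeomorphism of $F_1$ isotopic to the identity that reparametrizes every edge at constant speed---which changes no edge length, hence no energy, and keeps $f_n$ in the isotopy class of $f$---each $f_n|e_i$ becomes $C/l_i$-Lipschitz. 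Since $F_2$ is compact, Arzel\`a--Ascoli applies to each of the finitely many families $\{f_n|e_i\}_n$, and a diagonal argument gives a subsequence whose restrictions to $\tau^1$ converge uniformly to a continuous $f_0\colon\tau^1\to F_2$. Passing to a further subsequence I may assume $L_i(f_n)\to L_i^0$ for every $i$, so $I(f)=\sum_i w_i(L_i^0)^2$, and lower semicontinuity of length under uniform convergence gives $\operatorname{length}(f_0|e_i)\le L_i^0$.

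Now let $f_0$ be any limit obtained this way, the associated subsequence still denoted $\{f_n\}$, and suppose some $f_0|e_{i_0}$ is neither constant nor a geodesic arc. Then $f_0|e_{i_0}$ fails to be locally length minimizing at some interior point, so I can choose an interior point $p$ of $e_{i_0}$, a short closed subinterval $J\ni p$ of $e_{i_0}$, and a sufficiently small geodesically convex ball $B\ni f_0(p)$ in $F_2$ with $f_0(\overline J)\subset B$ and positive deficit $2\rho_0:=\operatorname{length}(f_0|J)-d_{F_2}(f_0(\partial J))>0$, and I may also take the closure of $B$ disjoint from all vertex images and from $f_0(e_j)$, $j\ne i_0$. (The finitely many vertex images are easily avoided; when the only non-locally-minimizing point of $f_0|e_{i_0}$ has its image on another edge image, a preliminary $C^0$-small perturbation of the $f_n$ within their isotopy class---altering energies by amounts tending to $0$, hence preserving the minimizing property---clears the obstruction.) For all large $n$ one then has $f_n(\overline J)\subset B$, $f_n$ sending no vertex into $B$, and, by lower semicontinuity of length and continuity of endpoints, $\operatorname{length}(f_n|J)-d_{F_2}(f_n(\partial J))>\rho_0$. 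Fix such an $n$ and set $D_n:=f_n^{-1}(B)$, an embedded disk in $F_1$ containing $J$ and meeting no vertex of $\tau$; then $\tau^1\cap D_n$ is a disjoint union of arcs properly embedded in $D_n$, one component of $e_{i_0}\cap D_n$ being an arc $\widehat J\supseteq J$, and $f_n$ carries these to a disjoint family of properly embedded arcs in the disk $B$. Because $B$ is a small convex ball, this family is ambiently isotopic, rel $\partial B$ and through disjoint families, to the family obtained by replacing each arc with the minimizing geodesic in $B$ between its endpoints; each such chord is no longer than the arc it replaces, and the chord replacing $f_n(\widehat J)$ is shorter by more than $\rho_0$, since the deficit of $f_n|\widehat J$ is at least that of $f_n|J$. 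Carrying $f_n$ along this isotopy over $D_n$ and leaving it unchanged on $F_1\smallsetminus D_n$ produces a homeomorphism $g_n$---obtained from $f_n$ by postcomposition with a homeomorphism of $F_2$ supported in $B$ and isotopic to the identity, hence isotopic to $f$---with $L_j(g_n)\le L_j(f_n)$ for all $j$ and $L_{i_0}(g_n)<L_{i_0}(f_n)-\rho_0$. Hence
\[
E_S(g_n)\le E_S(f_n)-w_{i_0}\bigl(L_{i_0}(f_n)^2-(L_{i_0}(f_n)-\rho_0)^2\bigr)\le E_S(f_n)-w_{i_0}\rho_0^2 ,
\]
using $L_{i_0}(f_n)\ge\rho_0$. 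Letting $n\to\infty$ gives $I(f)\le I(f)-w_{i_0}\rho_0^2$, a contradiction; so $f_0$ sends every edge to a point or a geodesic arc.

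The compactness step is routine. The real content is the cut-and-paste, and the point I expect to require the most care is keeping $g_n$ a homeomorphism while lengthening no other edge: this is exactly what is gained by localizing the change inside $f_n^{-1}(B)$ for a convex ball $B$ missing the vertex images, so that the $1$-skeleton is untouched away from $B$, while inside $B$ the edge pieces form a disjoint family of embedded arcs, and disjoint arcs in a small convex ball can be straightened simultaneously with no increase in length and no new intersections. The fussiest part of the write-up will be the general-position preliminaries that let $B$ be chosen near a non-geodesic point of $f_0|e_{i_0}$ and disjoint from the images of the other edges.
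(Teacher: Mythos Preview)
Your argument is essentially the same as the paper's: Arzel\`a--Ascoli compactness followed by a local cut-and-paste inside a small convex disc in $F_2$, straightening every arc of $f_n(\tau^1)$ inside that disc to its chord, and reading off a definite energy drop that contradicts $E_S(f_n)\to I(f)$. The paper phrases the key claim as ``for every $z\in f_0(\tau^1)$ not a vertex image, a small convex disc $D$ about $z$ meets $f_0(\tau^1)$ in geodesic arcs'', but the mechanism is identical.

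There is one unnecessary detour in your write-up. You impose that the closure of $B$ avoid $f_0(e_j)$ for $j\ne i_0$, and then try to salvage this by a ``preliminary $C^0$-small perturbation of the $f_n$''. That perturbation step is vague and not obviously well-defined (perturbing the $f_n$ moves the limit $f_0$, and there is no reason the new limit has a non-geodesic point on $e_{i_0}$ whose image misses every other edge image). Fortunately you never use the avoidance condition: your own straightening step already handles \emph{all} arcs of $\tau^1\cap D_n$ simultaneously, replacing each by its chord and noting that disjoint properly embedded arcs in a disc with unlinked endpoints straighten to disjoint chords. This is exactly what the paper does. So simply drop the requirement that $\overline{B}$ miss the other $f_0(e_j)$, keep only the (easy) requirement that $\overline{B}$ miss the finitely many vertex images, and your argument goes through cleanly.
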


\begin{proof}
As $f_{n}$ has finite simplicial energy, it must map each edge of $\tau$ to a
rectifiable arc. As the sequence of simplicial energies of the $f_{n}$'s is
convergent, it is bounded. As $F_{2}$ is compact, Ascoli's Theorem shows that
the sequence of $f_{n}$'s has a subsequence whose restriction to $\tau^{1}$
converges to a continuous function $f_{0}$. If $e$ is an edge of $\tau$, then
the sequence of lengths of the arcs $f_{n}\mid e$ is also bounded, so that
$f_{0}\mid e$ must have finite length. Thus $f_{0}$ maps each edge of $\tau$
to a rectifiable arc. Further for any extension of $f_{0}$ to all of $F_{1}$,
we have $E_{S}(f_{0})=I(f)$.

We claim that if $z$ is any point of $f_{0}(\tau^{1})$ which is not the image
of a vertex of $\tau$, then there must be a disc $D$ centered on $z$ such that
$f_{0}(\tau^{1})$ meets $D$ in geodesic arcs. Assuming this claim, the fact
that there can be only finitely many images of vertices of $\tau$ immediately
implies that $f_{0}$ must map each edge of $\tau$ to a point or to a geodesic
arc, as required.

To prove our claim, suppose that $z$ is a point of $f_{0}(\tau^{1})$ which is
not the image of a vertex of $\tau$. Then suitably small neighborhoods of $z$
will also not contain the image of a vertex of $\tau$. As $f_{0}$ maps each
edge of $\tau$ to a rectifiable arc, it follows that there is a small convex
disc $D$ centered on $z$ such that $f_{0}^{-1}(\partial D)$ consists of a
finite number of points $w_{1},\ldots,w_{k}$. As $D$ does not contain the
image of a vertex, the intersection of the image of $f_{0}(\tau^{1})$ with $D$
consists of isolated points in $\partial D$ together with arcs joining the
images of the remaining $w_{i}$'s in pairs. We replace these arcs by geodesic
arcs joining the images of the $w_{i}$'s in pairs in the same way, to obtain a
new map $f_{0}^{\prime}$ of $\tau^{1}$ into $F_{2}$. As $f_{0}$ is the limit
of the homeomorphisms $\{f_{n}\}$, we could make the analogous changes to
$f_{n}$, for all large enough values of $n$. The geodesic arcs obtained when
we do this for $f_{n}$ will all be disjoint. We conclude that there is a new
sequence $f_{n}^{\prime}$ of homeomorphisms such that the limit of
$f_{n}^{\prime}\mid\tau^{1}$ is $f_{0}^{\prime}$. As no edge of $\tau$ is
lengthened by this construction, we have $E_{S}(f_{n}^{\prime})\leq
E_{S}(f_{n})$, for each $n.$ If there is any edge of $\tau$ whose image under
$f_{0}$ meets $D$ in a non-geodesic arc, our construction of $f_{0}^{\prime}$
will strictly shorten the image of this edge, so that the limit of the
sequence $\{E_{S}(f_{n}^{\prime})\}$ is strictly less than $I(f)$, which is a
contradiction. We deduce that $f_{0}(\tau^{1})$ must meet $D$ in geodesic
arcs, which completes the proof of the claim.
\end{proof}

Before proceeding further, we consider the special case when $\tau$ is a
$1$--vertex triangulation of $F_{1}$, and $F_{2}$ has a Riemannian metric of
negative curvature.

\begin{theorem}
\label{1-vertextriangulation}Let $F_{1}$ and $F_{2}$ be closed surfaces with
negative Euler number. Let $\tau$ be a good $1$--vertex triangulation of
$F_{1}$, let $h$ be a negatively curved Riemannian metric on $F_{2}$, and let
$f:(F_{1},\tau,l)\rightarrow(F_{2},h)$ be a homeomorphism. Then the following hold:

\begin{enumerate}
\item $f$ is isotopic to a simplicial homeomorphism.

\item The unique least energy simplicial map homotopic to $f$ is a simplicial
homeomorphism and is isotopic to $f$.

\item Any homeomorphism from $F_{1}$ to $F_{2}$ which is homotopic to $f$ must
be isotopic to $f$.
\end{enumerate}
\end{theorem}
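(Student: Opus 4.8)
The plan is to reduce all three parts to the single geometric fact that \emph{straightening the edges of a good $1$--vertex triangulation of $F_{2}$, with respect to the negatively curved metric $h$, again produces a $1$--vertex triangulation of $F_{2}$ of the same combinatorial type.} Since $F_{1}$ and $F_{2}$ have negative Euler characteristic the homeomorphism $f$ is nontrivial, so Proposition~\ref{uniqueharmonicmapsexist} already supplies a unique simplicial harmonic map $g$ in the homotopy class of $f$, and $g$ is the unique least-energy simplicial map there. Any simplicial map homotopic to $f$ --- in particular $g$ --- sends $v$ to a point $Q$ and each edge $e_{i}$ to the geodesic loop at $Q$ in a definite homotopy class rel $v$ (the class of $f(e_{i})$ transported along the track of the homotopy). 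Using a point-pushing isotopy one produces a homeomorphism $f'$ isotopic to $f$ with $f'(v)=Q$ and with $f'(e_{i})$ in that same class rel $v$; then the straightening of $f'$ agrees with $g$ on the $1$--skeleton, hence, having the same cone structure, equals $g$. So it is enough to prove this fact for an arbitrary homeomorphism $f'$: applied to $f$ itself it gives part (1), and applied to the $f'$ above it shows $g$ is a homeomorphism isotopic to $f$, which is part (2).

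To prove the geometric fact I would pass to the universal cover $\widetilde{F_{2}}$. Because $\tau$ is good, the loops $f'(e_{1}),\dots,f'(e_{r})$ are essential, any two of them meet only at $P'=f'(v)$, and the $1$--skeleton $f'(\tau^{1})$ has no monogon (no edge is null-homotopic) and no bigon (no union of two edges is null-homotopic), so $f'(\tau^{1})$ is in minimal position in $(F_{2},h)$. Straightening each $f'(e_{i})$ to its geodesic loop $g_{i}$ at $P'$ keeps minimal position: a bigon or monogon bounded by geodesic arcs cannot occur in the negatively curved plane $\widetilde{F_{2}}$ by Gauss--Bonnet, and coincident or self-crossing straightenings would again force a null-homotopic edge or union of two edges. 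Consequently the $g_{i}$ are simple, pairwise meet only at $P'$, and, being a system homotopic rel $P'$ to the minimal-position system $f'(\tau^{1})$, are carried to it by an ambient isotopy of $F_{2}$; in particular they cut $F_{2}$ into triangles matching those of $\tau$. Cone-extending, $f'_{1}$ is a homeomorphism isotopic to $f'$, and the fact is proved.

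Part (3) is then immediate from the classical theorem that homotopic homeomorphisms of a compact surface are isotopic: if $g_{1},g_{2}\colon F_{1}\to F_{2}$ are homeomorphisms homotopic to $f$, then $g_{2}^{-1}g_{1}$ is homotopic, hence isotopic, to $\mathrm{id}_{F_{1}}$, so $g_{1}$ is isotopic to $g_{2}$ and in particular to $f$. The main obstacle is the universal-cover step: its real content is the standard but not entirely trivial principle that in nonpositive curvature geodesic representatives realize minimal position, and one must check carefully that the "good" hypothesis is exactly what puts the starting $1$--skeleton into minimal position (so that the principle has something to preserve) and rules out degenerate straightenings such as coincident or nested geodesic edges. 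The point-pushing identification of $g$ with the straightening of a homeomorphism isotopic to $f$ in the reduction also needs a small verification that the relevant basepoint conjugations match.
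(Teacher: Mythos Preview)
Your overall strategy is genuinely different from the paper's, and the reduction step (point-pushing to realize the harmonic map $g$ as the straightening of a homeomorphism $f'$ isotopic to $f$) is correct and elegant. The paper instead takes an energy-minimizing sequence $\{f_n\}$ of \emph{homeomorphisms} isotopic to $f$, uses Lemma~\ref{edgesaremappedtogeodesics} to get a limit $f_0$ sending each edge to a geodesic, and then argues variationally: if $f_0$ fails to embed $\tau^1$, the vertex image must coincide with an interior point of an edge, and since $f_0$ is a limit of homeomorphisms all the edge-germs at $v$ lie on one side of that edge, so sliding $f_0(v)$ to that side strictly shortens every edge. This produces homeomorphisms of energy below $I(f)$, a contradiction. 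Parts~(2) and~(3) then fall out, and in particular the paper \emph{derives} the classical ``homotopic implies isotopic'' theorem rather than invoking it as you do.

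The gap in your argument is the minimal-position step. Gauss--Bonnet in $\widetilde{F_2}$ shows that no two geodesic arcs bound a bigon, but two geodesic segments in a negatively curved plane can still cross once without any bigon; ``no bigons'' does not give ``disjoint''. The theorem that geodesic representatives realize minimal intersection is for \emph{free} homotopy classes of closed curves, and does not transfer directly to based loops or to arcs with fixed endpoints in the universal cover: given disjoint arcs $\gamma_{AB}$ and $\gamma_{CD}$ in $\widetilde{F_2}$ with four distinct endpoints, the geodesic segments $[A,B]$ and $[C,D]$ may perfectly well cross. What prevents this here is the extra structure of being a \emph{triangulation}, but you do not use it, and your concluding step (``homotopic rel $P'$ to the minimal-position system $f'(\tau^1)$, hence carried to it by ambient isotopy'') already presupposes the geodesic system is embedded. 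One can rescue your route---for instance by showing the straightened map is a proper local homeomorphism of universal covers (consistently oriented nondegenerate geodesic triangles) and hence a global homeomorphism---but that argument is not the one you sketched, and ruling out degenerate triangles still needs work. The paper's variational trick sidesteps all of this by exploiting the one-sidedness that comes from $f_0$ being a limit of homeomorphisms.
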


\begin{remark}
By taking $F_{1}$ equal to $F_{2}$ in part 3), we see that we have a new proof
of the classical fact that homotopic homeomorphisms of $F$ are isotopic, so
long as $F\ $has negative Euler number.
\end{remark}

\begin{proof}
1) Lemma \ref{edgesaremappedtogeodesics} tells us that there is a sequence
$\{f_{n}\}$ of homeomorphisms from $F_{1}$ to $F_{2}$ each of which is
isotopic to $f$, and whose simplicial energies approach $I(f)$, such that the
restrictions of the $f_{n}$'s to $\tau^{1}$ converge to a map $f_{0}$ which
sends each edge of $\tau$ to a point or to a geodesic arc. Let $v$ denote the
vertex of $\tau$. As $\tau$ is a good $1$--vertex triangulation, $f_{0}$
cannot send an edge of $\tau$ to a point, and it must send distinct edges of
$\tau$ to distinct geodesics in $F_{2}$. In particular, the restriction of
$f_{0}$ to each edge of $\tau$ is an immersion. If $f_{0}$ fails to embed
$\tau^{1}$ in $F_{2}$, there must be an edge $e$ of $\tau$ with an interior
point $w$ such that $f_{0}(w)$ equals $f_{0}(v)$. See Figure~\ref{1vertex}. If
$U$ is a suitably small neighborhood of $f_{0}(v)$ in $F_{2}$, the image of
$\tau^{1}$ in $U$ consists of the image of a neighborhood $W$ of $w$ in $e$,
together with the image of a neighborhood $V$ of $v$ in $\tau^{1}$. As the
$f_{n}$'s are all homeomorphisms, $f_{0}(V)$ lies on one side of $f_{0}(W)$,
as shown in Figure~\ref{1vertex}.

\begin{figure}[ptbh]
\centering
\includegraphics[width=1in]{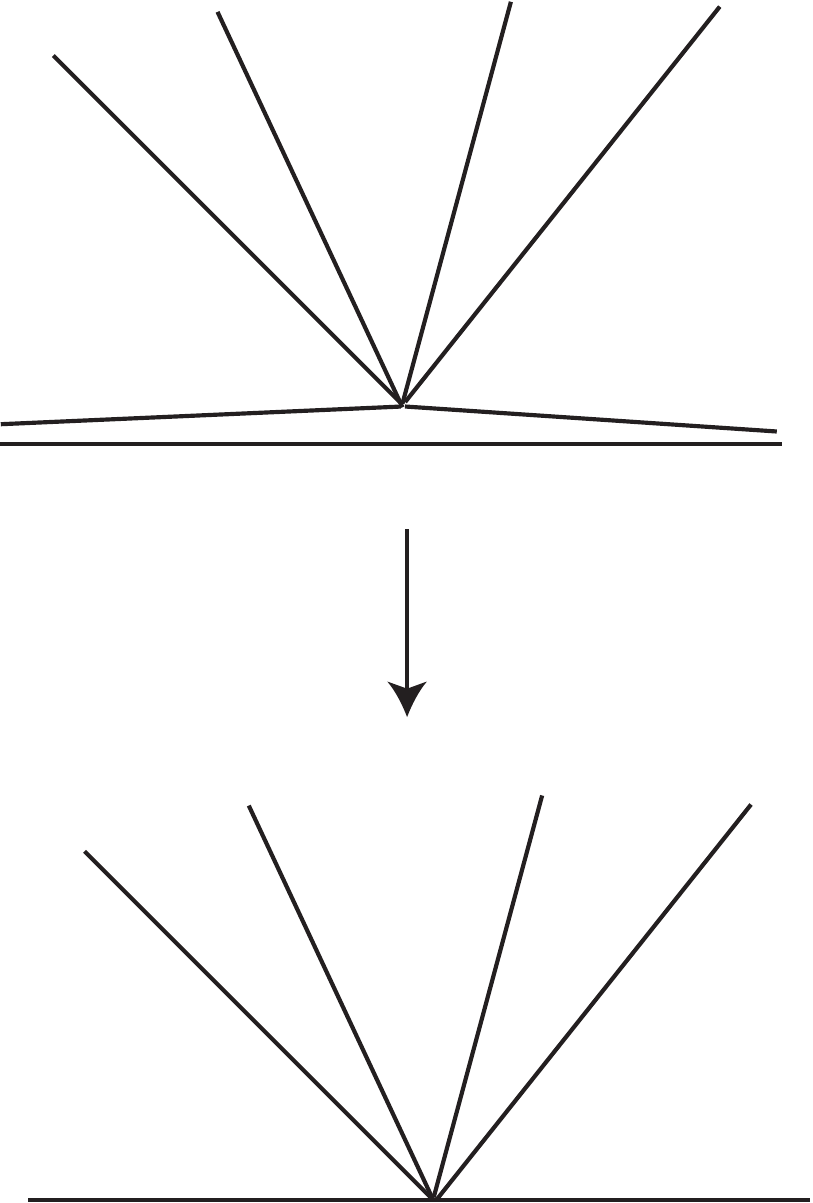} \caption{A vertex limiting to meet
the interior of an edge.}%
\label{1vertex}%
\end{figure}

Note that it is possible that some edges of $f_{0}(V)$ embed in $f_{0}(W)$. In
all cases, we will obtain a contradiction, implying that $f_{0}$ must embed
$\tau^{1}$ in $F_{2}$. Assuming this, as the $f_{n}$'s are all homeomorphisms,
and $f_{0}$ is the limit of the sequence $f_{n}\mid\tau^{1}$, there is a
unique extension of $f_{0}$ to a simplicial homeomorphism $g$ from
$(F_{1},\tau,l)$ to $(F_{2},h)$. The Alexander Trick applied to each triangle
then implies that $f$ is isotopic to this simplicial homeomorphism, as required.

To obtain a contradiction, we first consider the case where each edge of
$f_{0}(V)$ meets $f_{0}(W)$ only in $f_{0}(v)$. Now consider a homotopy of
$f_{0}$ which moves $f_{0}(v)$ a small distance orthogonally to $f_{0}(W)$ towards this side of
$f_{0}(W)$. Let $f_{0}^{\prime}$ denote the map so obtained. We further
choose $f_{0}^{\prime}$ to be equal to $f_{0}$ outside $V$, and so that
$f_{0}^{\prime}(V)$ consists of a union of geodesic arcs joining
$f_{0}^{\prime}(v)$ to $f_{0}(V)\cap\partial U$. This homotopy will shorten
the image of every edge of $\tau$, so that $f_{0}^{\prime}$ has strictly less
simplicial energy than $f_{0}$. As $f_{0}^{\prime}$ embeds $\tau^{1}$ in
$F_{2}$, it can be extended to a homeomorphism which is isotopic to $f$. As
the energy of $f_{0}^{\prime}$ is less than $I(f)$, this is the required contradiction.

Now suppose that some edges of $f_{0}(V)$ embed in $f_{0}(W)$. This implies
that a triangle of which $e$ is an edge must be collapsed onto $f_{0}(e)$. No
other triangles can collapse onto $f_{0}(e)$, as $f_{0}$ sends distinct edges
of $\tau$ to distinct geodesics in $F_{2}$. Now we apply the preceding
construction to obtain a map $f_{0}^{\prime}$ with strictly less simplicial
energy than $f_{0}$. Of course $f_{0}^{\prime}$ does not embed $\tau^{1}$ in
$F_{2}$, but by homotoping very slightly the edges collapsed onto $f_{0}(e)$,
we can obtain an embedding $f_{0}^{\prime\prime}$ of $\tau^{1}$ in $F_{2}$
with strictly less simplicial energy than $f_{0}$. Note that if $e^{\prime}$
is a geodesic segment in $F$, and we move one end $P$ of $e^{\prime}$ along a
geodesic orthogonal to $e^{\prime}$, then the derivative of the length of
$e^{\prime}$ with respect to the distance moved by $P$ is zero. This is why
$f_{0}^{\prime\prime}$ has strictly less simplicial energy than $f_{0}$, if we
move $f_{0}(v)$ a suitably small distance. As above this yields the required contradiction.

2) By construction, $g$ is a homeomorphism which is isotopic to $f$, and $g$
minimizes $E_{S}$ among all homeomorphisms isotopic to $f$. In particular, $g$
is a critical point for $E_{S}$ under all variations through homeomorphisms.
As any small deformation of $g$ must still be a homeomorphism, this implies
that $g$ is a critical point for $E_{S}$ under all variations. As $h$ is
negatively curved, it follows that $g$ is the unique least energy simplicial
map homotopic to $f$, as required.

3) Let $f_{1}$ denote a homeomorphism from $F_{1}$ to $F_{2}$ which is
homotopic to $f$. By parts 1) and 2), $f_{1}$ is isotopic to an energy
minimizing simplicial homeomorphism which must be $g$. In particular $f_{1}$
must be isotopic to $f$, as required.
\end{proof}

Now we return to the case of a general triangulation. The result we obtain is
similar to previous results obtained in the smooth and orientable setting.
Sampson \cite{Sampson} and Schoen and Yau \cite{SchoenYau:78} showed that
there is a least energy harmonic diffeomorphism in each homotopy class of
diffeomorphisms from a Riemannian surface to a surface of non-positive
curvature. Later it was shown by Jost and Schoen \cite{JostSchoen} that there
is a least energy harmonic diffeomorphism in each homotopy class of
diffeomorphisms between two Riemannian surfaces of genus at least $2$, even if
the target surface has some positive curvature. In \cite{CoronHelein}, Coron
and Helein showed that, in all cases, any harmonic diffeomorphism between two
Riemannian surfaces of genus at least $2$, must be energy minimizing in its
homotopy class, and is the unique energy minimizing map in that homotopy class.

\begin{theorem}
\label{harmonicsurfacehomeoexists} Suppose that $F_{1}$ and $F_{2}$ are closed
surfaces with negative Euler number. Let $f:(F_{1},\tau,l)\rightarrow
(F_{2},h)$ be a homeomorphism from $F_{1}$, with simplicial metric $l$, to
$F_{2}$ with a Riemannian metric of non-positive curvature. Further assume
that $\tau$ is a good triangulation. Then $f$ is isotopic to a simplicial
harmonic homeomorphism $g:(F_{1},\tau,l)\rightarrow(F_{2},h)$ that minimizes
$E_{S}$ among all homeomorphisms isotopic to $f$. Further, if $(F_{2},h)$ is
negatively curved, then $g$ is the unique simplicial harmonic map in the
homotopy class of $f$, and so minimizes energy in this homotopy class.
\end{theorem}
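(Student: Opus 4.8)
The plan is to follow the strategy of Theorem~\ref{1-vertextriangulation}, replacing the one‑vertex combinatorics there with a systematic analysis of how a limit of homeomorphisms can degenerate on a general good triangulation. First I would choose a sequence $\{f_n\}$ of homeomorphisms isotopic to $f$ with $E_S(f_n)\to I(f)$ and apply Lemma~\ref{edgesaremappedtogeodesics} to pass to a subsequence so that $f_n|\tau^1$ converges to a map $f_0\colon\tau^1\to F_2$ which sends each edge of $\tau$ to a point or to a geodesic arc and for which $E_S(f_0)=I(f)$ for any extension of $f_0$ over $F_1$. The whole theorem then reduces to one claim: $f_0|\tau^1$ is an embedding. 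Granting this, $f_0(\tau^1)$ is an embedded graph in $F_2$ with the vertex and edge counts of $\tau$; for each triangle $T$ of $\tau$ the curve $f_0(\partial T)$ is embedded (its three edges meet only at images of shared vertices) and null‑homotopic (it is the limit of $\partial f_n(T)$), hence bounds a disc, and since $\chi(F_1)=\chi(F_2)$ a count of complementary regions shows these discs tile $F_2$. Coning $f_0$ over each triangle from its preferred vertex produces a simplicial map $g$, which is a homeomorphism by the Alexander trick applied on each triangle, is isotopic to $f$, and has $E_S(g)=I(f)$. Because every sufficiently small variation of a homeomorphism is again a homeomorphism, $g$ is a critical point of $E_S$ under all variations, so $g$ is simplicial harmonic. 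When $h$ is negatively curved, $f$ is nontrivial (as a homeomorphism it induces an isomorphism of the non‑cyclic group $\pi_1(F_2)$), so Proposition~\ref{uniqueharmonicmapsexist}(2) gives that $g$ is the unique simplicial harmonic map homotopic to $f$, and in particular minimizes energy in its homotopy class.

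The content is therefore the embedding claim, proved by ruling out in turn each way $f_0|\tau^1$ could fail to be injective. A transverse crossing of two edge images is automatically excluded, since it would persist for large $n$ and the $f_n(\tau)$ are embedded graphs. A vertex image landing in the interior of an edge image, or an edge image folding back on itself, is handled exactly as in the proof of part~(1) of Theorem~\ref{1-vertextriangulation}: near the offending point the star of the relevant vertex lies on one side of the edge (precisely because the $f_n$ are homeomorphisms, as in Figure~\ref{1vertex}), and pushing the vertex off to that side, or replacing the arcs near the fold by disjoint geodesic arcs, can be done compatibly for all large $n$, producing homeomorphisms isotopic to $f$ with no larger energies but strictly smaller energy in the limit — contradicting $E_S(f_0)=I(f)$. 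The remaining failures are ``collapse‑type'': an edge sent to a point, two distinct edges with equal or overlapping images, or two distinct vertices with the same image. Here I would first run the topological argument of Corollary~\ref{simplyessential}: the union of all triangles of $\tau$ mapped by $f_0$ to a point forms a subcomplex $\Delta$, each component of which has a disc regular neighborhood, because a trivialized monogon or bigon would contradict goodness while $f_*$ remains an isomorphism (so essential loops in $\tau^1$ have essential, non‑constant image under $f_0$, being freely homotopic for large $n$ to $f_n\circ\gamma\simeq f\circ\gamma$). Collapsing $\Delta$ gives a homeomorphic quotient with strictly fewer edges through which $f_0$ factors; to finish one must show $\Delta$ is empty, which I would do by a local energy‑decreasing modification of $f_n$ near $\Delta$ — opening a collapsed edge $e=vw$ by moving $f_n(v)$ to a nearby point chosen so that the weighted sum of image lengths of the other edges at $v$ decreases, which beats the $O(\varepsilon^2)$ cost of reintroducing $L_e$ — again contradicting minimality.

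I expect the main obstacle to be exactly this collapse‑type case, and in particular the single‑edge collapse: goodness supplies no essential loop to obstruct it, so one genuinely needs an explicit energy‑decreasing surgery on the $f_n$ near the shrinking image of the edge, carried out so that each modified map is still a homeomorphism isotopic to $f$, so that the triangulation structure is respected, and so that the energy drop is uniform in $n$. A related difficulty is organizing the case analysis so that the ``bad set'' of $f_0$ — which a priori could collapse many edges at once, send a whole subcomplex to a tree, or pinch along a curve meeting many triangles — is reduced to one of the finitely many local models above via an innermost or outermost configuration and treated in isolation. Once the embedding claim is secured, the remaining ingredients (the Alexander trick, the tiling count using $\chi(F_1)=\chi(F_2)$, the passage from energy‑minimal‑among‑homeomorphisms to critical‑under‑all‑variations, and the appeal to Proposition~\ref{uniqueharmonicmapsexist}) are routine.
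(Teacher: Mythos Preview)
Your overall framework matches the paper's: take an energy-minimizing sequence of homeomorphisms, pass to a limit $f_0$ on $\tau^1$ via Lemma~\ref{edgesaremappedtogeodesics}, prove $f_0$ embeds $\tau^1$, and then deduce harmonicity and (in negative curvature) uniqueness exactly as you describe. The reduction and the concluding steps are correct and essentially identical to the paper's.

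The gap is in your treatment of the collapse case. You propose to rule out a collapsed edge $e=vw$ by moving $f_0(v)$ in a direction that decreases the weighted sum of lengths of the other edges at $v$, the $O(\varepsilon^2)$ cost of reopening $e$ being beaten by this first-order gain. But such a direction need not exist: nothing prevents the tangent vectors of the remaining edges at $v$ from being balanced so that every direction gives zero first-order change. The paper's Lemma~\ref{singleedgecollapsed} and its extension Lemma~\ref{manyedgescollapsed} extract what minimality actually forces here: since $f_0$ is a limit of homeomorphisms, the tangent directions at $v$ and at $w$ separate into two arcs on the unit tangent circle, and if no energy-decreasing push exists these arcs must be diametrically opposite singletons, so every edge incident to the collapsed complex $K$ lies on a single geodesic $\lambda$. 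This is still not a contradiction; one must then pass to the maximal $\lambda$-degenerate subcomplex $L$, study a boundary circle $C$ of the associated surface piece, and use a total-curvature argument in the universal cover (Lemma~\ref{Chas0or2criticalpoints}) to show that $\widetilde{f_0}\mid\widetilde{C}\to\Lambda$ has at most two critical points. Only then does goodness of $\tau$ guarantee a non-critical boundary vertex at which the edges outside $L$ lie strictly on one side of $\lambda$, and pushing that vertex off $\lambda$ finally yields the energy decrease (edges in $L$ change length only to second order). Your sketch jumps from ``edge collapsed'' directly to ``energy-decreasing move exists'' without this intermediate structure, and the same issue recurs in your vertex-on-edge case: if some edge at the vertex already has image in $f_0(e)$, the one-vertex push does not shorten it, and one is thrown back into the $\lambda$-degenerate analysis (the paper's proof of Lemma~\ref{f0isahomeomorphism} makes exactly this reduction).
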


\begin{proof}
Recall that $I(f)$ denotes the infimum of the simplicial energies of all
homeomorphisms isotopic to $f$, and that $\tau^{1}$ denotes the $1$--skeleton
of $\tau$.

Lemma \ref{edgesaremappedtogeodesics} tells us that there is a sequence
$\{f_{n}\}$ of homeomorphisms from $F_{1}$ to $F_{2}$ each of which is
isotopic to $f$, and whose simplicial energies approach $I(f)$, such that the
restrictions of the $f_{n}$'s to $\tau^{1}$ converge to a map $f_{0}$ which
sends each edge of $\tau$ to a point or to a geodesic arc. In Lemma
\ref{f0isahomeomorphism} we will show that $f_{0}$ must be an embedding. As
the $f_{n}$'s are all homeomorphisms, and $f_{0}$ is the limit of the sequence
$f_{n}\mid\tau^{1}$, there is an extension of $f_{0}$ to a simplicial
homeomorphism $g$ from $(F_{1},\tau,l)$ to $(F_{2},h)$. By construction, $g$
is a homeomorphism which is isotopic to $f$, and $g$ minimizes $E_{S}$ among
all homeomorphisms isotopic to $f$. In particular, $g$ is a critical point for
$E_{S}$ under all variations through homeomorphisms. As any small deformation
of $g$ must still be a homeomorphism, this implies that $g$ is a critical
point for $E_{S}$ under all variations. Thus $g$ is simplicial harmonic. This
completes the proof of the first part of the theorem.

If $(F_{2},h)$ has negative curvature, Proposition
\ref{uniqueharmonicmapsexist} tells us that the homotopy class of $f$ contains
only one simplicial harmonic map, and that this map minimizes $E_{S}$ in the
homotopy class. It follows that $g$ is this map, which completes the proof of
the theorem subject to giving the proof of Lemma \ref{f0isahomeomorphism}.
\end{proof}

In order to show that $f_{0}$ must be an embedding, we need to analyze how
this might fail. First we consider what happens if an edge of $\tau$ is
collapsed to a point by $f_{0}$. Note that, as $\tau$ is good, this cannot
happen for an edge which is a loop. Before dealing with the general case, we
consider the following special case.

\begin{lemma}
\label{singleedgecollapsed}Suppose that $F_{1}$ and $F_{2}$ are closed
surfaces with negative Euler number. Let $f:(F_{1},\tau,l)\rightarrow
(F_{2},h)$ be a homeomorphism from $F_{1}$, with simplicial metric $l$, to
$F_{2}$ with Riemannian metric $h$, of non-positive curvature. Let $f_{0}%
:\tau^{1}\rightarrow F_{2}$ be a map which is the limit of the restriction to
$\tau^{1}$ of a sequence of homeomorphisms from $F_{1}$ to $F_{2}$, each of
which is isotopic to $f$, and whose simplicial energies approach $I(f)$.

Suppose that there is an edge $e$ of $\tau$ which is mapped to a point by
$f_{0}$, such that none of the edges incident to $e$ is mapped to a point.
Then there is a geodesic $\lambda$ through $f_{0}(e)$ such that every edge of
$\tau$ incident to $e$ has image contained in $\lambda$. Hence $f_{0}$ maps
every edge in the star of $e$ into $\lambda$.
\end{lemma}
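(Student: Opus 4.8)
The plan is to first determine how $f_0$ behaves on the two triangles $T_1=abc$ and $T_2=abd$ containing $e$, and then to play energy-minimality against the fact that $f_0$ is a limit of homeomorphisms in order to force the resulting geodesics to be colinear at $p:=f_0(e)$.

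First I would show that $f_0$ folds each of $T_1,T_2$ onto a single geodesic arc. Since $f_0$ sends each edge of $\tau$ to a point or a geodesic arc (Lemma~\ref{edgesaremappedtogeodesics}) and neither $ac$ nor $bc$ is collapsed, $f_0(ac)$ and $f_0(bc)$ are geodesic arcs from $p$ to $f_0(c)$. The boundary circuit $ab\cup bc\cup ca$ of $T_1$ bounds a disc in $F_1$, so its $f_0$-image is null-homotopic in $F_2$; as $f_0$ collapses $ab$ to $p$, this means $f_0(ac)$ and $f_0(bc)$ are homotopic rel endpoints, and in a non-positively curved manifold each homotopy class rel endpoints contains a unique geodesic. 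Hence $f_0(ac)=f_0(bc)=:\gamma_c$ and likewise $f_0(ad)=f_0(bd)=:\gamma_d$. It now suffices to show that $\gamma_c$ and $\gamma_d$ leave $p$ in opposite directions (or the same direction): then the geodesic $\lambda$ carrying $\gamma_c\cup\gamma_d$ contains the images of $e,ac,bc,ad,bd$, so $f_0$ maps every edge in the star of $e$ into $\lambda$ (and the statement for the remaining edges incident to $e$ then follows by repeating the argument outward along the $1$-skeleton).

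So suppose for contradiction that $\gamma_c,\gamma_d$ leave $p$ at an angle $\theta\in(0,\pi)$. Moving the common image $p$ of $a$ and $b$ (keeping $f_0(a)=f_0(b)$, so $e$ stays collapsed) a short distance in a direction $v$ changes the length of each edge $e_k$ incident to $e$ to first order by $-\langle v,\hat u_k\rangle$, where $\hat u_k$ is the direction at $p$ of $f_0(e_k)$; the first variation of $E_S$ is then $-2\langle v,\sum_k w_kL_k\hat u_k\rangle$, which must vanish for all $v$, so $\sum_k w_kL_k\hat u_k=0$. Moving $f_0(a)$ and $f_0(b)$ independently is restricted, however: each $f_n$ is an orientation-preserving homeomorphism with $f_n(T_1),f_n(T_2)$ on opposite sides of the arc $f_n(e)$, and $f_n(e)$ shrinks to $p$, so a perturbation remains a limit of homeomorphisms isotopic to $f$ only if the moved vertices stay on the correct sides of the geodesics $\lambda_c\supset\gamma_c$ and $\lambda_d\supset\gamma_d$ --- confining the admissible pairs $(v_a,v_b)$ to those with $v_b-v_a$ in a fixed wedge $\Omega$ at $p$. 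Combining the vanishing first variation with admissibility over $\Omega$ forces $S_A:=\sum_{e_k\ni a}w_kL_k\hat u_k$ into the dual cone of $\Omega$, and symmetrically at $b$; but $f_0$ being a limit of orientation-preserving homeomorphisms makes the edges at $a$ appear at $p$ in the cyclic order they have around $a$, so, $\gamma_c$ and $\gamma_d$ bounding the slot of the collapsed edge $e$, the other edges at $a$ sweep an arc of directions at $p$ of total angle $2\pi-\theta$ with $\hat u_c,\hat u_d$ at its ends. When $0<\theta<\pi$ this is incompatible with $S_A$ lying in the dual cone of $\Omega$ unless some admissible $w\in\Omega$ gives a joint move of $f_0(a),f_0(b)$ with strictly negative first variation --- contradicting $E_S(f_0)=I(f)$. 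Hence $\theta\in\{0,\pi\}$, $\gamma_c\cup\gamma_d$ lies on a single geodesic $\lambda$ through $p$, and $f_0$ maps every edge in the star of $e$ into $\lambda$.

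The main obstacle is precisely this last step: correctly pinning down which perturbations of $f_0$ stay limits of homeomorphisms isotopic to $f$ --- the two triangles on $e$ are degenerate (folded), so the motions of $f_0(a)$ and $f_0(b)$ are genuinely constrained --- and then the planar case analysis that excludes $0<\theta<\pi$. The folding computation and the first-variation bookkeeping are routine.
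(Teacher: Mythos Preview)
Your opening observation that $f_0(ac)=f_0(bc)$ and $f_0(ad)=f_0(bd)$ is correct and pleasant, but it sends you down a harder road than necessary, and two real gaps remain.

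First, the sufficiency claim is wrong. You only treat the four edges $ac,bc,ad,bd$ lying in the two triangles on $e$, but the lemma asserts that \emph{every} edge of $\tau$ incident to $e$ (i.e.\ every edge at $a$ or at $b$) lands in $\lambda$. There may be many such edges outside the star of $e$, and your ``repeating the argument outward along the $1$--skeleton'' does not work: the neighboring triangles do not contain the collapsed edge, so the folding argument does not propagate.

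Second, your variational step is both over-constrained and under-justified. You assert that independent motions of $f_0(a)$ and $f_0(b)$ are confined to a wedge in order to remain limits of homeomorphisms; this is never established, and in fact the paper's argument shows no such constraint is needed. The key simplification you are missing is this: move \emph{only one} vertex, say $v=a$, leaving $w=b$ fixed. The collapsed edge $e$ then acquires positive length, but since $L_e=0$ initially, the contribution $w_eL_e^2$ to the energy has zero first derivative. Hence the first variation involves only the other edges at $v$. Now use that $f_0$ is a limit of homeomorphisms in the simplest possible way: the tangent directions at $p$ of edges at $v$ occupy an arc $A\subset C$, those at $w$ the complementary arc $B$. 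If $A$ lies strictly inside a half-circle, moving $v$ toward the midpoint shortens every edge at $v$ (other than $e$), contradicting minimality. So the endpoints of $A$ are antipodal; a second application (edges at the endpoints of $A$ are orthogonal to the motion, hence stationary to first order) forces all directions to sit at those two antipodal points. This handles \emph{all} edges incident to $e$ at once and avoids any cone/dual-cone bookkeeping.
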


\begin{proof}
Let $v$ and $w$ denote the vertices of $e$. Let $C$ denote the unit circle in
the tangent space to $F_{2}$ at $f_{0}(e)$. Each edge $e^{\prime}$ of $\tau$
which is incident to $e$ determines a point of $C$ which corresponds to the
tangent vector to the image of $e^{\prime}$. As $f_{0}$ is a limit of
homeomorphisms, we can cut $C$ at two points into two closed intervals
$A\ $and $B$ such that one contains the points determined by edges of $\tau$
incident to $v$, and the other contains the points determined by edges of
$\tau$ incident to $w$.

Suppose that these endpoints of $A$ and $B$ are not diametrically opposite
points of $C$. Then we can cut $C$ at a pair of diametrically opposite points
into two closed intervals $L\ $and $R$ one of which contains one of $A$ or $B$
in its interior. Without loss of generality, we can suppose that $L$ contains
$A$ in its interior.

Now consider a homotopy of $f_{0}$ which moves $f_{0}(v)$ a small distance
towards the midpoint of $L$, and does not move any other vertices of $\tau$,
and let $f_{0}^{\prime}$ denote the map so obtained. If $e^{\prime}$ is a
geodesic segment in $F$, and we move one end $P$ of $e^{\prime}$ along a
geodesic making an angle $\theta$ with $e^{\prime}$, then the derivative of
the length of $e^{\prime}$ with respect to the distance moved by $P$ is
$\cos\theta$. In particular, if $e^{\prime}$ is an edge of $\tau$ other than
$e$, which is incident to $v$, it must be shortened by such a homotopy, as
$\theta$ is not equal to $\pi/2$. The homotopy does increase the length of
$e$, but the contribution of the length of $e$ to the energy of $f_{0}%
^{\prime}$ remains zero to first order. It follows that $f_{0}^{\prime}$ has
strictly less simplicial energy than $f_{0}$, if we move $f_{0}(v)$ a suitably
small distance. Further $f_{0}^{\prime}$ is a limit of homeomorphisms
$f_{n}^{\prime}$. But this contradicts the energy minimizing property of
$f_{0}$. Hence the endpoints of $A$ and $B$ must be diametrically opposite in
$C$.

If the interior of $A$ or the interior of $B$ contains any points
corresponding to edges incident to $e$, we can again obtain a contradiction by
essentially the same argument. Note that, as we move the vertex $v$, any edge
$e^{\prime}$ of $\tau$ incident to $v$ whose tangent vector is an endpoint of
$A$ will not change in length to first order, as the angle $\theta$ is equal
to $\pi/2$ for such $e^{\prime}$. We conclude that there are only two tangent
vectors determined by edges of $\tau$ incident to $e$ and that these two
points of $C$ are diametrically opposite. If $\lambda$ denotes the geodesic
through $f_{0}(e)$ with these tangent vectors, then it follows that every edge
incident to $e$ has image contained in $\lambda$. If $e^{\prime}$ is an edge
in the star of $e$ which is not incident to $e$, then there are two edges of
the star of $e$ which are incident to $e$, and which together with $e^{\prime
}$ bound a triangle of $\tau$. Hence each vertex of $e^{\prime}$ is mapped
into $\lambda$ by $f_{0}$, and $f_{0}(e^{\prime})$ is homotopic into $\lambda$
fixing the vertices of $e^{\prime}$. Now the non-positive curvature of $F_{2}$
implies that $f_{0}$ must map $e^{\prime}$ into $\lambda$, which completes the
proof of Lemma \ref{singleedgecollapsed}.
\end{proof}

In the above result, the boundary $\partial\sigma$ of any triangle $\sigma$ in
the star of $e$ is mapped by $f_{0}$ into the geodesic $\lambda$, and
$\partial\sigma$ is not mapped to a point. Intuitively, the picture is that as
$n\rightarrow\infty$, the sequence $f_{n}$ is collapsing the entire triangle
$\sigma$ into the geodesic $\lambda$. Unfortunately, by itself the condition
that $f_{0}$ maps the boundary of $\sigma$ into $\lambda$ is not enough to
ensure that there is an extension of $f_{0}$ to a map of $\sigma$ into
$\lambda$. For example if $\lambda$ were a simple closed curve, $f_{0}$ could
send $\partial\sigma$ to an essential loop in $\lambda$, and if $\lambda$ were
singular, $f_{0}$ need not even send $\partial\sigma$ to a loop in $\lambda$.
In order to clarify the situation, we use the fact that if $\Lambda$ is a
geodesic in the universal cover of $F_{2}$ which lies above $\lambda$, then
$\Lambda$ is an embedded line. This is because of the non-positive curvature
of $F_{2}$. This leads us to the following definition.

\begin{definition}
Let $\lambda$ be a geodesic in $F_{2}$. Then a triangle $\sigma$ in $\tau$ is
$\lambda$\emph{--degenerate under }$f_{0}$ if

\begin{enumerate}
\item $f_{0}$ maps $\partial\sigma$ into $\lambda$, and

\item the restriction of $f_{0}$ to $\partial\sigma$ lifts to a map of
$\partial\sigma$ into $\Lambda$.
\end{enumerate}
\end{definition}

Note that if $\sigma$ is $\lambda$--degenerate under $f_{0}$, then $f_{0}%
\mid\partial\sigma$ extends to a map of $\sigma$ into $\lambda$. The proof of
the preceding lemma shows that if $\sigma$ is a triangle in the star of $e$,
then $\sigma$ is $\lambda$--degenerate under $f_{0}$, but $\partial\sigma$ is
not mapped to a point by $f_{0}$.

Next we consider the general case when edges adjacent to $e$ may also be
collapsed to the point $f_{0}(e)$. We consider the subcomplex of $\tau$
consisting of all edges which are mapped to this point by $f_{0}$ together
with each $2$--simplex $\sigma$ such that $f_{0}(\partial\sigma)=f_{0}(e)$. As
the image of $f_{0}$ cannot be a point, this subcomplex is not equal to $\tau
$. Let $K$ denote the component of this subcomplex which contains $e$. Thus
$K$ is not a single point. Lemma \ref{singleedgecollapsed} is a special case
of the following result, and much of the proof is similar. We proved Lemma
\ref{singleedgecollapsed} separately to clarify the argument. Note that
$f_{0}$ is only defined on the $1$--skeleton of $K$, but in this setting we
will abuse notation and denote $f_{0}(e)$ by $f_{0}(K)$.

\begin{lemma}
\label{manyedgescollapsed}Using the above notation, there is a geodesic
$\lambda$ through $f_{0}(K)$ such that every triangle in the star of $K$ is
$\lambda$--degenerate under $f_{0}$.
\end{lemma}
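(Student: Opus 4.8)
The plan is to generalize the argument of Lemma \ref{singleedgecollapsed}. Write $P=f_{0}(K)$ for the common image of the $1$--skeleton of $K$, and let $C$ be the unit circle in $T_{P}F_{2}$. Call an edge of $\tau$ \emph{emanating} if it is not contained in $K$ but has at least one vertex in $K$; by hypothesis such an edge $e'$ is sent by $f_{0}$ to a non-constant geodesic arc issuing from $P$, hence determines a point $p_{e'}\in C$ (a pair of points in the exceptional case that both vertices of $e'$ lie in $K$, which is treated the same way). The crux of the proof is the claim that there is an antipodal pair $\{p,-p\}\subset C$ containing every $p_{e'}$. Granting this, let $\lambda$ be the complete geodesic through $P$ tangent to $p$; the claim then says precisely that $f_{0}$ maps every edge incident to $K$ into $\lambda$.

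Assuming the claim, I would deduce the lemma as follows. Let $\sigma$ be a triangle in the star of $K$; we may assume $\sigma\not\subset K$, since triangles of $K$ have boundary collapsed to $P\in\lambda$. The edges of $\partial\sigma$ that lie in $K$ map to $P$, and those incident to $K$ map into $\lambda$ by the claim. In the typical case $\sigma$ has exactly one vertex $v\in K$, so its two edges at $v$ are emanating and its third edge $e''$ has both endpoints on $\lambda$. Fixing a lift $\widetilde{v}\mapsto\widetilde{P}$ to the universal cover $\widetilde{F_{2}}$, the two emanating edges of $\sigma$ issue from $\widetilde{P}$ tangent to $\pm p$, so they lift to arcs of the single complete geodesic $\Lambda$ through $\widetilde{P}$ with that tangent line, and $\Lambda$ is an embedded line because $F_{2}$ is non-positively curved. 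Since $f_{0}(e'')$ is a geodesic arc joining the far endpoints of these arcs, both of which lie on the embedded geodesic $\Lambda$, it must be a subarc of $\Lambda$. Hence $f_{0}(e'')\subset\lambda$ and $f_{0}\mid\partial\sigma$ lifts into $\Lambda$, so $\sigma$ is $\lambda$--degenerate. The remaining ways $\sigma$ can meet $K$ --- notably when two vertices of $\sigma$ lie in $K$ --- are handled in the same way, using in the degenerate cases that a null-homotopic geodesic loop in a non-positively curved surface is constant.

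To prove the claim I would combine two ingredients. The first is the first-variation computation of Lemma \ref{singleedgecollapsed}: writing $E_{S}=\sum w_{i}L_{i}^{2}$ with the fixed, strictly positive weights of Equation~\eqref{edgesum} (here $l$ is a genuine simplicial metric on $F_{1}$, so no $l_{i}=0$), and noting that $L_{i}=0$ on every edge of $K$, one finds that moving $f_{0}(v)$ a distance $t$ along a geodesic in a direction $\xi\in C$ while fixing all other vertices changes $E_{S}$ at first order by $-2t\langle\xi,\sum_{e'}w_{e'}L_{e'}p_{e'}\rangle$, the sum taken over the emanating edges at $v$; the collapsed edges at $v$ lengthen linearly in $t$ and so contribute only at order $t^{2}$. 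The second ingredient is a topological constraint, coming from the fact that each $f_{n}$ is a homeomorphism pinching $K$ to the point $P$: the emanating directions at a vertex $v$ of $K$ are confined, as one reads around the link of $K$, to a controlled union of arcs of $C$, in the same way that the intervals $A$ and $B$ are confined in Lemma \ref{singleedgecollapsed}. Given both ingredients, the dichotomy of that lemma applies vertex by vertex --- if such an arc had span less than $\pi$ one pushes $f_{0}(v)$ into the free direction, and otherwise one pushes $f_{0}(v)$ perpendicular to its diameter to kill any emanating direction in its interior --- and each perturbation is realized by nearby homeomorphisms $f_{n}'$ isotopic to $f$ with $E_{S}(f_{n}')<E_{S}(f_{n})$ for large $n$, contradicting $E_{S}(f_{0})=I(f)$. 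This forces all emanating directions into a single antipodal pair $\{p,-p\}$.

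The principal obstacle is establishing the topological constraint in the form needed. One must analyze how the embeddings $f_{n}$ pinch the (possibly complicated) subcomplex $K$ --- e.g.\ by studying a regular neighborhood $N(K)$ and the way the embedded curves $f_{n}(\partial N(K))$ shrink toward $P$ --- in order to know both that the emanating directions are organized around the link of $K$ as in the single-edge case and that a vertex of $K$ can genuinely be pushed in the chosen direction $\xi$ through homeomorphisms isotopic to $f$. By contrast, the first-variation computation and the propagation from edges incident to $K$ to whole triangle boundaries via non-positive curvature (and the embeddedness of $\Lambda$) are straightforward adaptations of Lemma \ref{singleedgecollapsed}.
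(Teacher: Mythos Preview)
Your overall strategy matches the paper's: reduce to showing that every emanating direction at $K$ lies in a single antipodal pair of $C$, then propagate to third edges via non-positive curvature. Your deduction of $\lambda$--degeneracy from the claim is fine once you note that $f_{0}\mid\partial\sigma$ is null-homotopic (since $\sigma$ is a disc), so its lift closes up and the third edge must lift into $\Lambda$ by uniqueness of geodesics in $\widetilde{F_{2}}$.

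The genuine gap is exactly the point you flag as the ``principal obstacle'': you never establish the topological constraint that confines the emanating directions to arcs on which the first-variation argument bites. Your proposal to argue ``vertex by vertex'' does not work as stated, because at a general vertex $v$ of $K$ the link of $v$ in $K$ need not be connected, and then the emanating edges at $v$ are not forced to lie in a single arc of $C$; the dichotomy from Lemma~\ref{singleedgecollapsed} simply does not get started. The paper resolves this with a specific structural fact about $K$ that you are missing. Since $f_{0}$ is homotopic to the restriction of a homeomorphism, the inclusion $K\hookrightarrow F_{1}$ is $\pi_{1}$--trivial; one then argues (using that $f_{0}$ collapses $\partial R$ for any disc component $R$ of the complement, forcing $R\subset K$) that a regular neighborhood of $K$ is a disc, so $K$ is simply connected. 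A finite simply connected complex has at least two \emph{extreme} vertices of $\partial K$ that do not separate $K$, and such vertices have connected link in $K$. These two vertices $v,w$ play the role of the two endpoints of $e$ in Lemma~\ref{singleedgecollapsed}: because their links in $K$ are connected and $f_{0}$ is a limit of homeomorphisms, the emanating directions at $v$ and at $w$ land in complementary arcs $A,B\subset C$, and the energy argument forces both arcs to degenerate to an antipodal pair. Finally, the limit-of-homeomorphisms property propagates this from $v$ and $w$ to every vertex of $K$. Without this simply-connectedness/extreme-vertex step, your argument has no mechanism to produce the intervals $A$ and $B$, and the proof does not go through.
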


\begin{proof}
As $f_{0}$ is homotopic to the restriction of a homeomorphism, the inclusion
map of $K$ into $F_{1}$ must induce the trivial map of fundamental groups.
Thus if $N$ denotes a regular neighborhood of $K$, all except one of the
components of the closure of $F_{1}-N$ must be a disc, and the union of $N$
with these discs must itself be a disc. Suppose there is a disc component $B$
of the closure of $F_{1}-N$, and let $R\ $denote the component of $F_{1}-K$
which contains $B$. Then $\partial R$ is contained in $\tau^{1}\cap K$, so is
mapped to $f_{0}(K)$ by $f_{0}$. Hence for $n$ large, $\partial R$ is mapped
arbitrarily close to $f_{0}(K)$ by $f_{n}$. As $B$ is a disc and $f_{n}$ is a
homeomorphism, it follows that the same holds for any edge of $\tau^{1}$ which
is contained in $R$, so $f_{0}$ must map each such edge to $f_{0}(K)$. It
follows that $R$ itself must be contained in $K$, which is a contradiction. We
conclude that no component of the closure of $F_{1}-N$ can be a disc, so that
$N$ itself is a disc. In particular, $K$ must be simply connected. Now let
$\partial K$ denote the subcomplex in which $K$ intersects the closure of
$F_{1}-K$. As $K$ is simply connected, a vertex of $K\ $separates $K$ if and
only if it locally separates $K$. As $K$ is finite, there must be at least two
vertices of $\partial K$ which are "extreme" and so do not separate $K$. As
any such vertex does not locally separate $K$, it must have connected link in
$K$. Let $v$ and $w$ denote two such vertices of $\partial K$. Now we will
argue very much as in the proof of Lemma \ref{singleedgecollapsed}. See
Figure~\ref{Kcollapse}.

\begin{figure}[ptbh]
\centering
\includegraphics[width=4in]{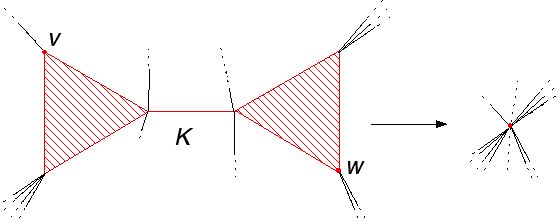} \caption{A subcomplex collapses to
a point.}%
\label{Kcollapse}%
\end{figure}

Let $C$ denote the unit circle in the tangent space to $F_{2}$ at $f_{0}(K)$.
Note that each edge of $\tau-K$ which is incident to $K$ cannot be mapped to a
point by $f_{0}$. Thus each such edge $e$ determines a point of $C$ which
corresponds to the tangent vector to the image of $e$. As $f_{0}$ is a limit
of homeomorphisms, and $v$ and $w$ have connected link in $K$, we can cut $C$
at two points into two closed intervals $A\ $and $B$ such that one contains
the points determined by edges of $\tau-K$ incident to $v$, and the other
contains the points determined by edges of $\tau-K$ incident to $w$. Suppose
that these endpoints of $A$ and $B$ are not diametrically opposite points of
$C$. Then we can cut $C$ at a pair of diametrically opposite points into two
intervals $L\ $and $R$ one of which contains one of $A$ or $B$ in its
interior. Without loss of generality, we can suppose that $L$ contains $A$ in
its interior. Now a homotopy of $f_{0}$ which moves $f_{0}(v)$ a suitably
small distance towards the midpoint of $L$, and does not move any other
vertices of $\tau$, will yield a map $f_{0}^{\prime}$ with strictly less
simplicial energy than $f_{0}$. For each edge of $\tau-K$ which is incident to
$v$ is shortened, and although edges of $K$ which are incident to $v$ increase
in length, the contribution of the length of each such edge to the energy of
$f_{0}^{\prime}$ remains zero to first order. Also all other edges are
unchanged. Further $f_{0}^{\prime}$ is a limit of homeomorphisms
$f_{n}^{\prime}$. As before, this contradicts the energy minimizing property
of $f_{0}$, so that the endpoints of $A$ and $B$ must be diametrically
opposite in $C$. But if the interior of $A$ or the interior of $B$ contains
any points corresponding to edges of $\tau-K$ incident to $v$ or $w$
respectively, we can obtain a contradiction by essentially the same argument.
Note that any edge of $\tau-K$ incident to $v$ whose tangent vector is an
endpoint of $A$ will not change in length to first order. We conclude that
there are only two tangent vectors determined by edges of $\tau-K$ incident to
$v$ or $w$ and that these two points of $C$ are diametrically opposite. If
$\lambda$ denotes the geodesic through $f_{0}(K)$ with these tangent vectors,
then it follows that every edge of $\tau-K$ incident to $v$ or to $w$ has
image contained in $\lambda$. As $f_{0}$ is a limit of homeomorphisms, it
follows that every edge of $\tau-K$ incident to $K$ has image contained in
$\lambda$. As in the proof of Lemma \ref{singleedgecollapsed}, it follows that
$f_{0}$ maps every edge in the star of $K$ into $\lambda$. It also follows
that every triangle in the star of $K$ is $\lambda$--degenerate under $f_{0}$,
which completes the proof of Lemma \ref{manyedgescollapsed}.
\end{proof}

Note that, in the above result, if $\sigma$ is a triangle in the star of $K$
which does not lie in $K$, then $\sigma$ is $\lambda$--degenerate under
$f_{0}$, but $\partial\sigma$ is not mapped to a point by $f_{0}$.

Now we simply suppose that we have a geodesic $\lambda$ in $F_{2}$ and a
triangle $\sigma$ of $\tau$ such that $\sigma$ is $\lambda$--degenerate under
$f_{0}$, but $\partial\sigma$ is not mapped to a point by $f_{0}$. We consider
the subcomplex of $\tau$ which consists of the union of all triangles which
are $\lambda$--degenerate under $f_{0}$, and let $L$ be the component of this
subcomplex which contains $\sigma$. Let $\partial L$ denote the intersection
of $L$ with the closure of $F_{1}-L$.

\begin{lemma}
\label{noedgeincidenttodLcollapses}Using the above notation, if $v$ is a point
of $\partial L$, and $e$ is an edge of $\tau$ which is incident to $v$, then
$f_{0}$ does not collapse $e$ to a point.
\end{lemma}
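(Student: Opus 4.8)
The plan is to argue by contradiction. Suppose some edge $e$ of $\tau$ incident to a vertex $v\in\partial L$ is collapsed by $f_{0}$, say $f_{0}(e)=p$; since $e$ is incident to $v$ this gives $f_{0}(v)=p$, and since $v$ is a vertex of some triangle of $L$ (recall $v\in\partial L\subseteq L$ and $L$ is a union of closed triangles) and $f_{0}$ carries the edges of that triangle into $\lambda$, we have $p\in\lambda$. As $e$ is collapsed by $f_{0}$, I would invoke Lemma~\ref{manyedgescollapsed} for the component $K$ of the subcomplex formed by all edges collapsed by $f_{0}$ together with all triangles whose boundary is collapsed by $f_{0}$, with $K$ chosen to contain $e$. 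This yields a geodesic $\mu$ through $f_{0}(K)=p$ such that every triangle in the star of $K$ is $\mu$--degenerate under $f_{0}$. Note that $v$ is a vertex of $e$, so $v\in K$, and hence every triangle of $\tau$ incident to $v$ lies in the star of $K$ and is therefore $\mu$--degenerate.

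The first step is to force $\mu=\lambda$. Suppose some triangle $\sigma$ of $L$ incident to $v$ has $f_{0}(\partial\sigma)\neq\{p\}$. Since $\sigma\subseteq L$ it is $\lambda$--degenerate, and by the previous paragraph it is $\mu$--degenerate, so $f_{0}(\partial\sigma)$ is a subset of $\lambda\cap\mu$ with at least two points. Lifting the loop $f_{0}|_{\partial\sigma}$ to the universal cover of $F_{2}$, the lift into $\Lambda$ given by $\lambda$--degeneracy and a lift into a geodesic $\widetilde\mu$ lying above $\mu$ given by $\mu$--degeneracy differ by a deck transformation $T$, so the geodesic lines $T(\Lambda)$ and $\widetilde\mu$ share at least two points. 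Because $F_{2}$ is non-positively curved, geodesics in its universal cover are embedded lines and two distinct such lines meet in at most one point; hence $T(\Lambda)=\widetilde\mu$, and projecting to $F_{2}$ gives $\mu=\lambda$. If instead $f_{0}(\partial\sigma)=\{p\}$ for every triangle $\sigma$ of $L$ incident to $v$, then each such $\sigma$ lies in $K$. In that case, either every triangle of $\tau$ incident to $v$ lies in $K$, so each is a constant map to $p\in\lambda$, hence $\lambda$--degenerate, and, sharing the vertex $v\in L$ with $L$, lies in $L$; then the open star of $v$ lies in $L$, contradicting $v\in\partial L$. Or some triangle $\sigma'$ incident to $v$ is not in $K$; then $\sigma'$ does not lie in $L$ (every triangle of $L$ incident to $v$ lies in $K$), it lies in the star of $K$ and so is $\mu$--degenerate, and since it is not $\lambda$--degenerate we get $\mu\neq\lambda$. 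A short argument with lifts to $\widetilde\mu$ shows that not both edges of $\sigma'$ at $v$ can be collapsed (otherwise $f_{0}|_{\partial\sigma'}$ would lift to a loop in $\widetilde\mu\cong\mathbb{R}$, hence be constant, forcing $\sigma'\subseteq K$), so $v$ has a non-collapsed incident edge. Moreover every non-collapsed edge at $v$ lies in two triangles, neither of them in $L$ (an edge lying in a triangle of $L$ at $v$ lies in a collapsed triangle of $K$ and so is itself collapsed); these triangles lie in the star of $K$, hence are $\mu$--degenerate, so every non-collapsed edge at $v$ has image in $\mu$. Now a variational argument modelled on the proofs of Lemmas~\ref{singleedgecollapsed} and~\ref{manyedgescollapsed} applies: moving $f_{0}(v)$ a suitably small distance along $\mu$ in an appropriate direction strictly decreases the simplicial energy, since the collapsed edges at $v$ contribute zero to the energy to first order while at least one non-collapsed edge is strictly shortened and none is lengthened, and the perturbed maps remain limits of homeomorphisms isotopic to $f$; this contradicts the minimality of $E_{S}(f_{0})=I(f)$.

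Finally, when $\mu=\lambda$, Lemma~\ref{manyedgescollapsed} tells us that every triangle in the star of $K$ is $\lambda$--degenerate; each such triangle shares the vertex $v\in L$ with $L$ and so lies in $L$. Thus the closed star of $K$, which contains an open neighbourhood of $v$ in $F_{1}$, is contained in $L$, contradicting $v\in\partial L=L\cap\overline{F_{1}-L}$. This completes the proof.

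The step I expect to be the main obstacle is the variational argument in the mixed case above, where $v$ is a boundary vertex of $K$, not every triangle of $L$ at $v$ is collapsed, and $\mu\neq\lambda$. The delicate point is to ensure that all the non-collapsed edges at $v$ emanate from $p$ on the same side along $\mu$, so that a single small motion of $f_{0}(v)$ shortens all of them; I would deduce this, exactly as in the proof of Lemma~\ref{manyedgescollapsed}, from the facts that $K$ is simply connected (the inclusion $K\hookrightarrow F_{1}$ inducing the trivial map on $\pi_{1}$, since $f_{0}$ is homotopic to a homeomorphism and $f_{0}(K)$ is a point) and that $f_{0}$ is a limit of homeomorphisms, which together constrain the cyclic arrangement of the images of the edges at $v$. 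Everything else is bookkeeping with $\Delta$--complex combinatorics together with the standard fact that the derivative of the length of a geodesic arc, as one endpoint moves along a geodesic making angle $\theta$ with the arc, is $\cos\theta$.
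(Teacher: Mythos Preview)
Your overall strategy is sound, and your Case~1 together with the final deduction (once $\mu=\lambda$) essentially match the paper's argument. The difficulty is Case~2b, where you conclude $\mu\neq\lambda$ and then attempt a variational argument: move $f_{0}(v)$ along $\mu$ so that ``at least one non-collapsed edge is strictly shortened and none is lengthened.'' This requires all non-collapsed edges at $v$ to point in the \emph{same} direction along $\mu$, and your appeal to the method of Lemma~\ref{manyedgescollapsed} does not supply this. In that lemma one moves an \emph{extreme} vertex of $K$ (one with connected link in $K$), and even there the conclusion is only that the tangent directions of the edges of $\tau-K$ are the two opposite directions of $\mu$, not that they all coincide. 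For a general vertex $v\in\partial K$ the non-collapsed edges may emanate in both directions along $\mu$, so moving $f_{0}(v)$ along $\mu$ will lengthen some while shortening others, and moving perpendicular to $\mu$ changes nothing to first order. The variational step does not go through as stated.

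The paper sidesteps Case~2 entirely. Instead of working locally at $v$, it exploits the connectedness of $L$: since $v\in K\cap L$ and the defining triangle $\sigma\subset L$ has an edge $E$ not collapsed (hence $E\not\subset K$), take a path in the $1$--skeleton of $L$ from $K$ to $E$. The first edge $E'$ of this path not in $K$ has a vertex in $K$, so $E'\subset\operatorname{star}(K)$, and $E'$ is not collapsed. Since $E'\subset L$ its image lies in $\lambda$, and since $E'\subset\operatorname{star}(K)$ its image lies in $\mu$; a nontrivial geodesic arc contained in two geodesics forces $\lambda=\mu$. This is exactly your Case~1 idea, but applied to an edge possibly far from $v$ inside $L$, so that no separate Case~2 ever arises. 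Once $\lambda=\mu$, every triangle in $\operatorname{star}(v)\subset\operatorname{star}(K)$ is $\lambda$--degenerate and meets $L$ at $v$, hence lies in $L$, contradicting $v\in\partial L$. (Your final paragraph asserts this for all triangles of $\operatorname{star}(K)$; that is slightly more than is true and more than is needed, since only those meeting $L$ are forced into $L$, but the triangles in $\operatorname{star}(v)$ suffice.)
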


\begin{proof}
If $f_{0}$ collapses $e$ to a point, we consider the subcomplex of $\tau$
consisting of all edges mapped to this point together with all $2$--simplices
whose boundary is mapped to this point, and let $K$ denote the component of
this subcomplex which contains $e$. Lemma \ref{manyedgescollapsed} tells us
that $f_{0}$ maps every edge in the star of $K$ into some geodesic $\mu$. As
$\partial\sigma$ is not collapsed to a point, it follows that $K$ does not
contain some edge $E$ of $\partial\sigma$. Pick a path in $L$ which joins $K$
to $E$. Some edge $E^{\prime}$ of this path must lie in the star of $K$ but
not in $K$. This edge $E^{\prime}$ is not collapsed to a point by $f_{0}$. As
$E^{\prime}$ lies in $L$ and in the star of $K$, the image under $f_{0}$ of
$E^{\prime}$ must lie in $\lambda$ and in $\mu$. It follows that $\lambda=\mu
$. As the star of $e$ is contained in the star of $K$, it follows that every
triangle of the star of $e$ is $\lambda$--degenerate under $f_{0}$, which
contradicts the fact that $v$ lies in $\partial L$. This contradiction
completes the proof of the lemma.
\end{proof}

Of course $L$ itself need not be a surface, but it will fail to be a surface
precisely at those vertices whose link is not connected. We temporarily remove
all such vertices and let $L_{0}$ denote the closure of the component of the
resulting object which contains $\sigma$. Thus $L_{0}$ also need not be a
surface but is obtained from a connected surface $U$ by (possibly) identifying
certain vertices in its boundary. Now we pick a boundary component $C$ of $U$.
Note that any edge of $C$ also lies in $\partial L$, and so is not collapsed
to a point by $f_{0}$. In what follows we will assume that $C$ embeds in
$F_{1}$. If this is not the case, we can push $C$ slightly into the interior
of $U$ to obtain an embedded circle $C^{\prime}$ in $F_{1}$, and then apply
the following arguments using $C^{\prime}$ in place of $C$.

Let $\widetilde{F_{1}}$ and $\widetilde{F_{2}}$ denote the universal covers of
$F_{1}$ and $F_{2}$ respectively, where $\widetilde{F_{1}}$ has the
triangulation $\widetilde{\tau}$ induced from $\tau$ and $\widetilde{F_{2}}$
has the Riemannian metric induced from $h$. Let $\widetilde{\tau}^{1}$ denote
the $1$--skeleton of $\widetilde{\tau}$, let $\widetilde{f_{0}}:\widetilde
{\tau}^{1}\rightarrow\widetilde{F_{2}}$ be a map which covers $f_{0}$, and let
$\Lambda$ be a geodesic in $\widetilde{F_{2}}$ above $\lambda$. Recall that,
as $F_{2}$ is non-positively curved, $\Lambda$ is an embedded line. Let
$\widetilde{C}$ denote a component of the pre-image in $\widetilde{F_{1}}$ of
$C$, chosen so that $\widetilde{f_{0}}(\widetilde{C})\subset\Lambda$, and let
$\widetilde{U}$ denote the component of the pre-image in $\widetilde{F_{1}}$
of $U$ which has $\widetilde{C}$ as a boundary component. Note that
$\widetilde{C}$ is a simple closed curve or a line embedded in $\widetilde
{F_{1}}$. As each triangle in $L$ is $\lambda$--degenerate under $f_{0}$, it
follows that the restriction of $\widetilde{f_{0}}$ to $\widetilde{C}$ maps
$\widetilde{C}$ into $\Lambda$.

\begin{lemma}
\label{Chas0or2criticalpoints} Using the above notation, one of the following holds:

\begin{enumerate}
\item The map $\widetilde{f_{0}}:\widetilde{C}\rightarrow\Lambda$ is a
homeomorphism, or

\item $\widetilde{C}$ is a circle, and the map $\widetilde{f_{0}}%
:\widetilde{C}\rightarrow\Lambda$ has precisely two critical points.
\end{enumerate}
\end{lemma}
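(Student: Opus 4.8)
The plan is to regard $\Lambda$ as a copy of $\mathbb{R}$ and to study $\rho:=\widetilde{f_0}|_{\widetilde C}\colon\widetilde C\to\Lambda\cong\mathbb{R}$. Since $\widetilde{f_0}$ sends each edge of $\widetilde\tau$ to a geodesic arc, and since by Lemma~\ref{noedgeincidenttodLcollapses} no edge of $\widetilde C$ (each being incident to a vertex of $\partial L$) is collapsed to a point, $\rho$ is continuous, piecewise linear, and \emph{strictly} monotone on each edge of $\widetilde C$. Hence $\rho$ fails to be locally injective at a point of $\widetilde C$ precisely when that point is a vertex of $\widetilde C$ at which the two incident edges point the same way along $\Lambda$, i.e.\ a strict local extremum of $\rho$; these are its critical points. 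As $C$ is an embedded simple closed curve in $F_1$, the component $\widetilde C$ of its preimage in $\widetilde{F_1}$ is either a line (if $C$ is essential) or a circle (if $C$ bounds a disc). In the circle case $\rho$, being non-constant on each edge, attains a maximum and a minimum at distinct vertices, so it has at least two critical points. In the line case $\widetilde{f_0}$ is equivariant under deck transformations, so a single critical point of $\rho$ produces infinitely many; thus $\rho$ has either no critical point or infinitely many. Consequently the whole lemma follows once I show that $\rho$ has at most two critical points.

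Indeed, granting that bound: if $\widetilde C$ is a circle then $\rho$ has exactly two critical points, which is case~(2). If $\widetilde C$ is a line then $\rho$ has none, hence is strictly monotone. Moreover $\widetilde{f_0}$ intertwines the translation of $\widetilde C$ by $[C]$, of some length $T>0$, with the isometry $\psi:=f_\#([C])$ of $\widetilde{F_2}$; since $\psi$ carries the infinite set $\widetilde{f_0}(\widetilde C)\subseteq\Lambda$ into $\Lambda$, it carries the geodesic $\Lambda$ to a geodesic meeting it in a nondegenerate arc, hence preserves $\Lambda$, and since $\pi_1(F_2)$ is torsion free and acts freely on $\widetilde{F_2}$ it acts on $\Lambda\cong\mathbb{R}$ as a translation by some $t\neq0$. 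Thus $\rho(x+T)=\rho(x)+t$, so $\rho$ is proper; a strictly monotone proper continuous map $\mathbb{R}\to\mathbb{R}$ is a homeomorphism, so $\widetilde{f_0}\colon\widetilde C\to\Lambda$ is a homeomorphism, which is case~(1).

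It remains to bound the number of critical points, and here I would argue exactly as in the proofs of Lemmas~\ref{singleedgecollapsed} and~\ref{manyedgescollapsed}. Suppose $\rho$ has at least three critical points in the circle case (hence at least four, since local maxima and minima alternate around $\widetilde C$), or at least one in the line case. Choose a local maximum of $\rho$ at a vertex $v$ of $\widetilde C$ --- picked so that $\rho(v)$ is as large as possible among the critical vertices that the argument has to compare, working modulo the $[C]$--action in the line case --- and let $a_1,a_2$ be the two edges of $\widetilde C$ at $v$, both pointing downward along $\Lambda$ from $\widetilde{f_0}(v)$. Now homotope $f_0$ by moving only the vertex $\bar v$ of $\tau$ underlying $v$, equivalently sliding $\widetilde{f_0}(v)$ and all its deck translates a small distance downward along $\Lambda$; this strictly shortens $a_1$ and $a_2$. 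As in Lemma~\ref{manyedgescollapsed}, the only other edge lengths that change to first order are those of the remaining edges of $\tau$ at $\bar v$: the ones in $L$, whose images under $\widetilde{f_0}$ lie in $\Lambda$, and the ones not in $L$, whose images may leave $\lambda$. The purpose of choosing $v$ extremally, together with the $\lambda$--degeneracy of the triangles of $L$ at $v$, is to guarantee that each of these edges is either shortened or unchanged to first order, so that $E_S$ strictly decreases; since the same small perturbation applied to each $f_n$ yields a homeomorphism isotopic to $f$, this contradicts $E_S(f_0)=I(f)$, and the bound follows.

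The main obstacle is exactly this last piece of bookkeeping at the moved vertex. Because $v$ lies on $\partial L$, not every triangle of $\tau$ at $v$ is $\lambda$--degenerate, so some edges of $\tau$ at $v$ may leave $\lambda$, and there may be edges of $L$ at $v$ whose images point the ``wrong'' way along $\Lambda$; to make the variation energy decreasing one must choose the critical vertex to move \emph{and} the direction of the deformation, within the open cone of directions that still shorten $a_1$ and $a_2$, so that every remaining first-order length change is nonpositive --- the analogue of the ``extreme vertex'' and ``diametrically opposite intervals'' discussion in Lemmas~\ref{singleedgecollapsed} and~\ref{manyedgescollapsed}. The remaining ingredients --- strict monotonicity on edges, the equivariance computation, and recognising the circle case as case~(2) --- are routine.
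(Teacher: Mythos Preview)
Your reduction is clean and correct: treating $\rho=\widetilde{f_0}|_{\widetilde C}$ as a piecewise strictly monotone map to $\Lambda\cong\mathbb{R}$, identifying critical points with local extrema, splitting into the circle and line cases, and handling the equivariance in the line case are all fine. The issue is precisely the step you flag as ``the main obstacle'': bounding the number of critical points by an energy--decreasing variation at a critical vertex $v$.

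This is a genuine gap, not just bookkeeping, and the paper does \emph{not} close it this way. At a local maximum $v$ of $\rho$ on $\widetilde C$, the two $\widetilde C$--edges at $v$ indeed both point downward along $\Lambda$, but the remaining edges of $\tau$ at $v$ are of two kinds: those lying in $L$ (hence mapped into $\Lambda$) and those in the complementary half--star $D'$. For the first kind, nothing forces them all to point downward: interior vertices of $\widetilde U$ near $v$ can have $\rho$--value larger than $\rho(v)$, so some edges of $L$ at $v$ may point upward and are \emph{lengthened} by moving $v$ downward along $\Lambda$. For the second kind, the crucial fact used later in Lemma~\ref{f0isahomeomorphism} --- that all edges of $D'$ lie on a single side of $\lambda$ --- is proved there only for a \emph{non}-critical vertex; at a critical vertex the hairpin picture allows $D'$--edges on both sides of $\Lambda$, so there need be no direction that simultaneously shortens $a_1,a_2$ and does not lengthen some $D'$--edge. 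Choosing $v$ to maximise $\rho$ over the critical vertices of $\widetilde C$ does not help, since the competing edges are not edges of $\widetilde C$. So the variation you propose need not decrease $E_S$, and the argument stalls exactly where you say it does.

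The paper avoids this entirely. Instead of an energy argument, it works with the embedded approximation $\widetilde{f_n}(\widetilde C)\subset\widetilde{F_2}$ for large $n$: edges are nearly geodesic, and at each vertex the curve is either nearly straight or turns through nearly $\pi$, the latter being the critical vertices. The key geometric observation is that at every sharp bend the region $\widetilde U$ must lie on the \emph{concave} side (because the triangles on that side collapse into $\Lambda$ and hence lie in $\widetilde U$). Thus all sharp bends contribute turning of the \emph{same sign}. In the circle case the total turning of an embedded simple closed curve in a nonpositively curved plane is close to $0$ or $\pm 2\pi$, forcing exactly $0$ or $2$ sharp bends; $0$ is impossible since a circle cannot immerse in a line, giving case~(2). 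In the line case, properness of $\widetilde C$ forces long subarcs whose total turning is close to $0$, so there are no sharp bends and $\widetilde{f_0}|_{\widetilde C}$ is a homeomorphism, giving case~(1). This total--curvature argument is short and uses no energy variation at all; you should replace your third paragraph with it.
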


\begin{proof}
To prove this, choose a homeomorphism $f_{n}$, for large $n$, lift to the
universal cover and consider the induced map from $\widetilde{C}$ to
$\widetilde{F_{2}}$. This map embeds $\widetilde{C}$ very close to $\Lambda$,
and edges of $\widetilde{C}$ are almost geodesic. At each vertex $v$ of
$\widetilde{C}$, either $\widetilde{C}$ is almost straight, or it turns
through an angle of approximately $\pi$. The vertices of $\widetilde{C}$ where
the turn angle is approximately $\pi$ correspond to the critical points of
$\widetilde{f_{0}}$. At such a vertex, $\widetilde{U}$ must lie on the side of
$\widetilde{C}$ where the internal angle is almost zero. For any triangle of
$\widetilde{\tau}$ which is incident to $v$ and on that side of $\widetilde
{C}$ must be mapped to $\Lambda$ by $\widetilde{f_{0}}$ and hence must lie in
$\widetilde{U}$.

If $\widetilde{C}$ is a simple closed curve, we know it must have total
curvature close to $0$ or to $\pm2\pi$. Now each sharp bend contributes
essentially $\pm\pi$, and every other vertex and every edge contributes
essentially zero. Further the contributions of the sharp bends all have the
same sign, as the side of $\widetilde{C}$ where the internal angle is almost
zero always lies in $\widetilde{U}$. It follows immediately that
$\widetilde{f_{0}}$ has exactly zero or two critical points. In the first
case, $\widetilde{f_{0}}$ would map $\widetilde{C}$ to $\Lambda$ by an
immersion which is impossible. Thus part 2) of the lemma must hold.

If $\widetilde{C}$ is a line, it must be properly embedded, so for any
subinterval $I$ of $\Lambda$, there is a subinterval $J$ of $\widetilde{C}$
which \textquotedblleft starts at one end of $I$ and ends at the
other\textquotedblright. For such an interval $J$, the total curvature must be
close to zero. Again the contributions of the sharp bends all have the same
sign, so it follows immediately that $J$ has no critical points. Hence
$\widetilde{f_{0}}$ has no critical points, so that $\widetilde{f_{0}}$ maps
$\widetilde{C}$ to $\Lambda$ by a homeomorphism, showing that part 1) of the
lemma holds. This completes the proof of the lemma.
\end{proof}

We can now describe $U\ $and $L_{0}$ as follows. See Figure~\ref{collapse}.

\begin{lemma}
\label{Uisdiscorannulus}Using the above notation, one of the following holds:

\begin{enumerate}
\item $L_{0}$ is an annulus, $\lambda$ is a simple closed curve, and $f_{0}$
maps each component of $\partial L_{0}$ to $\lambda$ by a homeomorphism.

\item $L_{0}$ is a Moebius band, $\lambda$ is a simple closed curve, and
$f_{0}$ maps $\partial L_{0}$ to $\lambda$ by a double covering.

\item $U$ is a disc, and $f_{0}$ maps $\partial U=C$ to $\lambda$ with exactly
two critical points. Further, either $L_{0}$ is equal to $U$, or $L_{0}$ is
obtained from $U$ by identifying the two critical points of $C$. In the case
when $L_{0}$ is not equal to $U$, the geodesic $\lambda$ must be either a
simple closed curve or cross itself at the image of the critical points.
\end{enumerate}
\end{lemma}

\begin{figure}[ptbh]
\includegraphics[width=2in]{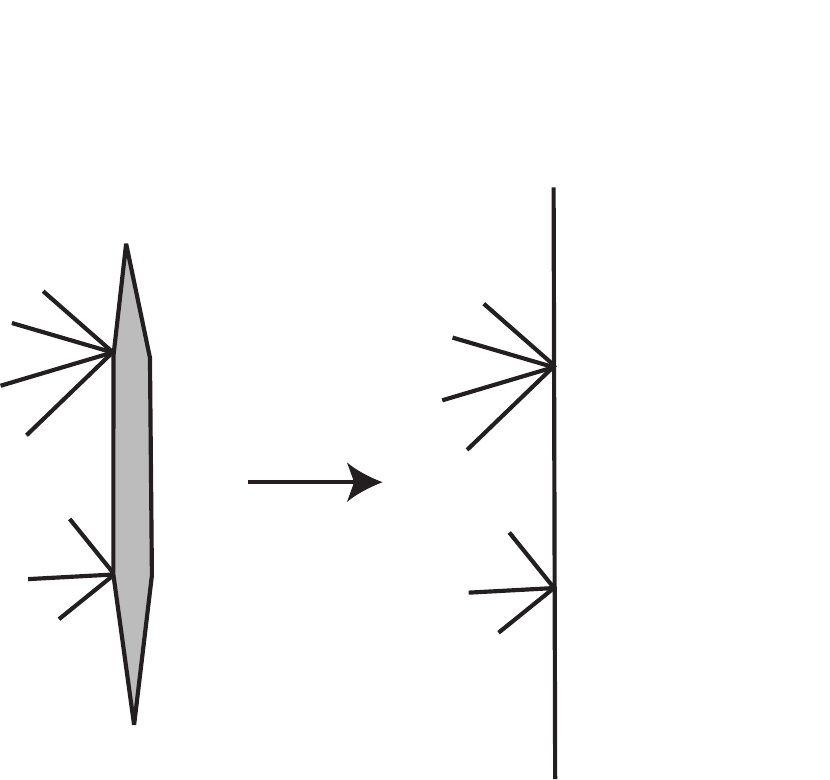}
\ \ \ \ \includegraphics[width=2in]{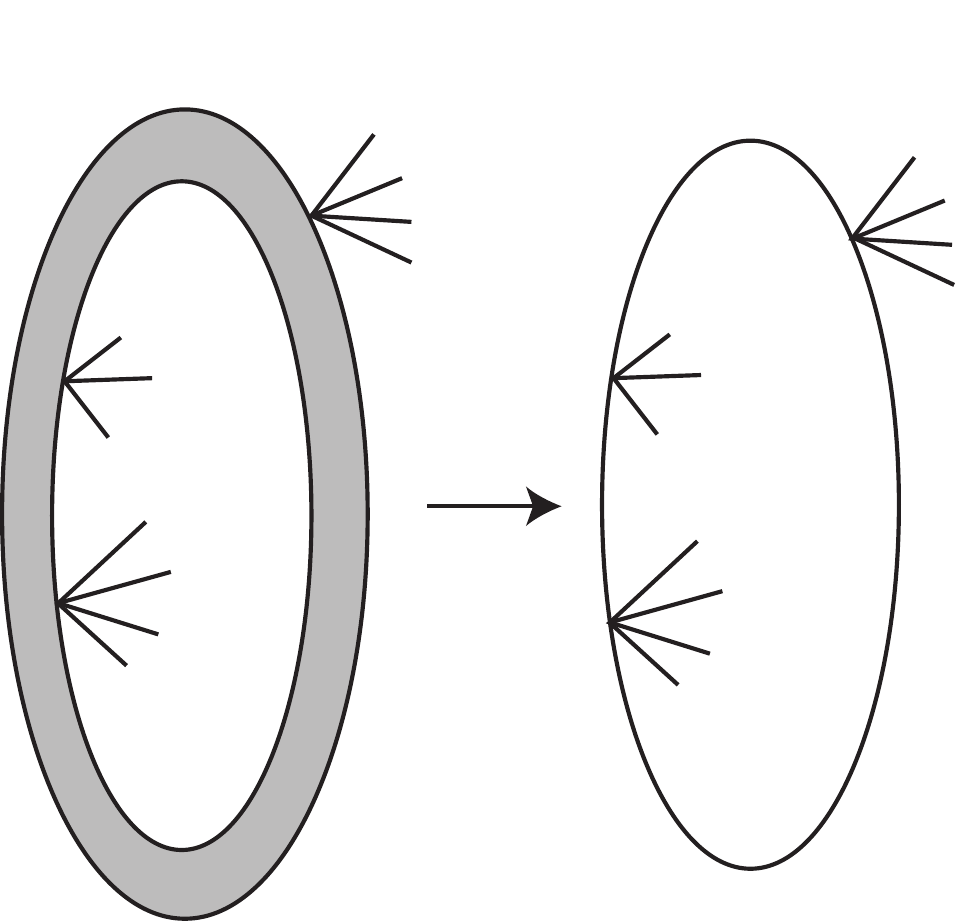} \caption{Two cases in which
$L_{0}$ collapses to a geodesic}%
\label{collapse}%
\end{figure}

\begin{proof}
We apply Lemma \ref{Chas0or2criticalpoints} to each boundary component of $U$.

Suppose that some boundary component $C$ of $U$ is such that the map
$\widetilde{f_{0}}:\widetilde{C}\rightarrow\Lambda$ is a homeomorphism. As $C$
is a simple closed curve, it follows that $\lambda$ is a simple closed curve
in $F_{2}$, and that $f_{0}\mid C:C\rightarrow\lambda$ is a covering map. As
$C$ is embedded in $F_{1}$, it follows that $f_{0}\mid C:C\rightarrow\lambda$
must be a homeomorphism or a double covering. Further in the double covering
case, $C$ must bound a Moebius band in $F_{1}$. It follows that the inclusion
of $U$ into $F_{1}$ maps $\pi_{1}(U)$ to an infinite cyclic subgroup of
$\pi_{1}(\lambda)$, and hence that $U$ must be homeomorphic to an annulus with
some discs removed, or to a Moebius band with some discs removed. As in the
proof of Lemma \ref{manyedgescollapsed}, it follows that no discs are removed
so that $U$ is an annulus or Moebius band. Now it follows that $L_{0}$ must
equal $U$ as it is not possible to have any points of $\partial U$ identified
in $F_{1}$. Thus we have cases 1) or 2) of the lemma.

Now suppose that every boundary component of $U$ maps to $\lambda$ with
exactly two critical points. We will show that we have case 3). Each boundary
component of $U$ is mapped to a null homotopic loop in $F_{2}$, and so must be
null homotopic in $F_{1}$. Hence each boundary component of $U$ bounds a disc
in $F_{1}$. As in the proof of Lemma \ref{manyedgescollapsed}, it follows that
$U$ is a disc. Further, it follows that $L_{0}$ is equal to $U$, or $L_{0}$ is
obtained from $U$ by identifying the two critical points of $C$, as required.
When $L_{0}$ is not equal to $U$, the image of $L_{0}$ in $\lambda$ is a
simple closed curve, so that either $\lambda$ is a simple closed curve or
$\lambda$ crosses itself at the image of the critical points. This completes
the proof that we have case 2).
\end{proof}

Now we are ready to prove that $f_{0}$ must be an embedding.

\begin{lemma}
\label{f0isahomeomorphism} Suppose that $F_{1}$ and $F_{2}$ are closed
surfaces with negative Euler number. Let $f:(F_{1},\tau,l)\rightarrow
(F_{2},h)$ be a homeomorphism from $F_{1}$, with simplicial metric $l$, to
$F_{2}$ with a Riemannian metric of non-positive curvature. Further assume
that $\tau$ is a good triangulation. Let $f_{0}:\tau^{1}\rightarrow F_{2}$ be
a map which is the limit of the restriction to $\tau^{1}$ of a sequence of
homeomorphisms from $F_{1}$ to $F_{2}$ each of which is isotopic to $f$, and
whose simplicial energies approach $I(f)$. Then $f_{0}$ is an embedding.
\end{lemma}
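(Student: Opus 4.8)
The plan is to assume $f_0$ is \emph{not} an embedding and contradict the minimality $E_S(f_0)=I(f)$. Since each $f_n$ is a homeomorphism, each $f_n\mid\tau^1$ is an embedding of the graph $\tau^1$, and a transverse intersection of two edge images is stable under small perturbations; so the limit $f_0$ cannot create a transverse crossing. By Lemma \ref{edgesaremappedtogeodesics}, $f_0$ sends each edge to a point or a geodesic arc. Hence injectivity of $f_0\mid\tau^1$ can fail only if (a) some edge of $\tau$ is collapsed to a point, or (b) no edge collapses but the images of two edges overlap along a common geodesic sub-arc. In case (a), Lemma \ref{manyedgescollapsed} applied to the component $K$ of the collapsing subcomplex produces a geodesic $\lambda$ through $f_0(K)$ such that every triangle in the star of $K$ is $\lambda$--degenerate; as $f_0$ is not constant, $K\ne\tau$, so a star triangle $\sigma$ not lying in $K$ satisfies the conclusion that $\sigma$ is $\lambda$--degenerate with $\partial\sigma$ not mapped to a point. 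In case (b), two overlapping geodesic arcs in $F_2$ lift to overlap along a segment of a common line in $\widetilde{F_2}$, so the triangle(s) of $\tau$ wedged between the two edges along the overlap have all their edges mapped into the geodesic $\lambda$ carrying the overlap and lift compatibly into that line; since no edge collapses, such a triangle is $\lambda$--degenerate with $\partial\sigma$ not collapsed. Either way we reach the standing configuration set up just before Lemma \ref{noedgeincidenttodLcollapses}.

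Next I would analyze the maximal degenerate region, exactly as in the excerpt preceding Lemma \ref{Chas0or2criticalpoints}: let $L$ be the component containing $\sigma$ of the subcomplex of all $\lambda$--degenerate triangles, and form the compact subsurface $U$ from the component $L_0$ of $L$ by separating non-manifold vertices. A geodesic in $F_2$ has one-dimensional image, so it cannot contain $f_0(\tau^1)$ (otherwise the nearby homeomorphisms $f_n$ would push the $1$--skeleton of $F_1$ into a neighbourhood of $\lambda$, which cannot fill the closed surface $F_2$); hence $L\ne\tau$ and $U$ has non-empty boundary. Fix a boundary component $C$ of $U$, made embedded in $F_1$ by a small push-in if necessary; each edge of $C$ lies on a $\lambda$--degenerate triangle, so $f_0(C)\subseteq\lambda$. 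Lemma \ref{noedgeincidenttodLcollapses} says no edge of $\tau$ meeting $\partial L$ is collapsed by $f_0$, and Lemma \ref{Chas0or2criticalpoints}, together with the orientability remark following it, pins down $f_0$ on $C$: either $\lambda$ is a simple closed geodesic and $f_0\mid C$ is a homeomorphism onto it, or $C$ folds onto a segment of $\lambda$ with exactly two turning points. In every case the structural facts I will use are that the triangles of $U$ incident to $C$ are $\lambda$--degenerate, so their edges map into $\lambda$, while the triangles of $\tau$ on the opposite side of $C$ genuinely leave $\lambda$ and, because the $f_n$ are homeomorphisms, do so on one fixed side of $\lambda$ near $C$ (as in the proof of Lemma \ref{Chas0or2criticalpoints}).

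The contradiction comes from an explicit energy-reducing deformation. I would move, simultaneously for every vertex $v$ of $C$, the point $f_0(v)$ a small distance $\delta>0$ along the geodesic perpendicular to $\lambda$ at $f_0(v)$, onto the side of $\lambda$ containing the images $f_n(\overline{F_1-U})$, keeping all other vertices fixed and re-straightening incident edges; performing the analogous move on each $f_n$ keeps $f_n$ a homeomorphism isotopic to $f$. For an edge whose image lies in $\lambda$ — the edges of $C$ themselves, and the other edges of the $\lambda$--degenerate triangles of $U$ meeting $C$ — the first variation of its length is $-\langle n,\dot e\rangle$ with $n\perp\lambda$ and $\dot e$ tangent to $\lambda$, hence zero, and non-positive curvature makes the second-order term non-negative and bounded. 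For an edge of $\tau$ on the far side of $C$, which leaves $f_0(v)$ transversally to $\lambda$ and into the chosen side, the first variation of its length is $-\langle n,\dot e\rangle<0$, so that edge is strictly shortened. Summing against the positive weights of Equation~\eqref{edgesum}, $E_S$ changes by $-c\,\delta+O(\delta^2)$ with $c>0$, so for small $\delta$ the perturbed homeomorphism has simplicial energy below $I(f)$, a contradiction. Therefore $f_0$ is an embedding, which completes the proof of Lemma \ref{f0isahomeomorphism} and hence of Theorem \ref{harmonicsurfacehomeoexists}.

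The step I expect to be the main obstacle is making the middle paragraph precise: showing the degenerate region $U$ is folded flat onto $\lambda$ in a way that leaves a strictly positive first-order gain when $C$ is pushed off $\lambda$. Concretely one must rule out that \emph{every} edge of $\tau$ across $C$ also lies in $\lambda$ — which would kill the term $-c\,\delta$ — and this is exactly where the embeddedness of the lift $\Lambda$ afforded by non-positive curvature, the fold structure of Lemma \ref{Chas0or2criticalpoints}, and the orientability of $F_1$ enter; the two-turning-points case of Lemma \ref{Chas0or2criticalpoints} and the non-embedded boundary curves (handled by the push-in) are the remaining technical wrinkles.
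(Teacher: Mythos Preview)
Your overall architecture matches the paper's: reduce to a $\lambda$--degenerate triangle, pass to the maximal degenerate region $L$, take a boundary circle $C$ of the associated subsurface $U$, and push a vertex off $\lambda$ to lower energy. The gap is in the deformation step.

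You propose to move \emph{all} vertices of $C$ simultaneously, each a distance $\delta$ along the perpendicular to $\lambda$, toward ``the side of $\lambda$ containing $f_n(\overline{F_1-U})$''. This breaks down precisely at the critical vertices of $f_0\mid C$ supplied by Lemma~\ref{Chas0or2criticalpoints}. At a critical vertex $v$ the two edges of $C$ at $v$ point the \emph{same} way along $\lambda$, so in the $f_n$ picture the interior angle on the non-$U$ side is nearly $2\pi$; the edges of $\tau$ on that side can therefore leave $f_0(v)$ on \emph{both} sides of $\lambda$, and some will be lengthened to first order by your perpendicular push. Moreover, in the two-critical-point case $f_n(C)$ is a thin embedded loop lying along a segment of $\lambda$, and non-critical vertices on the two strands of this loop have their non-$U$ sides on \emph{opposite} sides of $\lambda$; there is no single global ``chosen side'' to push toward. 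So the claimed first-order decrease $-c\,\delta$ is not established.

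The paper avoids both difficulties by moving a \emph{single} vertex $v$ of $C$ which is \emph{not} a critical point of $f_0\mid C$. At such a $v$ the two edges of $C$ go in opposite directions along $\lambda$, so the non-$U$ side of $\mathrm{star}(v)$ lies entirely on one side of $\lambda$; the paper then proves (this is the step you flag as the main obstacle) that no edge on that side can lie in $L$, hence every such edge strictly leaves $\lambda$ and is shortened by the push, while the edges in $L$ incident to $v$ lie in $\lambda$ and change length only to second order. The existence of a non-critical vertex is exactly where the \emph{good triangulation} hypothesis enters: in the two-critical-point case $\widetilde C$ is a circle, so $C$ is null-homotopic, and goodness forces $C$ to have at least three vertices. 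Your argument makes no visible use of this hypothesis, which is a further sign that the deformation you describe cannot work as stated.
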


\begin{proof}
Recall from Lemma \ref{edgesaremappedtogeodesics} that $f_{0}$ maps each edge
of $\tau$ to a point or to a geodesic arc. Suppose that $f_{0}$ is not an
embedding. As $f_{0}$ is a limit of homeomorphisms, it must fail to embed the
boundary of some triangle of $\tau$. Thus one of the following cases occurs.

\begin{enumerate}
\item Some vertex is mapped to the image of the interior of an immersed edge
of $\tau$,

\item some edge of $\tau$ is collapsed to a point by $f_{0}$, or

\item some triangle of $\tau$ is mapped to a geodesic segment, not a point.
\end{enumerate}

In case 1), there is a vertex $v$ and an immersed edge $e$ of $\tau$ with an
interior point $w$ such that $f_{0}(w)$ equals $f_{0}(v)$. The proof of part
1) of Theorem \ref{1-vertextriangulation} provides a contradiction if no edge
of $\tau$ incident to $v$ is mapped into $f_{0}(e)$ or to a point. See
Figure~\ref{1vertex}. It follows that if case 1) occurs, we must also be in
case 2) or case 3).

In case 2), if an edge $e$ of $\tau$ is collapsed to a point by $f_{0}$, we
define the subcomplex $K$ as we did just before Lemma \ref{manyedgescollapsed}%
. That lemma implies that there is a geodesic $\lambda$ such that if $\sigma$
is a triangle in the star of $K$ which does not lie in $K$, then $\sigma$ is
$\lambda$--degenerate under $f_{0}$, but $\partial\sigma$ is not mapped to a
point by $f_{0}$. Thus in all cases, there is a geodesic $\lambda$ in $F_{2}$
and a triangle $\sigma$ of $\tau$ such that $\partial\sigma$ is not collapsed
to a point by $f_{0}$, and $\sigma$ is $\lambda$--degenerate under $f_{0}$.

As in the preceding three lemmas, we define the subcomplexes $L$ and $L_{0}$
of $F_{1}$, and the surface $U$. Lemma \ref{Uisdiscorannulus} tells us that
$U$ is an annulus, Moebius band or disc. In each case, we claim there is at
least one vertex $v$ of $\partial U$ which is not a critical point of the map
$f_{0}\mid\partial U:\partial U\rightarrow\lambda$. This claim is trivial in
the first two cases. In the third case when $U$ is a disc, our hypothesis that
$\tau$ is a good triangulation implies that $\partial U$ must contain at least
three vertices, so that at least one of these vertices is not a critical
point, as claimed.

Let $v$ be any vertex of a component $C$ of $\partial U$ which is not a
critical point, and let $star(v)$ denote the star of $v$ in $F_{1}$. As $C$ is
a boundary component of $U$, the two edges of $C$ incident to $v$ together
divide $star(v)$ into two discs, one of which is contained in $U$, and hence
is contained in $L_{0}$. We denote this disc by $D$, and let $D^{\prime}$
denote the other disc. Thus $star(v)$ equals $D\cup D^{\prime}$, and $D\cap
D^{\prime}$ equals two edges of $C$. We claim that no edge of $\tau-D$ which
is incident to $v$ can lie in $L$. Let $e$ be an edge of $\tau-D$ which is
incident to $v$. Thus $e$ splits $D^{\prime}$ into two subdiscs $D_{1}$ and
$D_{2}$. Now suppose that $e$ lies in $L$, so that $f_{0}(e)\subset\lambda$.
Recall from Lemma \ref{noedgeincidenttodLcollapses}, that $f_{0}$ does not
collapse $e$ to a point. As $f_{0}$ is a limit of homeomorphisms, it follows
that $f_{0}$ maps all the edges in one of $D_{1}$ or $D_{2}$ into $\lambda$.
If $f_{0}(D_{i})\subset\lambda$, then $D_{i}$ must be contained in $L$. Hence
$D\cup D_{i}$ is contained in $L$. But this implies that the interior of an
edge of $C$ lies in the interior of $L$ which contradicts the definition of
$C$. It follows that no edge of $\tau-D$ which is incident to $v$ can lie in
$L$, as claimed. Recall again that Lemma \ref{noedgeincidenttodLcollapses}
shows that every edge incident to $v$ is not mapped to a point by $f_{0}$. As
$f_{0}$ is a limit of homeomorphisms, all the edges of $\tau-D$ which are
incident to $v$ must be mapped by $f_{0}$ to the same side of $\lambda$. Note
that this statement makes crucial use of the fact that $v$ is not a critical
point of the map from $\partial U$ to $\lambda$.

Now consider the sequence of homeomorphisms $f_{n}$ whose restriction to
$\tau^{1}$ converges to $f_{0}$. As $n\rightarrow\infty$, $L_{0}$ collapses
down to the geodesic $\lambda$. Thus every edge of $L_{0}$ either collapses to
a point or to a geodesic segment along $\lambda$. Consider a small homotopy of
$f_{0}$ that only moves vertices of $L_{0}$, moves these vertices in a
direction orthogonal to $\lambda$, and does not move the critical points of
$\partial U$, if any. The contribution to the energy from the edges of $L_{0}$
is constant to first order for sufficiently small such deformations. If such a
deformation moves points of $\partial L_{0}$ away from $L_{0}$, then it
strictly shortens each edge of $\tau$ that does not lie in $L_{0}$ and is
incident to $L_{0}$ at a noncritical point of $\partial L_{0}$. The
deformation will therefore shorten these edges at a rate bounded below by a
linear function of the distance moved. If $\partial U$ has no critical points,
we conclude that for large enough values of $n$, isotoping $f_{n}$ by moving a
component of $\partial L_{0}$ away from $L_{0}$ yields a homeomorphism
$f_{n}^{\prime}$ whose simplicial energy is strictly smaller than that of
$f_{0}$. This contradicts the energy minimizing property of $f_{0}$. In the
remaining cases when $U$ is a disk and $\partial U$ has two critical points,
they divide $\partial U$ into two intervals, and we apply the same argument to
an isotopy of $f_{n}$ which moves one of these intervals while fixing the
endpoints. This contradiction completes the proof of Lemma
\ref{f0isahomeomorphism}.
\end{proof}

We will end this section by discussing an application of the preceding
arguments. But first we note that the arguments of this section work perfectly
well if $F_{1}$ and $F_{2}$ have non-empty boundary, and in this case, one
need not insist that the metrics be strictly negatively curved to obtain
uniqueness. The result we obtain is the following.

\begin{theorem}
\label{harmonicsurfacehomeosexistrelboundary}Suppose that $F_{1}$ and $F_{2}$
are compact surfaces with non-empty boundary. Let $f:(F_{1},\tau
,l)\rightarrow(F_{2},h)$ be a homeomorphism from $F_{1}$, with simplicial
metric $l$, to $F_{2}$ with a non-positively curved Riemannian metric $h$ such
that $\partial F_{2}$ is locally convex. Further assume that $\tau$ is a good
triangulation. Then $f$ is isotopic rel $\partial F_{1}$ to a simplicial
harmonic homeomorphism $g:(F_{1},\tau,l)\rightarrow(F_{2},h)$ that minimizes
$E_{S}$ among all homeomorphisms isotopic to $f$ rel $\partial F_{1}$.
Further, $g$ is the unique simplicial harmonic map in the homotopy class of
$f$, and so minimizes energy in this homotopy class.
\end{theorem}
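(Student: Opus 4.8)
The plan is to run the argument of Theorem~\ref{harmonicsurfacehomeoexists}---and of Lemmas~\ref{edgesaremappedtogeodesics} through \ref{f0isahomeomorphism}---essentially verbatim, but with every homotopy and isotopy taken rel $\partial F_1$. Let $I(f)$ denote the infimum of $E_S$ over all homeomorphisms isotopic to $f$ rel $\partial F_1$. After an initial isotopy of $f$ I may assume that $f$ carries each boundary edge of $\tau$ to a geodesic arc; here the local convexity of $\partial F_2$ is what is used, so that short arcs joining boundary points can be straightened inside $F_2$. I then choose a sequence $\{f_n\}$ of homeomorphisms, each isotopic to $f$ rel $\partial F_1$ with $f_n\mid\partial F_1=f\mid\partial F_1$, whose simplicial energies tend to $I(f)$. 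Since $F_2$ is compact, Ascoli's theorem gives a subsequence whose restrictions to $\tau^1$ converge to a map $f_0:\tau^1\to F_2$, with $E_S(f_0)=I(f)$ for any extension.

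The first step is the boundary version of Lemma~\ref{edgesaremappedtogeodesics}: $f_0$ sends each edge of $\tau$ to a point or a geodesic arc. For an interior edge whose straightening competitor would pass close to $\partial F_2$, the local convexity of $\partial F_2$ guarantees the competitor can be taken inside $F_2$, so the shortening argument still applies; for boundary edges there is nothing to prove, since they are held fixed and are already geodesic. Note also that, because $f\mid\partial F_1$ is a homeomorphism and $\tau$ is good (so no boundary edge is a null-homotopic loop), no boundary edge of $\tau$ is collapsed by $f_0$.

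The heart of the proof is the boundary version of Lemma~\ref{f0isahomeomorphism}, that $f_0$ is an embedding; this rests on the analogues of Lemmas~\ref{singleedgecollapsed}, \ref{manyedgescollapsed} and \ref{Chas0or2criticalpoints}, and case~1 (a vertex meeting the interior of an edge) is disposed of as in Theorem~\ref{1-vertextriangulation}. The one genuinely new difficulty---and the step I expect to be the main obstacle---is that a collapsed subcomplex $K$, or a $\lambda$--degenerate subcomplex $L$, may meet $\partial F_1$. The key observation that makes the argument go through is that such a subcomplex can meet $\partial F_1$ in at most one vertex: if it met $\partial F_1$ in two, those two boundary vertices would have the same image under the homeomorphism $f\mid\partial F_1$, which is impossible. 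Consequently, when one selects the two ``extreme'' non-separating vertices of $\partial K$ (resp.\ of $\partial L$) at which to perform the energy--decreasing variation, at least one of them is an interior vertex of $F_1$ and may be moved freely; the pinned boundary vertex, if present, need not be moved, and its incident geodesic boundary edges are seen to lie in the relevant geodesic $\lambda$ by the same propagation argument as before. With $f_0$ an embedding, it extends over each triangle to a simplicial homeomorphism $g:(F_1,\tau,l)\to(F_2,h)$; by construction $E_S(g)=I(f)$, so $g$ minimizes $E_S$ among homeomorphisms isotopic to $f$ rel $\partial F_1$, and since every sufficiently small variation of $g$ rel $\partial F_1$ is again a homeomorphism, $g$ is a critical point of $E_S$ under all such variations, i.e.\ $g$ is simplicial harmonic. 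That $g$ is isotopic to $f$ rel $\partial F_1$ follows from the classical fact that homotopic homeomorphisms of a surface with $\chi<0$ are isotopic, applied rel $\partial F_1$.

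For the final assertion I use the boundary analogue of Proposition~\ref{uniqueharmonicmapsexist}. Existence of a simplicial--energy minimizer in the homotopy class of $f$ rel $\partial F_1$ is obtained by the same compactness argument, the only change being that the boundary vertices are now fixed, which only helps. For uniqueness, let $g_0$ and $g_1$ be simplicial harmonic maps homotopic rel $\partial F_1$, and let $f_t$ be the straight homotopy between them; since $f_t$ fixes $\partial F_1$, every boundary vertex of $\tau$ is stationary. As in Proposition~\ref{uniqueharmonicmapsexist}, $E_S(f_t)=\sum w_i L_i(t)^2$ is a convex function of $t$; if $f_0\neq f_1$, some interior vertex moves, and propagating along an edge path from that vertex toward the (fixed) boundary forces some $L_i(t)^2$ to be strictly convex, so $E_S(f_t)$ has a unique critical point and $g_0=g_1$. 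This is precisely where non-positive curvature suffices in place of strict negativity: the fixed boundary plays the role that a non-trivially stretched edge plays in the closed case. Hence $g$ is the unique simplicial harmonic map in the homotopy class of $f$, and in particular it minimizes $E_S$ there.
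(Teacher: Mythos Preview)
Your approach is exactly the paper's: the authors give no separate proof of this theorem, but simply remark that ``the arguments of this section work perfectly well if $F_{1}$ and $F_{2}$ have non-empty boundary, and in this case, one need not insist that the metrics be strictly negatively curved to obtain uniqueness,'' and then state the theorem. Your write-up is therefore more detailed than what the paper offers, and your identification of the fixed boundary as the substitute for strict negativity in the uniqueness argument is precisely the point the paper is alluding to.

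There is one slip in your added detail. Your claim that a collapsed subcomplex $K$ meets $\partial F_{1}$ in at most one vertex is correct, for the reason you give: $f_{0}(K)$ is a single point and $f\mid\partial F_{1}$ is injective. But the same justification does \emph{not} apply to a $\lambda$--degenerate subcomplex $L$: vertices of $L$ are only required to map into the geodesic $\lambda$, not to a single point, so two boundary vertices of $F_{1}$ lying in $L$ need not have the same image. In particular, if a component of $\partial F_{2}$ happens to be totally geodesic (which local convexity allows), $\lambda$ could coincide with it and $L$ could contain several boundary edges of $\tau$. You therefore have not established that the ``movable'' vertex $v$ chosen on $C\subset\partial U$ in the proof of Lemma~\ref{f0isahomeomorphism} can be taken to be an interior vertex of $F_{1}$. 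This gap is not addressed by the paper either (it simply asserts the arguments go through), and it can be closed---for instance by analysing separately the case where $C$ meets $\partial F_{1}$ and using that $f_{0}\mid\partial F_{1}$ is already an embedding---but your stated reason is not the right one.
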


Now let $F$ be a compact surface with a Riemannian metric $h$, and let $\tau$
denote a good triangulation of $F$. We will say that a triangulation of $F$ is
\textit{straight} if each edge is a geodesic arc. We will be interested in the
number and homotopy type of the components of the space of straight
triangulations of $F$ of a fixed combinatorial type. Note that Theorems
\ref{harmonicsurfacehomeoexists} and
\ref{harmonicsurfacehomeosexistrelboundary} tell us that for any good
triangulation $\tau$ of $F$, the space of straight triangulations of $F$
modelled on $\tau$ is non-empty. We will show that, with certain curvature
restrictions, there is a natural bijection between the components of this
space and the group of isotopy classes of homeomorphisms of $F$. When $F$ is
the $2$--disk with a flat metric, Bloch, Connelly and Henderson
\cite{BlochConnellyHenderson} proved this, and they proved, in addition, that
the components of this space are homeomorphic to some Euclidean space. In
related work, Awartani and Henderson \cite{AwartaniHenderson} considered the
problem of showing that when $F$ is the $2$--sphere, the analogous space has
the homotopy type of $O(3)$. And Bloch \cite{Bloch} showed that the space of
embeddings with geodesic edges and convex image of a triangulated $2$--disk
into the plane has the homotopy type of $O(2)$.

If we have a simplicial metric $l$ on $\tau$, and if $g$ is any simplicial
homeomorphism from $(F,\tau,l)$ to $(F,h)$, then $g(\tau)$ is a straight
triangulation of $F$. Conversely a straight triangulation of $(F,h)$ by $\tau$
determines a unique simplicial homeomorphism from $(F,\tau,l)$ to $(F,h)$.
Thus, after fixing $\tau$ and its simplicial metric, we can identify the space
of straight triangulations of $F$ by $\tau$ with the space of simplicial
homeomorphisms of $F$. Note that any triangulation $\tau$ of $F$ admits a
simplicial metric $l$ by specifying that $l$ takes the value $1$ on every edge
of $\tau$.

Having fixed a good triangulation $\tau$ of $F$ and its simplicial metric $l$,
let $H(\tau)$ denote the space of simplicial homeomorphisms from $(F,\tau,l)$
to $(F,h)$. This is a subset of the space $Homeo(F)$ of all homeomorphisms
from $F$ to $F$. The components of $Homeo(F)$ naturally correspond to the
isotopy classes of homeomorphisms of $F$.

\begin{theorem}
\label{straighttriangulationsarestraightisotopic} Let $F$ be a compact surface
with a Riemannian metric $h$ of non-positive curvature such that $\partial F$
is locally convex. If $F$ is closed we suppose that $h$ has negative
curvature. Let $\tau$ denote a good triangulation of $F$, with simplicial
metric $l$. Using the above notation, each component of $Homeo(F)$ contains
exactly one component of $H(\tau)$.
\end{theorem}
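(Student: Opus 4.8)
The plan is to set up a map $\Theta$ from the set of components of $Homeo(F)$ to the set of components of $H(\tau)$, and to show it is a well-defined bijection. First I would observe that by Theorem~\ref{harmonicsurfacehomeoexists} (in the closed case) and Theorem~\ref{harmonicsurfacehomeosexistrelboundary} (in the bounded case), every homeomorphism $f\colon F\to F$ is isotopic (rel $\partial F$ when $\partial F\neq\emptyset$) to a simplicial harmonic homeomorphism $g\in H(\tau)$, and this $g$ is the \emph{unique} simplicial harmonic map in the homotopy class of $f$. So associate to the component of $f$ in $Homeo(F)$ the component of $g$ in $H(\tau)$. To see this is well defined I must check that the component of $g$ in $H(\tau)$ depends only on the isotopy class of $f$: if $f'$ is isotopic to $f$ then $f'$ and $f$ have the same harmonic representative $g$ (uniqueness in the homotopy class), so the same component is assigned. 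Hence $\Theta$ is a well-defined map, and it is surjective essentially by construction, since each $g\in H(\tau)$ is itself a homeomorphism and is its own harmonic representative, so the component of $g$ in $Homeo(F)$ maps to the component of $g$ in $H(\tau)$.

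Next I would prove \textbf{injectivity}, i.e.\ that two homeomorphisms lying in distinct components of $Homeo(F)$ cannot have simplicial harmonic representatives in the same component of $H(\tau)$. Equivalently: if $g_0,g_1\in H(\tau)$ lie in the same component of $H(\tau)$, then they lie in the same component of $Homeo(F)$, i.e.\ are isotopic. This is immediate since $H(\tau)\subset Homeo(F)$ and a path in $H(\tau)$ joining $g_0$ to $g_1$ is in particular a path in $Homeo(F)$, hence an isotopy (components of $Homeo(F)$ being exactly the isotopy classes). Combined with well-definedness, this shows $\Theta$ is injective on components. So far this shows each component of $Homeo(F)$ maps to a component of $H(\tau)$ and the map is a bijection on the level of \emph{which} component — but the real content of the statement is that each component of $Homeo(F)$ contains \emph{exactly one} component of $H(\tau)$, i.e.\ that $H(\tau)$ meets each isotopy class in a \emph{connected} set.

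The heart of the proof is therefore: \textbf{if $g_0,g_1\in H(\tau)$ are isotopic homeomorphisms, then they lie in the same component of $H(\tau)$.} Here I would use the harmonic map machinery. Since $g_0$ and $g_1$ are isotopic, they are homotopic, so by uniqueness (Theorem~\ref{harmonicsurfacehomeoexists}, resp.\ \ref{harmonicsurfacehomeosexistrelboundary}, using the negative-curvature hypothesis when $F$ is closed) there is only one simplicial harmonic map in that homotopy class. But both $g_0$ and $g_1$ are simplicial harmonic (being simplicial homeomorphisms that minimize energy among homeomorphisms, hence critical points of $E_S$ under all variations — the argument from those theorems that small variations stay homeomorphisms). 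Therefore $g_0=g_1$. Thus $H(\tau)$ meets each isotopy class in at most a point, and by the existence half it meets each isotopy class in exactly one point, which is trivially connected. In the non-closed, merely non-positively curved case, one uses the rel-$\partial F$ uniqueness in Theorem~\ref{harmonicsurfacehomeosexistrelboundary} in the same way. The main obstacle I anticipate is justifying that an arbitrary simplicial homeomorphism in $H(\tau)$ — not a priori an energy-minimizer among \emph{all} maps — is nonetheless the harmonic representative of its homotopy class: one must argue that minimizing $E_S$ over homeomorphisms isotopic to it produces a critical point for \emph{all} variations (since homeomorphisms form an open set, any nearby map is still a homeomorphism, so a local minimum over homeomorphisms is a local minimum over all maps), and then invoke uniqueness of simplicial harmonic maps in negatively curved (or non-positively curved, rel boundary) targets. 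Care is also needed when the combinatorial type admits homeomorphisms permuting the triangulation nontrivially, but this is absorbed by working with the isotopy class and the fixed marking of $\tau$.
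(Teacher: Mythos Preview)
Your argument has a genuine gap at the crucial step. You claim that any $g_0,g_1\in H(\tau)$ in the same isotopy class must coincide, because ``both $g_0$ and $g_1$ are simplicial harmonic (being simplicial homeomorphisms that minimize energy among homeomorphisms\ldots).'' But $H(\tau)$ is the space of \emph{simplicial} homeomorphisms, not simplicial \emph{harmonic} homeomorphisms. A simplicial homeomorphism merely sends edges and cone lines to geodesics; it need not be a critical point of $E_S$, and there is no reason an arbitrary element of $H(\tau)$ minimizes energy in its isotopy class. Indeed, a simplicial map is determined by the images of the vertices (together with the homotopy classes of the edges), so $H(\tau)$ is locally an open subset of a product of copies of $F$ and has positive dimension. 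The paper even remarks that in the disk case the components of $H(\tau)$ are homeomorphic to Euclidean spaces, and the example in Figure~\ref{nonembedded} exhibits a nontrivial deformation of a simplicial homeomorphism. So $H(\tau)$ cannot meet each isotopy class in a single point, and your uniqueness argument collapses.

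The paper's proof of connectedness is substantially more delicate. Fixing an isotopy class, let $g$ be the unique energy-minimizing simplicial harmonic homeomorphism and let $H$ be any component of $H(\tau)$ in that class. One would like to use the gradient flow of $E_S$ on $Map(f)$ to retract $H$ onto $g$, but the gradient flow does \emph{not} preserve the property of being a homeomorphism (again, Figure~\ref{nonembedded}). Instead the paper argues indirectly: take the closure $\overline{H}$ in $Map(f)$ and let $g_0$ be a point of $\overline{H}-H$ of least energy (such a minimum exists by the compactness established in Proposition~\ref{uniqueharmonicmapsexist}). Since $g_0$ is a non-embedded limit of simplicial homeomorphisms, the geometric arguments of Lemmas~\ref{manyedgescollapsed}--\ref{f0isahomeomorphism} produce a nearby simplicial homeomorphism $f_n'\in H$ with $E_S(f_n')<E_S(g_0)$. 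The gradient flow starting at $f_n'$ then has energy below that of every point of $\overline{H}-H$, hence stays in $H$ for all time and converges to $g$. Thus every component $H$ contains $g$, so there is only one. Your proposal bypasses this entire mechanism by an unjustified identification of ``simplicial'' with ``harmonic.''
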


\begin{proof}
Theorems \ref{harmonicsurfacehomeoexists} and
\ref{harmonicsurfacehomeosexistrelboundary} imply that any homeomorphism
$f:(F,\tau,l)\rightarrow(F,h)$ is isotopic rel $\partial F$ to a unique
simplicial harmonic homeomorphism $g:(F,\tau,l)\rightarrow(F,h)$ that
minimizes $E_{S}$ among all homeomorphisms isotopic to $f$. In particular,
each component of $Homeo(F)$ contains a point of $H(\tau)$. Note that the fact
that $H(\tau)$ is non-empty is already nontrivial to prove.

Next we concentrate on homeomorphisms isotopic to $f$. Let $H$ denote a
component of the space of simplicial homeomorphisms from $(F,\tau,l)$ to
$(F,h)$ which are isotopic to $f$. We will show that $H$ contains the energy
minimizing simplicial harmonic homeomorphism $g$. As this holds for all such
components, it will follow that the space of simplicial homeomorphisms from
$(F,\tau,l)$ to $(F,h)$ which are isotopic to $f$ must be connected, which
will prove the first part of the theorem.

Now $H$ is contained in the space, $Map(f)$, of all simplicial maps from
$(F,\tau,l)$ to $(F,h)$ which are homotopic to $f$. The gradient flow of the
energy functional on $Map(f)$ yields a deformation retraction of $Map(f)$ to
the unique energy minimizer $g$, which we know to be a homeomorphism. It would
be extremely convenient if this flow induced a deformation retraction of $H$
to this same energy minimizer. One would need to show that if one starts with
a simplicial homeomorphism from $(F,\tau,l)$ to $(F,h)$, then the result of
following the gradient flow remains a homeomorphism for all time.
Unfortunately this is not the case. See the end of this section for a
discussion and example. This means that we are unable to show that the
components of $H(\tau)$ are contractible, although we believe this to be the
case. Instead we consider the closure $\overline{H}$ of $H$ in $Map(f)$. As
$H$ is open in $Map(f)$, it follows that $\overline{H}-H$ is closed in
$Map(f)$. This implies that the restriction of the energy functional to
$\overline{H}-H$ attains a minimum. To see this, recall that the proof of
Proposition \ref{uniqueharmonicmapsexist} showed that given any constant $K$,
the subspace of $Map(f)$ consisting of maps with energy $\leq K$ is compact.
Thus the same holds for the intersection of this subspace with $\overline
{H}-H$. Now let $g_{0}$ be a point of $\overline{H}-H$ of least energy. Thus
$g_{0}$ is not a homeomorphism, and is a limit of simplicial homeomorphisms
$f_{n}$ in $H$. We make the following claim.

\begin{claim}
There is $n$ such that we can isotope $f_{n}$ to a simplicial homeomorphism
$f_{n}^{\prime}$ whose energy is strictly less than that of $g_{0}$.
\end{claim}

Of course, such $f_{n}^{\prime}$ also lies in $H$. Assuming this claim,
consider the gradient flow starting at $f_{n}^{\prime}$. Any map in this flow
has energy less than that of $f_{n}^{\prime}$, and so less than that of
$g_{0}$. Hence no map in this flow can lie in $\overline{H}-H$. Hence the flow
stays in $H$ and so determines an isotopy of $f_{n}^{\prime}$ to the unique
energy minimizer $g$. In particular it follows that $H$ contains $g$, which
completes the proof of the theorem.

It remains to prove the above claim. We would like to prove this in much the
same way as we proved Lemma \ref{f0isahomeomorphism}. Specifically we want to
use Lemmas \ref{manyedgescollapsed}, \ref{noedgeincidenttodLcollapses},
\ref{Chas0or2criticalpoints}, \ref{Uisdiscorannulus} and
\ref{f0isahomeomorphism}. As stated, these lemmas do not apply to our
situation, as $g_{0}$ is not an energy minimizer for all homeomorphisms. But
we can apply the arguments in these lemmas as we now describe.

Recall that all these lemmas consider a map $f_{0}:\tau^{1}\rightarrow F_{2}$
which is the limit of the restriction to $\tau^{1}$ of a sequence of
homeomorphisms from $F_{1}$ to $F_{2}$, each of which is isotopic to $f$, and
whose simplicial energies approach $I(f)$. Lemma
\ref{edgesaremappedtogeodesics} shows that any such map sends each edge of
$\tau$ to a point or a geodesic segment. In the present situation, $g_{0}$ is
a limit of simplicial homeomorphisms, so it is immediate that it sends each
edge of $\tau$ to a point or a geodesic segment, but it need not have the
minimal energy $I(f)$.

Lemma \ref{manyedgescollapsed} used the least energy assumption on $f_{0}$ to
show that if $f_{0}$ collapsed some edge $e$ to a point, then it must map the
star of some complex $K$ which contains $e$ into some geodesic $\lambda$. Now
suppose that $g_{0}$ collapses some edge $e$ to a point. The arguments in the
proof of Lemma \ref{manyedgescollapsed} show that either we get the same
conclusion for $g_{0}$, or we can isotope some $f_{n}$ to a simplicial
homeomorphism $f_{n}^{\prime}$ whose energy is strictly less than that of
$g_{0}$. An alternative way of putting this is to say that either the claim
holds or the result of Lemma \ref{manyedgescollapsed} holds for $g_{0}$.

Lemmas \ref{noedgeincidenttodLcollapses}, \ref{Chas0or2criticalpoints} and
\ref{Uisdiscorannulus} depend on Lemma \ref{manyedgescollapsed} but do not
otherwise use the energy minimizing assumption on $f_{0}$. Thus again we have
that either the claim holds or their results also hold for $g_{0}$.

Finally the arguments in the proof of Lemma \ref{f0isahomeomorphism} use the
conclusions of Lemmas \ref{noedgeincidenttodLcollapses},
\ref{Chas0or2criticalpoints} and \ref{Uisdiscorannulus} to show that if
$f_{0}$ is not a homeomorphism we can isotope $f_{n}$ to a simplicial
homeomorphism $f_{n}^{\prime}$ whose energy is strictly less than that of
$f_{0}$. As $g_{0}$ is not a homeomorphism, when we apply these arguments to
$g_{0}$, it follows that the claim must hold. This completes the proof of the
claim, and hence of Theorem \ref{straighttriangulationsarestraightisotopic}.
\end{proof}

We end this section by discussing the fact that the gradient flow need not
preserve the property of being a homeomorphism. Recall from section
\ref{simplicialenergy} the formula for the simplicial energy of a map. If we
consider varying a single vertex $P$, only those edges incident to $P$ can
contribute to any change in the energy. To each such edge $e$ we associate a
vector $v_{e}$ in the tangent space at $P$ so that $v_{e}$ is tangent to $e$,
has magnitude equal to the length of $e$, and points away from $P$. Suppose
that every edge of $\tau$ has length $1$ in our simplicial metric. Then a
simple calculation shows that under the gradient flow, the flow vector of $P$
is simply the sum of the vectors $v_{e}$. Thus $P$ is ``pulled'' much more by
long edges than by short ones. If a configuration like the one in
Figure~\ref{nonembedded} is part of a triangulation of a surface, it will
quickly cease to be embedded under the gradient flow.

\begin{figure}[ptbh]
\centering
\includegraphics[width=1.5in]{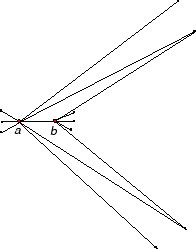} \caption{A configuration which
ceases to be embedded under the gradient flow. The vertex $a$ moves rightward
faster than vertex $b$.}%
\label{nonembedded}%
\end{figure}

\section{Higher dimensions}

\label{higherdimensions}

In the previous sections we considered maps of a triangulated closed surface
$F$ into a non-positively curved closed manifold $M$ whose dimension was
unrestricted. In this section we observe that all the preceding work can be
generalized to maps of a triangulated closed $k$--manifold $F$ into a
non-positively curved manifold $M$, for any $k\geq2$. We do this by
restricting attention to the $2$--skeleton $\tau^{(2)}$ of the triangulation
$\tau$ of $F$. This seems reasonable as the homotopy class of a map of $F$
into $M$ is determined by its restriction to $\tau^{(2)}$, as $M$ is
aspherical. In the smooth setting, much work has been done in this setting.
See \cite{EellsLemaire2}\cite{EellsLemaire3}.

For simplicity we discuss only the case when $k=3$. As in the $2$--dimensional
case, we specify a triangulation $\tau$ of the closed $3$--manifold $F$, and a
map $l$ that assigns to each edge $e_{i}$, $1\leq i\leq r$, of $\tau$ a length
$l_{i}=l(e_{i})>0$, with the lengths $l_{i}$ realizable by a Euclidean
tetrahedron for each tetrahedron of $\tau$. We call such an assignment a
\emph{simplicial metric} on $F$, and denote it by $(F,\tau,l)$, or just $l$
when the context is clear. Note that we allow the possibility that for some
triangles one of the triangle inequalities is an equality. As before we order
the vertices of $\tau$ using a $\Delta$-complex structure.

Define the \textit{simplicial $2$-volume} of $(F,\tau)$ to be the sum
$V_{2}=\Sigma l_{i}l_{j}$, where the sum is taken over all corners of all the
$2$--simplices of $\tau$.

Given a map $f:F\rightarrow M$, let $L_{i}$ denote the length of the
restriction of $f$ to the edge $e_{i}$, and as before define the $i$--th
stretch factor to be $\sigma_{i}=L_{i}/l_{i}$. The \emph{simplicial $2$-volume
of $f$} is defined to be $V_{2}(f)=\Sigma L_{i}L_{j}$, and the
\emph{simplicial $2$-energy of the map $f$} to be $E_{2}(f)=\frac{1}{2}%
\Sigma(\sigma_{i}^{2}+\sigma_{j}^{2})l_{i}l_{j}$, where the sum is taken over
all corners of all the $2$--simplices of $\tau$.

Next we need to define when a map from $F$ to $M$ is simplicial. As before we
insist that each edge of the triangulation $\tau$ is mapped to $M$ as a
geodesic arc. In order to determine $f$ canonically on the remainder of $F$,
we use the ordering of the vertices of $\tau$. For each triangle of $\tau$
this determines a cone structure, with cone point the minimal vertex of the
triangle, and as before this determines a canonical extension of $f$ to the
$2$--skeleton of $F$. We give each tetrahedron $T$ of $\tau$ the cone
structure from its minimal vertex $v$. Note that all the faces of $T$ which
meet $v$ also have cone structure with $v$ as the cone point. Thus the
canonical extension of $f$ to the tetrahedra of $\tau$ yields a well defined
simplicial map to $M$.

As in Lemma \ref{E>A}, for any simplicial metric on $F$, we have the
inequality $E_{2}(f)\geq V_{2}(f)$ with equality if and only if all the
stretch factors are equal. We say that $f$ is \textit{$2$-simplicial harmonic}
if it is a critical point of the $2$-energy functional. As in Proposition
\ref{uniqueharmonicmapsexist}, if $M$ is closed, the energy functional has a
minimum among all maps homotopic to $f$, and we can choose any minimizing map
to be simplicial. Further if $M$ is negatively curved and $f$ is nontrivial,
i.e. $f$ cannot be homotoped to have image contained in a closed geodesic,
then this minimum energy simplicial map $g$ is unique.

As in Section~\ref{families}, the fact that any map $f:F\rightarrow M$ can be
homotoped to a unique simplicial map without moving the vertices allows one to
construct families of harmonic maps with uniform area bounds.

In the preceding paragraphs, we prefixed $2$ to our definitions, because there
seem to be some natural alternative definitions. For example, we could define
the \textit{simplicial $3$-volume} of $(F,\tau)$ to be the sum $V_{3}=\Sigma
l_{i}l_{j}l_{k}$, where the sum is taken over all corners of all the
$3$--simplices of $\tau$. Then we would define the \emph{simplicial $3$-volume
of $f$} to be $V_{3}(f)=\Sigma L_{i}L_{j}L_{k}$, and the \emph{simplicial
$3$-energy of $f$} to be $E_{3}(f)=\frac{1}{2}\Sigma(\sigma_{i}^{2}+\sigma
_{j}^{2}+\sigma_{k}^{2})l_{i}l_{j}l_{k}$. In general, if the dimension of $F$
equals $d$, then one could similarly define the simplicial $k$--volume of $F$,
and the simplicial $k$--energy of $f$, for any $k$ such that $2\leq k\leq d$.
However one cannot expect there to be any connection between simplicial
$k$--volume and simplicial $k$--energy, except when $k=2$.

\end{document}